\documentclass{article}
\usepackage[utf8]{inputenc}
\usepackage{eufrak}
\usepackage[breaklinks=true, pdftex, 
pdfborder={0 0 0}, colorlinks=true, linkcolor=blue, citecolor=blue, urlcolor=blue, 
bookmarks=false, pdfpagelabels=false]{hyperref}
\usepackage{fullpage}
\usepackage{times}
\usepackage{ifthen}   
\usepackage{graphicx,graphics}
\usepackage{setspace}
\usepackage{amssymb,amsmath,amsthm}
\usepackage{dsfont}
\usepackage{color}
\usepackage{wrapfig}
\usepackage{multirow}
\usepackage{multicol}
\usepackage{pdfpages}
\usepackage{longtable} 	
\usepackage{booktabs}
\usepackage{datetime}
\usepackage{xspace}
\usepackage{paralist} 
\usepackage{enumitem} 
\usepackage[it,small,belowskip=-15pt]{caption}
\usepackage[top=1in,bottom=1in,left=0.95in,right=0.95in]{geometry} 
\usepackage[T1]{fontenc}
\usepackage{doi}
\usepackage[numbers,compress,sort]{natbib} 
\usepackage{algorithmic}
\usepackage[noresetcount,vlined,boxed]{algorithm2e} 
\let\oldnl\nl
\newcommand{\nonl}{\renewcommand{\nl}{\let\nl\oldnl}}
\usepackage{caption}
\usepackage{float}   
\usepackage{subcaption}
\usepackage[framemethod=tikz]{mdframed}

\usepackage{comment}
\usepackage{cleveref} 
\usepackage{fontawesome5}

\usepackage{listings}
\usepackage{upgreek}
\usepackage[bbgreekl]{mathbbol}
\DeclareSymbolFontAlphabet{\mathbb}{AMSb}
\DeclareSymbolFontAlphabet{\mathbbl}{bbold}

\usepackage{lineno}  

\definecolor{codegreen}{rgb}{0,0.6,0}
\definecolor{codegray}{rgb}{0.5,0.5,0.5}
\definecolor{codepurple}{rgb}{0.58,0,0.82}
\definecolor{backcolour}{rgb}{0.95,0.95,0.92}
\definecolor{ForestGreen}{RGB}{34,139,34}

\usepackage{xcolor}
\usepackage{listings}

\usepackage{xparse}
\usepackage{adjustbox}
\usepackage{tikz}
\usetikzlibrary{fit, arrows, calc, positioning}
\usetikzlibrary{arrows, automata}
\usetikzlibrary{positioning}
\usetikzlibrary{shadows}
\tikzset{box/.style={draw,rectangle,rounded  corners=0pt, fill=blue!15, minimum  width=3cm, minimum  height=2cm}}
\tikzset{loz/.style={draw,diamond, aspect=1.5, fill=blue!15}}

\tikzstyle{rnd}=[circle,fill=blue!25,minimum  width=5em]
\tikzset{
	quote/.style={{|[width=1mm]}-{|[width=1mm]}}
}
\usepackage{stackengine}   
\newcommand{\ubar}[1]{\stackunder[1.0pt]{$#1$}{\rule{.8ex}{.075ex}}}    

\NewDocumentCommand{\codeword}{v}{%
	\texttt{\textcolor{blue}{#1}}%
}

\lstset{language=C,keywordstyle={\bfseries \color{blue}}}

\lstdefinestyle{mystyle}{
	backgroundcolor=\color{backcolour},   
	commentstyle=\color{codegreen},
	keywordstyle=\color{magenta},
	numberstyle=\tiny\color{codegray},
	stringstyle=\color{codepurple},
	basicstyle=\ttfamily\footnotesize,
	breakatwhitespace=false,         
	breaklines=true,                 
	captionpos=b,                    
	keepspaces=true,                 
	numbers=left,                    
	numbersep=5pt,                  
	showspaces=false,                
	showstringspaces=false,
	showtabs=false,                  
	tabsize=2
}

\lstset{style=mystyle}

\usepackage{titlesec}

\usepackage{mdframed}  
\usepackage{colortbl}
\usepackage{mathrsfs} 
\definecolor{highlight}{HTML}{DFEFF9}
\mdfdefinestyle{highlightbox}{leftmargin=0pt,rightmargin=0pt,%
	innerleftmargin=3pt,innerrightmargin=2pt,%
	innertopmargin=4pt,innerbottommargin=4pt,%
	backgroundcolor=highlight,linecolor=white}
\definecolor{algm}{HTML}{D4EFDF} 
\mdfdefinestyle{highlightalg}{leftmargin=0pt,rightmargin=0pt,%
	innerleftmargin=3pt,innerrightmargin=2pt,%
	innertopmargin=4pt,innerbottommargin=4pt,%
	backgroundcolor=algm,linecolor=white}
\definecolor{soft}{HTML}{F2D7D5} 
\mdfdefinestyle{highlightstruct}{leftmargin=0pt,rightmargin=0pt,%
	innerleftmargin=3pt,innerrightmargin=2pt,%
	innertopmargin=4pt,innerbottommargin=4pt,%
	backgroundcolor=soft,linecolor=white}

\usepackage{soul}

\makeatletter
\def\namedlabel#1#2{\begingroup
	#2%
	\def\@currentlabel{#2}%
	\phantomsection\label{#1}\endgroup
}
\makeatother



\newcommand{\R}{\Reals} 

\newcommand{\eps}{\epsilon} 			

\DeclareMathOperator*{\argmax}{arg\,max}




\newcommand{\E}[2][]{\operatorname{\mathbb{E}}_{#1}\left[ #2\right]} 
 


\DeclareMathOperator\sign{sign}

\usepackage{bm} 


\newcommand{\ub}{\bm{u}}
\newcommand{\Ub}{\bm{U}}

\newcommand{\vb}{\bm{v}}
\newcommand{\wb}{\bm{w}}

\newcommand{\yb}{\bm{y}}

\newcommand{\bcX}{\bm{\mathcal{X}}}

 


\DeclareGraphicsRule{.pdftex}{pdf}{.pdftex}{}

\newtheorem{definition}{Definition}[section]
\newtheorem{theorem}{Theorem}[section]
\newtheorem{lemma}{Lemma}[section]
\newtheorem{proposition}{Proposition}[section]
\newtheorem{assumption}{Assumption}[section]

\newtheorem{remark}{Remark}[section]


\usepackage{tikz}
\usetikzlibrary{positioning}
\usetikzlibrary{calc}
\usetikzlibrary{shapes.geometric}
\usetikzlibrary{topaths}
\usetikzlibrary{external}
\usetikzlibrary{fadings,decorations.pathreplacing} 

\usepackage{pgfplots}
\usetikzlibrary{shapes,decorations.pathmorphing,patterns}
\pgfplotsset{compat=newest}
\usepgfplotslibrary{fillbetween}

\graphicspath{{figures/}}

\usepackage{hyperref}
\hypersetup{
unicode = false,
pdftoolbar = true,
pdfmenubar = true,
pdffitwindow = true,
pdftitle = {Direct search for stochastic optimization in random subspaces},
pdfauthor = {Dzahini},
pdfsubject = {RandomMatrix},
pdfnewwindow = true,
pdfkeywords = {Random, Matrix},
colorlinks = true,
linkcolor = blue,
citecolor = blue,
filecolor = black,
urlcolor = blue,
breaklinks = true
}

\newcommand{\Z}{\mathbb{Z}}
\newcommand{\Esp}{\mathbb{E}}

\newcommand{\snOne}{\mathbb{S}^{n-1}}
\newcommand{\spOne}{\mathbb{S}^{p-1}}

\newcommand{\N}{\mathbb{N}}

\newcommand{\Pk}{\mathcal{P}_k}

\newcommand{\F}{\mathcal{F}}

\newcommand{\n}{\mathcal{N}}

\newcommand{\pr}{\mathbb{P}}

\newcommand{\dmax}{\delta_{\max}}

\newcommand{\ijk}{\mathds{1}_{J_k}}
\newcommand{\isk}{\mathds{1}_{\mathfrak{S}_k}}
\newcommand{\iskc}{\mathds{1}_{\bar{\mathfrak{S}}_k}}

\newcommand{\ijkc}{\mathds{1}_{\bar{J}_k}}

\newcommand{\abs}[1]{\left\lvert#1\right\rvert}
\newcommand{\intbracket}[1]{\left[\!\left[#1\right]\!\right]}
\newcommand{\accolade}[1]{\left\lbrace#1\right\rbrace}
\newcommand{\accoladekinN}[1]{{\left\lbrace#1\right\rbrace}_{k\in\mathbb{N}}}

\newcommand{\prob}[1]{\mathbb{P}\left(#1\right)}
\newcommand{\var}[1]{\mathbb{V}\left(#1\right)}

\newcommand{\norme}[1]{\left\lVert#1\right\rVert}
\newcommand{\scal}[2]{\left\langle#1,#2\right\rangle}

\newcommand{\normii}[1]{{\left\lVert#1\right\rVert}_{2}}

\newcommand{\rn}{\mathbb{R}^n}

\newcommand{\rp}{\mathbb{R}^p}
\newcommand{\rnn}{\mathbb{R}^{n\times n}}

\newcommand{\rnp}{\mathbb{R}^{n\times p}}

\newcommand{\rpn}{\mathbb{R}^{p\times n}}
\newcommand{\rpp}{\mathbb{R}^{p\times p}}
\newcommand{\rpm}{\mathbb{R}^{p\times m}}

\newcommand{\s}{\bm{s}}

\newcommand{\vi}{\bm{v}}

\newcommand{\Qk}{\bm{Q}_k} 
\newcommand{\QkRandom}{\bm{{\stackunder[0.6pt]{$Q$}{\rule{1.3ex}{.075ex}}}}_k}

\newcommand{\fok}{f_k({\bm{0}})}
\newcommand{\Fok}{{\stackunder[0.6pt]{$f$}{\rule{.8ex}{.075ex}}}_k({\bm{0}})}

\newcommand{\dkun}{{\delta_{k+1}}}

\newcommand{\iak}{\mathds{1}_{A_k}}
\newcommand{\Q}{\bm{Q}}

\newcommand{\Xk}{\bm{{\stackunder[1.0pt]{$x$}{\rule{1.0ex}{.075ex}}}}_k}
\newcommand{\X}{{\stackunder[1.0pt]{$\bm{x}$}{\rule{1.0ex}{.075ex}}}}
\newcommand{\xk}{\bm{x}_k}
\newcommand{\xkun}{\bm{x}_{k+1}}
\newcommand{\Xkun}{\bm{{\stackunder[1.0pt]{$x$}{\rule{1.0ex}{.075ex}}}}_{k+1}}
\newcommand{\x}{\bm{x}}

\newcommand{\di}{\bm{d}}

\newcommand{\falgebra}{\mathcal{F}_{k-1}}

\newcommand{\ef}{\varepsilon_f}
\newcommand{\Dk}{{\stackunder[1.0pt]{$\delta$}{\rule{.8ex}{.075ex}}}_k}
\newcommand{\Dkun}{{\stackunder[1.0pt]{$\delta$}{\rule{.8ex}{.075ex}}}_{k+1}}
\newcommand{\Dkmun}{{\stackunder[1.0pt]{$\delta$}{\rule{.8ex}{.075ex}}}_{k-1}}
\newcommand{\dk}{\delta_k}

\newcommand{\Phik}{{\stackunder[1.0pt]{$\phi$}{\rule{0.8ex}{.075ex}}}_k}
\newcommand{\Phikun}{{\stackunder[1.0pt]{$\phi$}{\rule{0.8ex}{.075ex}}}_{k+1}}
\newcommand{\Tepsilon}{{{\stackunder[1.0pt]{\scalebox{0.7}{$T$}}{\rule{0.8ex}{.075ex}}}_\epsilon}}
\newcommand{\Wtildek}{{{\stackunder[1.0pt]{$w$}{\rule{0.8ex}{.075ex}}}_k}}
\newcommand{\Wtildekun}{{{\stackunder[1.0pt]{$w$}{\rule{0.8ex}{.075ex}}}_{k+1}}}
\newcommand{\PhiZero}{{\stackunder[1.0pt]{$\phi$}{\rule{0.8ex}{.075ex}}}_0}

\newcommand{\Dzero}{{\stackunder[1.0pt]{$\delta$}{\rule{.8ex}{.075ex}}}_0}

\newcommand{\WtildeZero}{{{\stackunder[1.0pt]{$w$}{\rule{0.8ex}{.075ex}}}_0}}

\newcommand{\depsilon}{\delta_{\epsilon}}

\newcommand{\Wtildekmun}{{{\stackunder[1.0pt]{$w$}{\rule{0.8ex}{.075ex}}}_{k-1}}}
\newcommand{\Phikmun}{{\stackunder[1.0pt]{$\phi$}{\rule{0.8ex}{.075ex}}}_{k-1}}

\newcommand{\Amatrix}{\mathbf{A}}

\newcommand{\Qrandom}{\bm{{\stackunder[0.6pt]{$Q$}{\rule{1.3ex}{.075ex}}}}}
\newcommand{\QrandomTranspose}{\bm{{\stackunder[0.6pt]{$Q$}{\rule{1.3ex}{.075ex}}}}^{\top}}
\newcommand{\Qmax}{Q_{\max}}
\newcommand{\pollD}{\mathbb{D}^p}
\newcommand{\pollDrandom}{{\stackunder[0.6pt]{$\mathbb{D}$}{\rule{1.2ex}{.075ex}}}^p}

\newcommand{\Urandom}{\bm{{\stackunder[0.7pt]{$U$}{\rule{1.1ex}{.075ex}}}}}
\newcommand{\Drandom}{{\stackunder[0.6pt]{$\bm{D}$}{\rule{1.2ex}{.075ex}}}^p} 
\newcommand{\equald}{\overset{d}{=}}
\newcommand{\qrandom}{\scalebox{0.63}{$\Qrandom$}}
\newcommand{\drandom}{\scalebox{0.6}{$\Drandom$}}
\newcommand{\Mrandom}{\bm{{\stackunder[0.7pt]{$M$}{\rule{1.8ex}{.075ex}}}}}
\newcommand{\Pkfeas}{\mathcal{P}_k^{\mathcal{X}}}
\newcommand{\bl}{\color{black}}
\newcommand{\bll}{\color{black}}
\newcommand{\bbl}{\color{black}}
\newcommand{\bbbl}{\color{black}}
\newcommand{\uk}{\bm{u}_k}
\newcommand{\ukRandom}{\bm{{\stackunder[1.0pt]{$\ub$}{\rule{1.0ex}{.075ex}}}}_k}
\newcommand{\fuk}{f_k(\ub)}
\newcommand{\Fuk}{{\stackunder[0.6pt]{$f$}{\rule{.8ex}{.075ex}}}_k(\ubar{\ub})}
\newcommand{\Urandomtilde}{{\stackunder[0.7pt]{$\tilde{\bm{U}}$}{\rule{1.1ex}{.075ex}}}}

\newcommand{\BrandomSet}{{\stackunder[0.6pt]{$\mathbb{B}$}{\rule{1.1ex}{.075ex}}}}
\newcommand{\Brandom}{{\stackunder[0.6pt]{$\bm{B}$}{\rule{1.1ex}{.075ex}}}}
\newcommand{\urandom}{\bm{{\stackunder[1.0pt]{$u$}{\rule{1.0ex}{.075ex}}}}}
\newcommand{\urandomtilde}{{\stackunder[1.0pt]{$\tilde{\bm{u}}$}{\rule{1.0ex}{.075ex}}}}
\newcommand{\diRandom}{\bm{{\stackunder[1.0pt]{$d$}{\rule{1.0ex}{.075ex}}}}^p}
\newcommand{\dikRandom}{\bm{{\stackunder[1.0pt]{$\di$}{\rule{1.0ex}{.075ex}}}}_k^p}
\newcommand{\diRandomtilde}{{\stackunder[1.0pt]{$\tilde{\di}$}{\rule{1.0ex}{.075ex}}}}

\newcommand{\vrandom}{\bm{{\stackunder[1.0pt]{$v$}{\rule{0.7ex}{.075ex}}}}}

\newcommand{\fqdalgebra}{\mathcal{F}^{Q\cdot D}_{k-1}}
\newcommand{\accoladekinK}[1]{{\left\lbrace#1\right\rbrace}_{k\in K}}

\newcommand{\accoladekinL}[1]{{\left\lbrace#1\right\rbrace}_{k\in L}}

\newcommand{\fdk}{f_k(\di)}
\newcommand{\xiRandomVec}{\ubar{\bm{\xi}}}
\newcommand{\xiRandom}{\ubar{\xi}}
\newcommand{\UrandomScal}{{\stackunder[0.7pt]{$U$}{\rule{1.1ex}{.075ex}}}}
\newcommand{\XfracRandom}{\bm{{\stackunder[0.9pt]{$\mathfrak{X}$}{\rule{1.1ex}{.075ex}}}}}
\newcommand{\QfracRandom}{\bm{{\stackunder[0.9pt]{$\mathfrak{Q}$}{\rule{1.1ex}{.075ex}}}}}
\newcommand{\RfracRandom}{\bm{{\stackunder[0.9pt]{$\mathfrak{R}$}{\rule{1.1ex}{.075ex}}}}}
\newcommand{\DfracRandom}{\bm{{\stackunder[0.9pt]{$\mathfrak{A}$}{\rule{1.1ex}{.075ex}}}}}
\newcommand{\DfracRandomScal}{{\stackunder[0.9pt]{$\mathfrak{A}$}{\rule{1.1ex}{.075ex}}}}
\newcommand{\RfracRandomScal}{{\stackunder[0.9pt]{$\mathfrak{R}$}{\rule{1.1ex}{.075ex}}}}
\newcommand{\UfracRandom}{\bm{{\stackunder[0.9pt]{$\mathfrak{U}$}{\rule{1.1ex}{.075ex}}}}}
\newcommand{\ntp}{n \times p}
\newcommand{\ntnmp}{n \times (n-p)}

\newcommand{\pollDrandomtilde}{{\tilde{\stackunder[0.6pt]{$\mathbb{D}$}{\rule{1.2ex}{.075ex}}}}^p} 
\newcommand{\Drandomtildep}{\tilde{\stackunder[0.6pt]{$\bm{D}$}{\rule{1.2ex}{.075ex}}}^p}
\newcommand{\Ukp}{\Ub_{k,\ntp}}
\newcommand{\Ukprandom}{{{\stackunder[1.0pt]{$\Ub$}{\rule{1.0ex}{.075ex}}}}_{k,\ntp}}

\newcommand{\itksup}{\mathds{1}_{\accolade{\Tepsilon > k}}}
\newcommand{\UrandomSet}{{\stackunder[0.6pt]{$\mathbb{U}$}{\rule{1.1ex}{.075ex}}}}
\newcommand{\ukjoRandom}{\bm{{\stackunder[1.0pt]{$\ub$}{\rule{1.0ex}{.075ex}}}}_{k,j_0}}

\newcommand{\dikjoRandom}{\bm{{\stackunder[1.0pt]{$\bm{d}$}{\rule{1.0ex}{.075ex}}}}_{k,j_0}^p}
\newcommand{\UsetDrandom}{{\stackunder[0.6pt]{$\mathbb{U}$}{\rule{1.2ex}{.075ex}}}}

\usepackage{orcidlink}

\title{Direct search for stochastic optimization in random subspaces with zeroth-, first-, and second-order convergence and expected complexity\thanks{This work was supported by the CAMPA and ComPASS-4, projects of the U.S. Department of Energy, Office of Science, Office of Advanced Scientific Computing Research and Office of High Energy Physics, Scientific Discovery through Advanced Computing (SciDAC) program under Contract No.~DE-AC02-06CH11357.}}

\author{
	\href{mailto:kdzahini@anl.gov}{K. J. Dzahini \orcidlink{0000-0002-1515-4251}}\thanks{Argonne National Laboratory, 9700 S. Cass Avenue, Lemont, IL 60439, USA (\href{https://www.anl.gov/profile/kwassi-joseph-dzahini}{www.anl.gov/profile/kwassi-joseph-dzahini}). Mathematics and Computer Science Division.
	}
	\and
	\href{mailto:wild@lbl.gov}{S. M. Wild \orcidlink{0000-0002-6099-2772}}\thanks{Lawrence Berkeley National Laboratory, 1 Cyclotron Road, Berkeley, CA 94720, USA ( \href{https://wildsm.github.io/}{https://wildsm.github.io/}). Applied Mathematics and Computational Research Division.
	}
}
\date{\today}
\begin{document}


\maketitle


\noindent
{\bf Abstract:} Stochastic directional direct-search (SDDS) algorithms were recently introduced as an extension to stochastically noisy objectives of a broad class of algorithms including the well-known mesh adaptive direct-search (MADS) algorithms developed for the minimization of deterministic functions in a blackbox optimization framework. However, since SDDS methods explore the variable space via directions selected at each iteration from search sets of cardinality depending on the problem dimension, their performance quickly deteriorates as the dimension gets larger. {\bl This work introduces StoDARS}, a framework for large-scale stochastic blackbox optimization that not only is both an algorithmic and theoretical extension of the SDDS framework but also extends to noisy objectives a recent framework of direct-search algorithms in reduced spaces (DARS). Unlike SDDS, StoDARS achieves scalability by using~$m$ search directions generated in random subspaces defined through the columns of Johnson--Lindenstrauss transforms (JLTs) obtained from Haar-distributed orthogonal matrices, where the user-determined parameter~$m$ is independent of the dimension of the problem. For theoretical needs, the quality of these subspaces and the accuracy of random estimates used by the algorithm are required to hold with sufficiently large, but fixed, probabilities. In particular, the almost sure convergence to zero of the sequence of the algorithm's stepsize parameters, referred to as zeroth-order convergence, is demonstrated by using the theory of stochastic processes. Then, leveraging an existing supermartingale-based framework, the expected complexity of StoDARS is proved to be similar to that of SDDS and other stochastic full-space methods up to constants, when the objective function is continuously differentiable. By dropping the latter assumption, the ability of StoDARS to generate a dense set of subspace directions by means of the aforementioned JLTs allows its analysis to be the first of a JLT-based subspace algorithm establishing convergence to Clarke stationary points with probability one, unlike prior works on subspace methods where the use of gradient is inevitable. Moreover, the analysis of the second-order behavior of
MADS using a second-order-like extension of the Rademacher's theorem-based definition of the Clarke subdifferential (so-called generalized Hessian) is extended to the StoDARS framework, making it the first in a stochastic direct-search setting, to the best of our knowledge.

$ $\\
{\bf Keywords} Blackbox Optimization $\cdot$ Derivative-free optimization $\cdot$ Stochastic directional direct search $\cdot$ Randomized subspace methods $\cdot$ Convergence and expected complexity 

$ $\\
{\bf Mathematics Subject Classiﬁcation} 90C15 $\cdot$ 90C30 $\cdot$ 90C56 

\clearpage

\clearpage
\section{Introduction}
The growing use of sensors and observation-driven design and discovery 
has attracted interest in a number of engineering and scientific fields for solving complex optimization problems where the objective function is available only through a {\it black box}~\cite{AuHa2017}. Such situations where closed-form expression of the objective function and its derivatives are not available are addressed by derivative-free optimization (DFO). DFO methods can be broadly classified into two main categories~\cite{RobRoy2022RedSpace}: model-based methods~\cite{blanchet2016convergence,CoScVibook,CRsubspace2021,chen2018stochastic,DzaWildSub2022,shashaani2018astro}, where a model of the objective function is constructed to guide the optimization process, and direct-search methods~\cite{AbAu06,AuDe2006,audet2019stomads,CoScVibook,Ch2012,dzahini2020expected,dzahini2020constrained,DzaRinRoyZef2024Survey,rinaldi2022weak}, which do not build models but evaluate at each iteration the objective function at a collection of points and act solely based on those function values without approximating derivatives.

This work focuses on the following stochastic DFO problem:
\begin{equation}\label{probl1}
\underset{\x\in\bcX}{\min} f(\x), \qquad\mbox{with}\quad f(\x)=\Esp_{\ubar{\theta}}\left[f_{\ubar{\theta}}(\x)\right],
\end{equation}
where {\bl the nonempty set} $\bcX \subseteq\rn$ {\bl of nonzero measure} is a feasible region; the values of the function $f:\rn\to\R$ are available only through $f_{\ubar{\theta}}$, a stochastically noisy version of $f$;  ${\ubar{\theta}}$ is a random variable whose probability distribution is possibly unknown; and $\Esp_{\ubar{\theta}}$ denotes the {\it expectation} with respect to ${\ubar{\theta}}$. Problem~\eqref{probl1} arises, for example, in statistical machine learning~\cite{bottou2018optimization} where $\ubar{\theta}$ is a random variable describing a given sample set while $f_{\ubar{\theta}}$ represents the composition of a loss function and a prediction function. {\bl We particularly target settings where the problem dimension $n>100$, which is large in the context of derivative-free methods for stochastic optimization.}

Significant algorithmic and theoretical advances have recently been made in stochastic DFO with the aim of developing provable algorithms to solve Problem~\eqref{probl1}, some of which are either model-based or perform an estimation of the gradient of $f$. Direct-search approaches are an especially promising option in settings where model construction or gradient approximation can be computationally expensive, or when one might not have any information about the existence of derivatives~\cite{audet2019stomads}. Inspired by the unconstrained framework of the stochastic mesh adaptive direct-search (StoMADS) algorithm proposed by Audet et al.~\cite{audet2019stomads} (later extended to the noisy constraints case~\cite{dzahini2020constrained}), Dzahini~\cite{dzahini2020expected} recently introduced a broad class of SDDS algorithms with convergence and expected complexity analyses. At each iteration~$k$, an SDDS algorithm attempts to improve the current best solution, referred to as the {\it incumbent}, by comparing  estimates of the values of~$f$ at the incumbent solution and at most $n+1$ trial points. The accuracy of these estimates, which are required to be $\beta$-probabilistically $\ef$-accurate, is dynamically controlled by means of a {\it stepsize}~$\dk$. More precisely, in addition to the fact that the per iteration number of search directions of SDDS grows linearly with the dimension $n$, it is also proved that for convergence purposes the computation of such estimates requires $\Omega\left(\dk^{-4}/(1-\sqrt{\beta})\right)$ function evaluations~\cite{audet2019stomads,chen2018stochastic}, with a resulting overall evaluation complexity that affects the efficiency of these algorithms.

Having observed through numerical experiments that the number of function evaluations required by stochastic optimization methods such as SDDS~\cite{audet2019stomads,dzahini2020constrained} and stochastic trust-region~\cite{chen2018stochastic} algorithms to compute estimates and obtain satisfactory results in practice is significantly small{\bl er than} $\Omega\left(\dk^{-4}/(1-\sqrt{\beta})\right)$, 
an important step toward the improvement of SDDS algorithms is the significant reduction in the number of search directions, as was the case in~\cite{RobRoy2022RedSpace}. Indeed, after revisiting the analysis of direct search based on probabilistic properties~\cite{GrRoViZh2015,GraRoViZh2019}, Roberts and Royer~\cite{RobRoy2022RedSpace} recently proposed a  randomized direct-search framework for the unconstrained optimization of deterministic objectives where the search directions are chosen within random subspaces of~$\rn$. Inspired by existing derivative-based~\cite{carfowsha2022aRandomised,carfowsha2022Rand,shao2022Thesis} and derivative-free model-based~\cite{CRsubspace2021} methods that achieve scalability using random subspace strategies, the proposed class of DARS algorithms generate at each iteration exactly~$m$ search directions in~$\rn$ by combining vectors in~$\rp$ with random matrices~$\QkRandom\in\rnp$ defining the random subspaces, where~$m$ and~$p$ are significantly less than~$n$ and chosen independently of~$n$. Although DARS algorithms do not use any gradient information, their analysis to-date assumed that the objective function is continuously differentiable since in~\cite{RobRoy2022RedSpace} the gradient is central to the definition of the {\it well alignment}\footnote{We note that the {\it probabilistic well alignment} notion inspired by~\cite[Definition~3.2]{DzaWildSub2022} and introduced in Definition~\ref{wellAlignment}  not only is gradient-free compared with prior works but also is more general in that it is not specific to a given fixed direction such as a gradient vector.} of the random subspaces, a notion inherited from~\cite{CRsubspace2021} and references therein, and which is the cornerstone of all the analyses. This last observation raises the important and relevant question of whether random subspace algorithms inspired by the above methods can be analyzed without a differentiability assumption on the objective function, using, for example, Clarke nonsmooth analysis~\cite{Clar83a}.

Of the cited methods, the only ones that achieve scalability making use of random subspace strategies  not only are unconstrained but also  either are for deterministic objectives~\cite{carfowsha2022aRandomised,carfowsha2022Rand,HarRobRoy25Expected,Kozak2023,kozak2021stochastic,RobRoy2022RedSpace,NEURIPS2023_7429f4c1} or are model based~\cite{DzaWildSub2022}, utilizing the aforementioned well alignment notion and thereby gradient information. In other words, to our knowledge, no such method exists in a stochastic framework with convergence guarantees without assuming the objective function to be continuously differentiable. More particularly, there does not exist any constrained direct-search method for stochastic optimization in random subspaces. Thus, inspired by~\cite{audet2019stomads,carfowsha2022aRandomised,dzahini2020expected,DzaWildSub2022,RobRoy2022RedSpace}, we address these gaps in the present work by introducing the StoDARS class of constrained direct-search algorithms for stochastic optimization in random subspaces, in which scalability is achieved through the use of random search directions generated in random subspaces of~$\rn$. {\bl Unlike prior works allowing the selection of these subspaces by means of random matrices~$\QkRandom\in\rnp$---called Johnson--Lindenstrauss transforms (JLTs), that is, satisfying~\eqref{JLTEquat} below---with $p\ll n$, from various matrix ensembles~\cite{Database2003Ach,DzaWildHashing2022,IndykMotwani1998,KaNel2014SparseLidenstrauss}, only JLTs obtained from Haar-distributed orthogonal random matrices are allowed in StoDARS for many reasons. Indeed, the sequence of search directions generated by Haar-based JLTs is, with probability one, dense in the unit sphere of~$\rn$, which is crucial for the theoretical guarantees. Moreover, the resulting sets of search directions always form a {\it positive spanning set} (PSS; see, e.g.,~\cite[Definition~6.1]{AuHa2017}) in the selected subspace at each iteration of the algorithm---unlike those from Gaussian-based~\cite{IndykMotwani1998} in general or Hashing(-like)~\cite{Database2003Ach,DzaWildHashing2022,KaNel2014SparseLidenstrauss} JLTs---a property that is essential for practical efficiency.}

Before summarizing the contributions of this work, we note that one of the results that is central to the complexity analyses of direct-search algorithms in both the deterministic~\cite{Vicente2013} and stochastic~\cite{dzahini2020expected} cases is that at least one of the directions of a PSS makes an acute angle with the negative gradient (at a given point) of a continuously differentiable function~$f$. The first contribution of this work corresponds to probabilistic extensions to random subspace directions (that do not form a PSS) of the latter property through Propositions~\ref{propKappaD1} and~\ref{propKappaD2} and Lemma~\ref{lemKappaD3}. We note that random subspace direct-search methods are particularly appealing in high-dimensional blackbox optimization because they can substantially reduce the number of polling directions, and hence expensive function evaluations required per iteration. In contrast with full-space direct-search frameworks whose poll sets must contain at least $n+1$ directions in dimension~$n$, random subspace frameworks generate only~$m$ directions inside a $p$-dimensional subspace with~$m$ and~$p$ chosen independently of~$n$. Reducing the number of polling directions required on any iteration is beneficial, for example, when one can do concurrent function evaluations but one does not have the computational resources to do a full set of $n+1$ concurrent evaluations; such cases can arise when the function evaluation requires parallel computing resources for even a single evaluation. 

Our analysis shows that this reduction in directional sampling does not come at the expense of worst-case guarantees. Indeed, by making use of the aforementioned results while assuming the function~$f$ differentiable, and leveraging a supermartingale-based framework introduced in~\cite{blanchet2016convergence}, the analysis of StoDARS demonstrates that although using the aforementioned random subspace directions instead of PSSs, the expected complexity of the method is similar to that of stochastic trust-region~\cite{blanchet2016convergence,chen2018stochastic,DzaWildSub2022}, SDDS~\cite{dzahini2020expected}, and stochastic line-search~\cite{paquette2018stochastic} up to constants {\bbl (Theorem~\ref{complexResult})}. StoDARS, which uses random estimates required to be sufficiently accurate with large but fixed probability, is therefore the first constrained random subspace variant of SDDS algorithms and an extension of DARS methods to stochastically noisy objectives. By dropping the differentiability assumption and making use of Clarke calculus~\cite{Clar83a}, this manuscript is, to our knowledge, the first to prove convergence of a subspace method to Clarke stationary points {\bbl (Theorem~\ref{ClarkeTheor})}, unlike prior analyses of random subspace algorithms~\cite{carfowsha2022aRandomised,carfowsha2022Rand,CRsubspace2021,DzaWildSub2022,RobRoy2022RedSpace} where the use of the gradient is crucial in both the selection of the subspaces and theory. In particular, by exploiting the ability of StoDARS to generate a dense set of search directions, the analysis of second-order behavior of MADS algorithms~\cite{AbAu06} using a second-order-like extension of the Rademacher's theorem-based definition of the Clarke subdifferential~\cite[Section~1.2]{Clar83a}, so-called generalized Hessian, is extended for the first time to the stochastic case {\bbl (Theorem~\ref{secondOrderClarkeTheor})}.
A separate line of development for nonsmooth optimization of a deterministic objective has recently been explored using randomized finite-differences and the Goldstein subdifferential \cite{NEURIPS2023_7429f4c1}. 

The manuscript is organized as follows. After recalling the framework of SDDS methods, Section~\ref{sec:SDSS} discusses strategies used for subspace selection and then provides theoretical properties of the sets of search directions used by StoDARS. Section~\ref{secThree} presents the general framework of StoDARS algorithms and explains how they result in a stochastic process. The almost sure convergence to zero of the sequence of stepsize parameters, referred to as {\it zeroth-order convergence}, is demonstrated in Section~\ref{sec4}, followed by an expected complexity analysis in Section~\ref{sec5}. An almost sure convergence result to Clarke stationary points is established in Section~\ref{sec6}, followed in Section~\ref{sec7} by an analysis of the second-order behavior of StoDARS. Numerical experiments are presented in Section~\ref{sec8}, followed by discussions and suggestions for future work.

\subsubsection*{Notation}
Throughout the manuscript, vectors are written in lowercase boldface (e.g., $\vb\in\rn, n\geq 2$), and matrices are written in uppercase boldface (e.g., $\Ub\in\rnp$). Unless otherwise stated, all random quantities are underlined (e.g., $\ubar{z}$, $\vrandom$, $\Urandom$) and are defined on the same probability space $(\Omega,\F,\pr)$. The nonempty set~$\Omega$ referred to as the {\it sample space} is a collection of elements~$\omega$ called {\it sample points}, while subsets of~$\Omega$ are called {\it events}. The collection, $\F$, of these events is called a $\sigma$-algebra; and the finite measure~$\pr$ defined on the {\it measurable space} $(\Omega,\F)$, with $\prob{\Omega}=1$, is a {\it probability measure}. A {\it filtration} is an increasing subsequence $\accolade{\mathcal{H}_k}_k$ of $\sigma$-algebras of~$\F$. By denoting by~$\mathscr{B}(\rn)$ the Borel $\sigma$-algebra generated by open sets of~$\rn$, a real-valued random variable~$\ubar{z}$ is a measurable map $\ubar{z}:(\Omega,\F,\pr)\to (\R,\mathscr{B}(\R))$, where measurability means that $\accolade{\ubar{z}\in \mathscr{E}}:=\accolade{\omega\in\Omega:\ubar{z}(\omega)\in\mathscr{E}}=:\ubar{z}^{-1}(\mathscr{E})\in\F$ for all $\mathscr{E}\in \mathscr{B}(\R)$~\cite{bhattacharya2007basic}. A real-valued random matrix is a matrix-valued random variable, that is, a matrix whose entries are real-valued random variables. $\normii{\ub}:=\left(\sum_{i=1}^n u_i^2\right)^{1/2}$ denotes the Euclidean norm of a vector $\ub:=(u_1,\dots,u_n)^{\top}\in\rn$.  $\norme{\cdot}$ is used for the norm of a matrix and is assumed 
to be {\it consistent} with the Euclidean norm, that is, $\normii{\Q\ub}\leq \norme{\Q}\normii{\ub}$, which holds for both spectral and Frobenius matrix norms. $\ub^{\top}\vb=\scal{\ub}{\vb}:=\sum_{i=1}^n u_i v_i$ denotes the {\it scalar product} or {\it dot product} of the vectors $\ub$ and $\vb:=(v_1,\dots,v_n)^{\top}\in\rn$. $K_{\infty}$ denotes the random set of all {\it unsuccessful iterations} of a given algorithm. A realization of a random set~$K$ is denoted by~$\mathcal{K}$; that is, $K(\omega)=\mathcal{K}$ for some~$\omega\in\Omega$. {\bl For $a\leq b$, $\intbracket{a,b}:=[a,b]\cap\Z$ is the set of integers in the interval $[a,b]$.}

\section{Stochastic directional direct-search algorithm and random subspace polling}
\label{sec:SDSS}
The directional direct-search algorithm proposed in this work uses search directions constructed in random subspaces of~$\rn$, defined by means of specific random matrices. For the proposed method to be efficient and to meet for convergence needs, these subspaces and their resulting sets of search directions must be probabilistically rich with sufficiently high, but fixed, probabilities, to allow a better exploration of~$\rn$. This section briefly recalls the SDDS framework introduced in~\cite{dzahini2020expected}, which forms the foundation of the proposed method summarized in Algorithm~\ref{algoStoDARS}. We then discuss strategies used for random subspace selection and subspace {\it polling} and provide useful theoretical properties of the resulting {\it polling sets} used by Algorithm~\ref{algoStoDARS}.

\subsection{Full-space unconstrained stochastic directional direct-search method}
SDDS algorithms were introduced in~\cite{dzahini2020expected} to solve the stochastic optimization problem~\eqref{probl1} in the particular case $\bcX=\rn$. These algorithms use a framework inspired by that of StoMADS~\cite{audet2019stomads} and the broad class of methods presented in~\cite{Vicente2013} using a {\it generating set search} method~\cite{KoLeTo03a}, such as pattern search and generalized pattern search~\cite{AuDe03a}. Each iteration of SDDS algorithms is composed of two main steps: an optional {\it search}, not shown in~\cite[Algorithm~1]{dzahini2020expected}, and not detailed in Algorithm~\ref{vagueDirAlgo} for simplicity, and the ``more disciplined''~\cite{LaMeWi2019} {\it poll} on which  the theoretical analyses rely. Broadly speaking, each poll step of an SDDS method generates a finite set of {\it poll points} by taking an {\it incumbent solution}~$\xk\in\rn$ and adding terms of the form~$\dk\di^n$, where $\dk>0$ is the {\it stepsize} and $\di^n\in\rn$ is a nonzero vector from a finite set $\mathbb{D}^n_k\subset\rn$ of {\it poll directions}. More precisely, $\mathbb{D}^n_k$ is a PSS, meaning that any vector in~$\rn$ can be written as a nonnegative linear combination of elements in~$\mathbb{D}^n_k$~\cite[Definition~6.1]{AuHa2017} or, equivalently, the cosine measure $\kappa(\mathbb{D}^n_k)>0$, where 
\begin{equation}\label{cosineMeasureDef}
\kappa(\mathbb{D}^q):=\underset{\wb\in \R^q,\wb\neq \bm{0}}{\min}\ \underset{\di^q\in \mathbb{D}^q}{\max}\frac{\scal{\wb}{\di^q}}{\normii{\wb}\normii{\di^q}}\quad\mbox{with}\quad \mathbb{D}^q\subset\R^q.
\end{equation}
Since exact function values $f(\x)$ are unavailable in the SDDS framework, then by using the noisy function~$f_{\ubar{\theta}}$ values, so-called $\ef$-accurate estimates $\fok\approx f(\xk)$ and $\fdk\approx f(\xk+\dk\di^n)$ are computed at each iteration, the definition of which is recalled in Definition~\ref{epsAccDef0} in the context of Algorithm~\ref{algoStoDARS}. It is then demonstrated in~\cite[Proposition~1]{dzahini2020expected} (see also Proposition~\ref{decreaseProp1} below) that a sufficient decrease $\fdk-\fok\leq -\gamma\ef\dk^2$, with $\gamma>2$, leads to a decrease in~$f$. In the latter case, the iteration is called {\it successful}, resulting in an update of the incumbent solution and an increase in the stepsize. Otherwise, the iteration is {\it unsuccessful}, and the incumbent solution is not updated while the stepsize is decreased. A new iteration is then initiated unless a stopping criterion is met.

\begin{algorithm}
\caption{Simplified SDDS framework}
\label{vagueDirAlgo}
\textbf{[0] Initialization}\\
Choice of $\gamma>2, \ef>0$, and other algorithmic parameters, initialization of stepsize $\delta_0>0$, and initial point\\
$\x_0\in\rn$. \\
\textbf{[1] Search} (Optional)\\
\textbf{[2] Poll}\\
\hspace*{10mm}Choice of a positive spanning set $\mathbb{D}^n_k\subset\rn$.\\
\hspace*{10mm}Computation of $\ef$-accurate estimates $\fok\approx f(\xk)$ and $\fdk\approx f(\xk+\dk\di^n)$ using noisy function\\
\hspace*{10mm}evaluations.\\
\hspace*{10mm}If $\fdk-\fok\leq -\gamma\ef\dk^2$ for some $\di^n\in \mathbb{D}^n_k$ \emph{(successful)}\\
\hspace*{18mm}set $\xkun\gets\xk+\dk\di^n$ and $\dkun> \dk$.\\
\hspace*{10mm}Otherwise \emph{(unsuccessful)}\\ 
\hspace*{18mm}set $\xkun\gets\xk$ and $\dkun<\dk$.\\
\textbf{[3] Termination}\\
\hspace*{10mm}Stop if a criterion is met, or set $k\gets k+1$ and go to~\textbf{[1]}.
\end{algorithm}


\subsection{Random subspace selection and subspace polling}\label{sec2Point2}

By assuming that the function $f$ is continuously differentiable, and under additional assumptions including those similar to Assumption~\ref{efkfAssump} below, the expected number of iterations required by Algorithm~\ref{vagueDirAlgo} to drive the norm of the gradient of~$f$ below a given threshold $\epsilon\in (0,1)$ is proved to be $O(\epsilon^{-2})$. {\bl This result is} based on the observation that given $\vi\in\rn$ (such as the negative gradient of $f$ at some point~$\x$), at least one direction of any PSS $\mathbb{D}^n\subset\rn$ makes an acute angle with $\vi$, or equivalently (see, e.g.,~\cite[Eq.~(37) and Lemma~2]{dzahini2020expected} and~\cite[Theorem~6.5]{AuHa2017})
\begin{equation}\label{cosineDescentDkn}
\mbox{for any}\ \vi\in\rn,\quad\exists \di_{\star}^n(\vi)\in \mathbb{D}^n\subset\rn\quad\mbox{such that}\quad \kappa(\mathbb{D}^n)\normii{\vi}\normii{\di^n_{\star}(\vi)}\leq \scal{\vi}{\di^n_{\star}(\vi)}.
\end{equation}
Thus, although not using any first-order information, SDDS shares a similar worst-case complexity bound with many existing derivative-free methods, such as stochastic trust-region~\cite{blanchet2016convergence,chen2018stochastic} and stochastic line-search~\cite{paquette2018stochastic}, in terms of a dependence in~$\epsilon$, and also matches that of deterministic directional direct-search methods~\cite{Vicente2013}. Unfortunately, however, and as with all other direct-search methods using PSSs for polling,  the performance of SDDS quickly deteriorates as the dimension~$n$ of the problem gets larger; the reason is  that even the smallest PSS
has a cardinality of~$n+1$, resulting in many function evaluations in Algorithm~\ref{vagueDirAlgo} as $n$ grows.

To significantly reduce the number of poll directions, Roberts and Royer~\cite{RobRoy2022RedSpace} recently introduced a new framework, for deterministic optimization, of {\it direct search in reduced spaces}, where poll directions are generated within subspaces selected through random matrices. More precisely, their method (summarized in~\cite[Algorithm~3.1]{RobRoy2022RedSpace}) uses at each iteration~$k$ a polling set $\accolade{\Qk\di^p:\di^p\in\pollD_k}\subset\rn$ that belongs to the lower-dimensional subspace $\accolade{\Qk\di:\di\in\rp}$ of~$\rn$, obtained by combining directions~$\di^p$ from a subset $\pollD_k\subset\rp$ with a realization $\Qk\in\rnp$ of a random matrix~$\QkRandom$, where $p\leq n$ can be chosen independently of~$n$. Moreover, inspired by recent works in a model-based setting~\cite{carfowsha2022Rand,CRsubspace2021,shao2022Thesis}, the matrix~$\QkRandom$ is required to be {\it probabilistically well aligned}; that is, it must be able to capture a portion, bounded away from zero, of the unknown full-space derivative information. 

Inspired by~\cite{carfowsha2022Rand,CRsubspace2021,shao2022Thesis} and the the recent work of Dzahini and Wild~\cite{DzaWildSub2022} on {\it trust-region methods in random subspaces for stochastic optimization},
the following definition of well alignment, a little different from~\cite[Definition~3.2]{RobRoy2022RedSpace}, is considered throughout the manuscript.

\begin{definition}\label{wellAlignment}
For fixed $\alpha_Q,\beta_Q\in (0,1/2)$, 
a random matrix $\Qrandom\in\rnp$ is called $(1-\beta_Q)$-probabilistically $\alpha_Q$-well aligned if, for all $\vb\in\rn$, $\prob{\normii{\Qrandom^{\top}\vb}\geq \alpha_Q\normii{\vb}}\geq 1-\beta_Q$. Such matrices are referred to as $WAM(\alpha_Q,\beta_Q)$.
\end{definition}

Motivated by the property in~\eqref{cosineDescentDkn}, which represents the cornerstone of the complexity analyses of directional direct-search for both deterministic~\cite{Vicente2013} and stochastic~\cite{dzahini2020expected} optimization, and in order to establish its probabilistic variants 
through Propositions~\ref{propKappaD1} and~\ref{propKappaD2} that will allow the results of Lemmas~\ref{AkLemma1} and~\ref{smallLemmaHaar}, which are crucial for the complexity result derived in Theorem~\ref{complexResult}, the polling sets used by Algorithm~\ref{algoStoDARS} are defined inspired by~\cite{RobRoy2022RedSpace},  assuming that 
$\pollD_k\subset\rp$ is a PSS. More precisely, these polling sets are defined as $\mathbb{U}_k^n:=\accolade{\Ukp\di^p:\di^p\in\pollD_k}$, where $\Ukp\in\rnp$ represents a realization of the first~$p$ columns of a Haar-distributed orthogonal random matrix, satisfying $\Qk=\sqrt{\frac{n}{p}}\Ukp$ as motivated by Theorem~\ref{JLforHaarTheor} below. 
The result of Proposition~\ref{propKappaD1} is established under the following assumption.

\begin{assumption}\label{JltAss1}{\bl (Inspired by~\cite[Assumptions~2 and~3]{DzaWildSub2022} and~\cite{CRsubspace2021})}
The random matrix $\Qrandom\in\rnp$ is a $WAM(\alpha_Q,\beta_Q)$ for fixed $\alpha_Q,\beta_Q\in (0,1/2)$ and satisfies $\norme{\Qrandom}\leq\Qmax$ almost surely for some $\Qmax>0$. 
\end{assumption}

\begin{proposition}\label{propKappaD1}
Assume that $\Qrandom\in\rnp$ satisfies Assumption~\ref{JltAss1}, and let $\pollD\subset \rp$ be a PSS. For any $\bm{v}\in \rn$, 
\begin{equation}\label{kdprob1}
\prob{\accolade{\exists\ubar{\di}_{\star}^p\in\pollD:\frac{\alpha_Q}{\Qmax}\kappa(\pollD)\normii{\bm{v}}\normii{\Qrandom\ubar{\di}_{\star}^p}\leq \scal{\bm{v}}{\Qrandom\ubar{\di}_{\star}^p}}}\geq 1-\beta_Q.
\end{equation}
In other words, with high probability, there always exists a random direction $\Qrandom\ubar{\di}_{\star}^p\in\accolade{\Qrandom\di^p:\di^p\in\pollD}\subset\rn$ that makes an acute angle with $\bm{v}$. The same result holds in the case of a random PSS $\pollDrandom\subset\rp$; that is,
\begin{equation}\label{kdprob1prime}
\prob{\accolade{\exists\ubar{\di}_{\star}^p\in\pollDrandom:\frac{\alpha_Q}{\Qmax}\kappa(\pollDrandom)\normii{\bm{v}}\normii{\Qrandom\ubar{\di}_{\star}^p}\leq \scal{\bm{v}}{\Qrandom\ubar{\di}_{\star}^p}}}\geq 1-\beta_Q.
\end{equation}
\end{proposition}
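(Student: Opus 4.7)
The plan is to transfer the problem from $\rn$ to $\rp$ via the adjoint $\Qrandom^{\top}$, apply the cosine measure inequality~\eqref{cosineMeasureDef} in the lower-dimensional space where $\pollD$ is a PSS, and then pull the resulting bound back using the two controlled quantities provided by Assumption~\ref{JltAss1}: the lower bound $\normii{\Qrandom^{\top}\bm{v}}\geq \alpha_Q\normii{\bm{v}}$ on the well-aligned event, and the almost sure upper bound $\normii{\Qrandom \di^p}\leq \Qmax \normii{\di^p}$ that follows from $\norme{\Qrandom}\leq \Qmax$ together with the consistency of the norm.

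Concretely, I would fix $\bm{v}\in\rn$ and work on the event $\mathcal{E}:=\accolade{\normii{\Qrandom^{\top}\bm{v}}\geq \alpha_Q\normii{\bm{v}}}$, which has probability at least $1-\beta_Q$ by the $WAM(\alpha_Q,\beta_Q)$ property. On $\mathcal{E}$, apply the PSS cosine measure inequality in $\rp$ to the deterministic (but sample-dependent) vector $\bm{w}:=\Qrandom^{\top}\bm{v}\in\rp$: by~\eqref{cosineMeasureDef} there exists $\ubar{\di}_\star^p\in\pollD$ (chosen as an argmax over the finite set $\pollD$ of the ratio $\scal{\bm{w}}{\di^p}/(\normii{\bm{w}}\normii{\di^p})$) such that
\begin{equation*}
\kappa(\pollD)\,\normii{\Qrandom^{\top}\bm{v}}\,\normii{\ubar{\di}_\star^p} \;\leq\; \scal{\Qrandom^{\top}\bm{v}}{\ubar{\di}_\star^p} \;=\; \scal{\bm{v}}{\Qrandom\ubar{\di}_\star^p}.
\end{equation*}
Using $\normii{\Qrandom^{\top}\bm{v}}\geq \alpha_Q\normii{\bm{v}}$ on $\mathcal{E}$, and then $\normii{\ubar{\di}_\star^p}\geq \normii{\Qrandom\ubar{\di}_\star^p}/\Qmax$ (which holds almost surely by Assumption~\ref{JltAss1} and norm consistency), yields the target inequality $\frac{\alpha_Q}{\Qmax}\kappa(\pollD)\normii{\bm{v}}\normii{\Qrandom\ubar{\di}_\star^p}\leq \scal{\bm{v}}{\Qrandom\ubar{\di}_\star^p}$. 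Intersecting with $\mathcal{E}$ then gives~\eqref{kdprob1}.

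For the random PSS case~\eqref{kdprob1prime}, I would essentially repeat the same argument with $\pollDrandom$ in place of $\pollD$. The only additional care needed is measurability: since $\pollDrandom$ is a random finite subset of $\rp$, the selected $\ubar{\di}_\star^p$ must be a measurable function of $(\Qrandom,\pollDrandom)$. This is handled by breaking ties deterministically (for instance, choosing the lexicographically first direction of $\pollDrandom$ attaining the max in~\eqref{cosineMeasureDef} applied to $\Qrandom^{\top}\bm{v}$), so that the selection is measurable with respect to the underlying $\sigma$-algebra carrying $\Qrandom$ and $\pollDrandom$. The cosine measure inequality still holds pointwise in $\omega$ for each realization of $\pollDrandom$, so the algebraic chain above goes through verbatim.

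The main obstacle is not algebraic but purely bookkeeping: one must (i) avoid degeneracy when $\bm{v}=\bm{0}$ (the inequality is trivial, so restrict to $\bm{v}\neq\bm{0}$), (ii) handle the case $\Qrandom\ubar{\di}_\star^p=\bm{0}$ (only possible when $\scal{\bm{v}}{\Qrandom\ubar{\di}_\star^p}=0$, also trivial), and (iii) confirm the measurability of $\ubar{\di}_\star^p$ so that the inner event in~\eqref{kdprob1}–\eqref{kdprob1prime} is genuinely an event in $\F$. None of these presents any real difficulty beyond careful presentation.
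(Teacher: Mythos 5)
Your proposal is correct and follows essentially the same route as the paper's proof: condition on the well-alignment event $\{\normii{\Qrandom^{\top}\bm{v}}\geq\alpha_Q\normii{\bm{v}}\}$, apply the PSS cosine-measure inequality in $\rp$ to $\Qrandom^{\top}\bm{v}$, and convert $\normii{\di_\star^p}$ to $\normii{\Qrandom\di_\star^p}/\Qmax$ via norm consistency. The extra measurability and degeneracy remarks are careful additions that the paper leaves implicit but do not change the argument.
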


\begin{proof}
Let $\bm{v}\in \rn$ be given. Assume that the event $\mathscr{E}_1:=\accolade{\normii{\QrandomTranspose\bm{v}}\geq \alpha_Q\normii{\bm{v}}}$ occurs, which happens with a probability of at least $1-\beta_Q$, and let $\omega\in\mathscr{E}_1$ be arbitrary. Since $\pollD$ is a PSS, it contains a direction $\di_{\star}^p=\ubar{\di}_{\star}^p(\omega)$ that makes an acute angle with the deterministic vector $\Qrandom(\omega)^\top \bm{v}\in \rp$. Formally thanks to~\eqref{cosineDescentDkn}, this means that 
\begin{equation}\label{kdeq1}
\kappa(\pollD)\normii{\Qrandom(\omega)^\top \bm{v}}\normii{\di_{\star}^p}\leq \scal{\Qrandom(\omega)^\top \bm{v}}{\di_{\star}^p}=\scal{\bm{v}}{\Qrandom(\omega)\di_{\star}^p}.
\end{equation}
It follows from~\eqref{kdeq1} and the inequality $\normii{\Qrandom(\omega)^\top\bm{v}}\geq \alpha_Q\normii{\bm{v}}$ that $\alpha_Q\kappa(\pollD)\normii{\bm{v}}\normii{\di_{\star}^p}\leq \scal{\bm{v}}{\Qrandom(\omega)\di_{\star}^p}$, which leads to $\frac{\alpha_Q}{\Qmax}\kappa(\pollD)\normii{\bm{v}}\normii{\Qrandom(\omega)\di_{\star}^p}\leq \scal{\bm{v}}{\Qrandom(\omega)\di_{\star}^p}$, given that $\normii{\Qrandom(\omega)\di_{\star}^p}\leq \Qmax\normii{\di_{\star}^p}$. We have therefore proved that $\mathscr{E}_1\subseteq\mathscr{E}_2:=\accolade{\exists\ubar{\di}_{\star}^p\in\pollD:\frac{\alpha_Q}{\Qmax}\kappa(\pollD)\normii{\bm{v}}\normii{\Qrandom\ubar{\di}_{\star}^p}\leq \scal{\bm{v}}{\Qrandom\ubar{\di}_{\star}^p}}$, whence $\prob{\mathscr{E}_2}\geq \prob{\mathscr{E}_1}\geq 1-\beta_Q$, leading to~\eqref{kdprob1}. 

In the case of a random PSS $\pollDrandom$, the same reasoning  applies with $\pollD$ and $\di_{\star}^p$ replaced with $\pollDrandom(\omega)$ and $\di_{\star}^p(\omega)$ (which are deterministic), respectively, thus leading to~\eqref{kdprob1prime}, and the proof is completed.
\end{proof}

The poll directions used by Algorithm~\ref{algoStoDARS} are generated by using Haar-distributed orthogonal random matrices. Recall that a matrix $\bm{O}\in \rnn$ is {\it orthogonal} if and only if $\bm{O}^{\top}\bm{O}=\bm{O}\bm{O}^{\top}=\bm{I}_n$ or equivalently if its columns form an orthonormal basis of $\rn$~\cite[Section~1.1]{MeckesElizHaar2019}. Throughout the manuscript, the compact Lie group of $n\times n$ orthogonal matrices over~$\R$ will be denoted by $\mathbb{O}(n)$. 
A {\it uniform random element} of $\mathbb{O}(n)$ is a random $\Urandom\in \mathbb{O}(n)$ whose distribution is translation invariant; that is, for any fixed $\bm{O}\in \mathbb{O}(n)$, $\bm{O}\Urandom\equald \Urandom\bm{O}\equald \Urandom$~\cite[Section~1.2]{MeckesElizHaar2019}, where ``$\equald$'' is used for the equality in distribution. In particular, there exists a unique translation-invariant probability measure on $\mathbb{O}(n)$ called Haar measure on~$\mathbb{O}(n)$~\cite[Theorem~1.4]{MeckesElizHaar2019}. Any random matrix whose distribution is Haar measure will be called Haar-{\it distributed matrix}. 
\begin{remark}
{\bl One may naturally wonder why we chose to use \emph{Haar}-distributed orthogonal random matrices.} Indeed, it is well known from probability theory that a sequence~$\accoladekinN{\vrandom_k}$ of i.i.d.\ random vectors~$\vrandom_k\sim\mathcal{U}(\snOne)$ is dense on the unit sphere~$\snOne$ with probability one. Thus, in order for Algorithm~\ref{algoStoDARS} to generate dense sets of poll directions although operating in subspaces, not only for a better exploration of the full-space~$\rn$ but especially in order to establish the results of Theorems~\ref{ClarkeTheor} and~\ref{secondOrderClarkeTheor} inspired by~\cite{AuDe2006} and~\cite{AbAu06}, respectively, \emph{Haar}-distributed orthogonal matrices are used for polling. This choice is also based on the observation from Propositions~\ref{unitHaarProp} and~\ref{densityProp} that \emph{Haar}-distributed orthogonal matrices, when combined with vectors from~$\spOne$ in the appropriate way, result in random vectors distributed according to~$\mathcal{U}(\snOne)$. On the other hand, the need for~$p$ to be chosen independently of~$n$, resulting in a minimum of~$m=p+1$ poll directions in~$\mathbb{U}_k^n:=\{\Ukp\di^p:\di^p\in \pollD_k\}$ whenever $\abs{\pollD_k}=p+1$, along with the observation from Theorem~\ref{JLforHaarTheor} that \emph{Haar}-distributed-based matrices satisfy Assumption~\ref{JltAss1} while meeting the above requirement on~$p$, justifies the specific choice of such matrices in the remainder of the manuscript.
\end{remark}

Lemma~\ref{HaarLemma} and Theorem~\ref{LeviTheorem}, stated next, provide useful results for the proof of Theorem~\ref{JLforHaarTheor}.
\begin{lemma}\label{HaarLemma}\cite[Section~2.1]{MeckesElizHaar2019}
Let $\Urandom:=(\UrandomScal_{ij})_{1\leq i,j\leq n}\in \mathbb{O}(n)$ be \emph{Haar}-distributed. Then, the entries of $\Urandom$ are identically distributed. Moreover, $\E{\UrandomScal_{11}}=0$ and $\E{\UrandomScal_{11}^2}=\frac{1}{n}$.
\end{lemma}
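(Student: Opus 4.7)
The plan is to exploit the defining translation invariance of the Haar measure---that $\bm{O}\Urandom\equald\Urandom\bm{O}\equald\Urandom$ for every fixed $\bm{O}\in\mathbb{O}(n)$---combined with the fact that rows and columns of an orthogonal matrix are orthonormal. Each of the three conclusions will follow by plugging in a cleverly chosen test matrix in $\mathbb{O}(n)$ and then reading off the distributional identity that translation invariance gives.

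For the identical distribution of the entries, given two index pairs $(i,j)$ and $(i',j')$, I would take $\Pb_1\in\mathbb{O}(n)$ to be the permutation matrix that swaps rows~$i$ and~$i'$ and $\Pb_2\in\mathbb{O}(n)$ the one that swaps columns~$j$ and~$j'$. A short indexing check shows that the $(i',j')$-entry of $\Pb_1\Urandom\Pb_2$ coincides with $\UrandomScal_{ij}$; since $\Pb_1\Urandom\Pb_2\equald\Urandom$ by translation invariance applied on both sides, reading off the $(i',j')$-entry of both sides yields $\UrandomScal_{i'j'}\equald\UrandomScal_{ij}$. For the vanishing first moment, I would apply the same principle with the sign-flip $\bm{O}=\operatorname{diag}(-1,1,\dots,1)\in\mathbb{O}(n)$, which gives $(\bm{O}\Urandom)_{11}=-\UrandomScal_{11}$ and hence $-\UrandomScal_{11}\equald\UrandomScal_{11}$; taking expectations, which is legitimate since $|\UrandomScal_{11}|\leq 1$ almost surely as an entry of an orthogonal matrix, then forces $\E{\UrandomScal_{11}}=0$.

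For the second moment, the key identity comes from orthogonality itself rather than from invariance: since $\Urandom^{\top}\Urandom=\bm{I}_n$ almost surely, the first row of $\Urandom$ is a unit vector, so $\sum_{j=1}^n\UrandomScal_{1j}^2=1$ pathwise. Taking expectations and invoking the identical-distribution step (so that $\E{\UrandomScal_{1j}^2}=\E{\UrandomScal_{11}^2}$ for every $j$) produces $n\,\E{\UrandomScal_{11}^2}=1$, i.e., $\E{\UrandomScal_{11}^2}=1/n$. There is no genuine obstacle anywhere in the argument; the only care required is the routine indexing verification that the permutation matrices in the first step indeed transport the $(i,j)$-entry to the $(i',j')$-position. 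All the nontrivial content of the lemma is in fact already packaged into the translation-invariance property of the Haar measure.
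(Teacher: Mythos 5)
Your proof is correct. Note that the paper itself offers no proof of this lemma at all: it is stated as an imported result, with the justification deferred entirely to the citation of Meckes's book, so there is no in-paper argument to compare against. Your argument is the standard one (and is essentially what the cited source does): two-sided translation invariance under permutation matrices $\Pb_1\Urandom\Pb_2\equald\Urandom$ transports the $(i,j)$-entry to the $(i',j')$-position and gives identical distribution of the entries; invariance under the sign-flip $\operatorname{diag}(-1,1,\dots,1)$ gives the symmetry $-\UrandomScal_{11}\equald\UrandomScal_{11}$ and hence a vanishing mean (integrability being clear since $\abs{\UrandomScal_{11}}\leq 1$ almost surely); and the pathwise identity $\sum_{j=1}^n\UrandomScal_{1j}^2=1$ from orthonormality of the first row, combined with the identical-distribution step, yields $\E{\UrandomScal_{11}^2}=1/n$. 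All three steps check out, including the indexing verification $(\Pb_1\Urandom\Pb_2)_{i'j'}=\UrandomScal_{ij}$ and the legitimacy of chaining the left and right invariances. Nothing is missing.
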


The following theorem states that any Lipschitz function~$F$ on the Euclidean sphere~$\snOne$ concentrates well. It is proved by arguing that ``any nonlinear function must concentrate at least as strongly as a linear function''~\cite{vershynin2018HDP}, which is shown by comparing the {\it sublevel sets} $\accolade{\x:F(\x)\leq \mathfrak{c}}$ using a  geometric principle: an {\it isoperimetric inequality}~\cite[Theorems~5.1.5 and~5.1.6]{vershynin2018HDP}.
\begin{theorem}\label{LeviTheorem}(\cite[Theorem~5.1.4 and Exercise~5.1.12]{vershynin2018HDP})
Let $F$ be an $L_F$-Lipschitz function on the unit sphere $\mathbb{S}^{n-1}$, and let $\xiRandomVec\sim \mathcal{U}(\mathbb{S}^{n-1})$. Then there exists an absolute constant $\eta>0$ such that for every $t\geq 0$
\[\prob{\abs{F\left(\xiRandomVec\right)-\E{F\left(\xiRandomVec\right)}}\geq t}\leq 2\exp{\left(-\frac{\eta n t^2}{L_F^2}\right)}. \]
\end{theorem}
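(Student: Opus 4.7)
The plan is to follow the classical Lévy concentration argument based on the spherical isoperimetric inequality, exactly along the lines hinted at by the quoted reasoning in the preceding paragraph. First, by replacing $F$ with $F/L_F$, I would reduce to the case $L_F = 1$, so that the target inequality becomes $\prob{\abs{F(\xiRandomVec) - \E{F(\xiRandomVec)}} \geq t} \leq 2\exp(-\eta n t^2)$ for some absolute constant $\eta>0$ and for all $t\geq 0$.

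Next I would work with the median $M_F$ of $F(\xiRandomVec)$ rather than the mean. Define the sublevel set $A := \{\x\in\snOne : F(\x) \leq M_F\}$, which has uniform measure at least $1/2$ by definition of the median. The core geometric fact to invoke is the spherical isoperimetric inequality: among all subsets of $\snOne$ of a given measure, geodesic caps minimize the measure of their $t$-neighborhood complement. From this one obtains the standard concentration-of-measure statement on the sphere, namely that for every Borel set $A\subset\snOne$ with $\sigma(A)\geq 1/2$, its $t$-neighborhood $A_t := \{\x\in\snOne : \dist{\x}{A}\leq t\}$ satisfies $\sigma(\snOne\setminus A_t) \leq 2\exp(-c n t^2)$ for an absolute constant $c>0$. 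Since $F$ is $1$-Lipschitz, every $\x\in A_t$ satisfies $F(\x) \leq M_F + t$, so $\prob{F(\xiRandomVec) > M_F + t} \leq 2\exp(-cnt^2)$. Applying the same reasoning to $-F$ (whose median is $-M_F$, also $1$-Lipschitz) yields the lower tail $\prob{F(\xiRandomVec) < M_F - t} \leq 2\exp(-cnt^2)$, and combining gives the concentration bound around the median with constant $2$ out front after a union bound (or with constant $4$ before redefining $\eta$).

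To replace $M_F$ by $\E{F(\xiRandomVec)}$, I would integrate the tail bound just established: $\abs{\E{F(\xiRandomVec)} - M_F} \leq \int_0^\infty \prob{\abs{F(\xiRandomVec) - M_F}\geq t}\,dt \leq 2\int_0^\infty 2e^{-cnt^2}\,dt = C/\sqrt{n}$ for some absolute $C$. Then for $t \geq 2C/\sqrt{n}$ one has $\abs{F(\xiRandomVec) - \E{F(\xiRandomVec)}}\geq t \Rightarrow \abs{F(\xiRandomVec) - M_F}\geq t/2$, which by the median version gives a bound of the desired form with an adjusted constant; for $t \leq 2C/\sqrt{n}$ the exponential on the right-hand side of the claim is bounded below by a positive constant, and the probability bound is trivially $\leq 2$ times that constant after absorbing everything into a final $\eta$. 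Undoing the normalization $F\mapsto F/L_F$ then replaces $t$ by $t/L_F$ on the right-hand side, yielding the stated inequality.

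The main obstacle is not really the argument itself but the appeal to the spherical isoperimetric inequality, whose proof (via two-point symmetrization or via Gromov's comparison with the Gaussian) is nontrivial; here, however, I would quote it directly as an external geometric input, since the excerpt's citation to \cite{vershynin2018HDP} packages precisely this fact. The only other mildly delicate point is the mean-vs-median transfer, which must be done carefully so that the absolute constant $\eta$ in the final bound does not depend on $F$ or on $n$; the integration estimate above handles this uniformly.
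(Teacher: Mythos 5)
Your proposal is correct and follows essentially the same route as the source the paper cites for this result: the paper does not prove Theorem~\ref{LeviTheorem} itself but imports it from Vershynin, whose argument is exactly the one you reconstruct (reduce to the $1$-Lipschitz case, concentrate around the median via the spherical isoperimetric inequality and the blow-up of sublevel sets, then transfer from median to mean by integrating the tail). The constant-chasing you describe for absorbing the factor $4$ into $2$ and handling small $t$ is handled correctly, so nothing further is needed.
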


The result stated below in Theorem~\ref{JLforHaarTheor} shows how random matrices satisfying Assumption~\ref{JltAss1} and used for subspace polling in Algorithm~\ref{algoStoDARS} can be made available. It also shows that the number of directions in the poll sets $\mathbb{U}_k^n$ can be chosen independently of~$n$. It is inspired by the proof of the Johnson--Lindenstrauss lemma~\cite[Lemma~6.1]{MeckesElizHaar2019}, which states that after proper rescaling, the pairwise Euclidean distances between \scalebox{0.7}{$\aleph$} points of~$\rn$ projected onto a random subspace of dimension on the order of~$\log(\scalebox{0.7}{$\aleph$})$ hardly changes. This lemma is recalled next using the notation of the manuscript to emphasize how, unlike Theorem~\ref{JLforHaarTheor}, it does not straightforwardly show  the way the resulting matrix satisfies Assumption~\ref{JltAss1}.

\begin{lemma}\cite[Lemma~6.1]{MeckesElizHaar2019}
There exist absolute constants~$c$ and~$C$ such that the following holds. Let $\x_1,\x_2,\dots,\x_{\scalebox{0.7}{$\aleph$}}\in \rn$, and $\Urandom_{p\times n}\in \rpn$ consisting of the first~$p$ rows of the \emph{Haar}-distributed random matrix $\Urandom:=(\UrandomScal_{ij})_{1\leq i,j\leq n}\in \mathbb{O}(n)$. Fix $\varepsilon>0$ and let $p:=\frac{a\log(\scalebox{0.7}{$\aleph$})}{\varepsilon^2}$. With probability at least~$1-C\scalebox{0.7}{$\aleph$}^{2-\frac{ac}{4}}$, the matrix $\QrandomTranspose:=\sqrt{\frac{n}{p}}\Urandom_{p\times n}$ satisfies
\[(1-\varepsilon)\normii{\x_i-\x_j}^2\leq \normii{\QrandomTranspose(\x_i-\x_j)}^2\leq(1+\varepsilon)\normii{\x_i-\x_j}^2\quad\mbox{for all}\ i,j\in\intbracket{1,\scalebox{0.7}{$\aleph$}}.\]
\end{lemma}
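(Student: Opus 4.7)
My plan is the standard Johnson--Lindenstrauss argument: first establish, for any fixed $\ub\in\snOne$, the single-vector concentration estimate
\[\prob{\abs{\normii{\QrandomTranspose\ub}^2 - 1} > \varepsilon} \;\leq\; 2\exp\!\left(-c_0\varepsilon^2 p\right)\]
for some absolute constant $c_0>0$, and then apply it to each of the normalized differences $\ub_{ij}:=(\x_i-\x_j)/\normii{\x_i-\x_j}$ (for $i<j$ with $\x_i\neq\x_j$) and take a union bound. With $p=a\log(\scalebox{0.7}{$\aleph$})/\varepsilon^2$, each single-vector failure has probability at most $2\scalebox{0.7}{$\aleph$}^{-c_0 a}$, and the union bound costs a factor of at most $\binom{\scalebox{0.7}{$\aleph$}}{2}\leq\scalebox{0.7}{$\aleph$}^2$, so the overall failure probability is at most $2\scalebox{0.7}{$\aleph$}^{2-c_0 a}$. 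Setting $c:=4c_0$ and absorbing the leading factor $2$ into $C$ matches the stated tail $C\scalebox{0.7}{$\aleph$}^{2-ac/4}$.

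For the single-vector step I would exploit the rotational invariance of the Haar measure to reduce to a coordinate direction. For any $\ub\in\snOne$, picking $\bm{O}\in\mathbb{O}(n)$ with $\bm{O}\eb_1=\ub$ gives $\Urandom\ub=(\Urandom\bm{O})\eb_1\equald\Urandom\eb_1$, and the rotation invariance of $\Urandom\eb_1$ (the first column of $\Urandom$) implies $\Urandom\eb_1\sim\U(\snOne)$. Writing $\xiRandomVec\sim\U(\snOne)$ and denoting by $\xiRandomVec_{[p]}\in\rp$ its first $p$ coordinates, one obtains $\QrandomTranspose\ub\equald\sqrt{n/p}\,\xiRandomVec_{[p]}$. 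The function $F:\snOne\to\R$ defined by $F(\vb):=\normii{\vb_{[p]}}$ is $1$-Lipschitz (a coordinate projection composed with the Euclidean norm), and Lemma~\ref{HaarLemma} together with the symmetry of $\U(\snOne)$ yields $\E{F(\xiRandomVec)^2}=p/n$. Invoking Theorem~\ref{LeviTheorem} with $L_F=1$ and threshold $t=\tilde c\,\varepsilon\sqrt{p/n}$ for a sufficiently small absolute constant $\tilde c$, then rescaling by $n/p$, delivers the desired single-vector concentration.

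The main delicate point is the transition from concentration of $F(\xiRandomVec)$ around $\E{F(\xiRandomVec)}$ to concentration of $(n/p)F(\xiRandomVec)^2=\normii{\QrandomTranspose\ub}^2$ around $1$. This requires showing that $\E{F(\xiRandomVec)}$ is within $O(1/\sqrt{p})$ of $\sqrt{p/n}$, so that centering at $\E{F(\xiRandomVec)}$ may be replaced by centering at $\sqrt{p/n}$ without degrading the exponent. The upper bound $\E{F(\xiRandomVec)}\leq\sqrt{\E{F(\xiRandomVec)^2}}=\sqrt{p/n}$ is Jensen's inequality, while the matching lower bound follows by extracting from Theorem~\ref{LeviTheorem} a variance estimate of order $1/n$ for $F(\xiRandomVec)$ via the identity $\E{(F-\E F)^2}=\int_0^\infty 2t\,\prob{\abs{F-\E F}>t}\,dt$. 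Once this centering is secured, expanding $F(\xiRandomVec)^2-p/n$ and plugging $t=\tilde c\,\varepsilon\sqrt{p/n}$ into the Lévy bound yields the single-vector tail with exponent $c_0\varepsilon^2 p$, and the union-bound step from the first paragraph closes the argument.
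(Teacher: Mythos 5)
Your argument is correct and is essentially the same one the paper relies on: the paper states this lemma as a citation, but the proof it gives for Theorem~\ref{JLforHaarTheor} is exactly your single-vector step (rotational invariance reducing $\QrandomTranspose\ub$ to the first $p$ coordinates of a uniform vector on $\snOne$, Lipschitz concentration via Theorem~\ref{LeviTheorem}, and the variance/integral-identity argument to recenter $\E{F(\xiRandomVec)}$ at $\sqrt{p/n}$). The only ingredient you add is the standard union bound over the $\binom{\scalebox{0.7}{$\aleph$}}{2}$ normalized difference vectors, which correctly produces the $C\scalebox{0.7}{$\aleph$}^{2-ac/4}$ tail.
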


Although the proof of Theorem~\ref{JLforHaarTheor} is derived from that of~\cite[Lemma~6.1]{MeckesElizHaar2019}, {\bl we provide additional details, especially regarding the choice of~$p$ along with minor details for the sake of clarity and a better understanding, in the appendix}.
\begin{theorem}\label{JLforHaarTheor}(Inspired by~\cite[Lemma~6.1]{MeckesElizHaar2019})
Let $\varepsilon_Q, \beta_Q\in (0,1)$ and $\Urandom:=(\UrandomScal_{ij})_{1\leq i,j\leq n}\in \mathbb{O}(n)$ be a \emph{Haar}-distributed random matrix. Let $\Urandom_{p\times n}\in \rpn$ be the matrix consisting of the first $p$ rows of $\Urandom$, and define $\QrandomTranspose:=\sqrt{\frac{n}{p}}\Urandom_{p\times n}$. There exists an absolute constant $\eta>0$ such that if $p\geq  \frac{4}{\eta}\varepsilon_Q^{-2}\log\left(\frac{2}{\beta_Q}\right)$, then
\begin{equation}\label{JLTEquat}
\prob{(1-\varepsilon_Q)\normii{\x}\leq \normii{\QrandomTranspose\x}\leq(1+\varepsilon_Q)\normii{\x}}\geq 1-\beta_Q\quad\mbox{for all}\ \x\in \rn.
\end{equation} 
\end{theorem}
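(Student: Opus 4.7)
\medskip

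\noindent\textbf{Proof proposal.} The plan is to fix an arbitrary nonzero $\x\in\rn$, reduce by homogeneity to the unit-norm case (so that both sides of~\eqref{JLTEquat} are scaling-invariant), and then apply Lévy's concentration inequality (Theorem~\ref{LeviTheorem}) to a suitably chosen Lipschitz function on the sphere~$\snOne$. Specifically, for $\x$ with $\normii{\x}=1$, exploit the translation (rotation) invariance of the Haar measure on~$\mathbb{O}(n)$ to assert that $\xiRandomVec := \Urandom\x$ is uniformly distributed on~$\snOne$. Since the first $p$ coordinates of $\xiRandomVec$ are precisely $\Urandom_{p\times n}\x$, the quantity of interest can be rewritten as
\[
\normii{\QrandomTranspose\x} \;=\; \sqrt{\frac{n}{p}}\,\normii{(\xiRandomVec_1,\dots,\xiRandomVec_p)^{\top}} \;=\; F(\xiRandomVec),
\]
where $F(\yb):=\sqrt{n/p}\cdot\bigl(\sum_{i=1}^p y_i^2\bigr)^{1/2}$ is defined on~$\snOne$.

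Next I would verify the two inputs needed by Theorem~\ref{LeviTheorem}. First, $F$ is $\sqrt{n/p}$-Lipschitz on~$\snOne$ by the reverse triangle inequality followed by Cauchy--Schwarz, so $L_F=\sqrt{n/p}$. Second, Lemma~\ref{HaarLemma} gives $\E{\xiRandomVec_i^{\,2}}=1/n$ for each $i$, so
\[
\E{F(\xiRandomVec)^2} \;=\; \frac{n}{p}\sum_{i=1}^p \E{\xiRandomVec_i^{\,2}} \;=\; 1.
\]
By Jensen's inequality this yields $\E{F(\xiRandomVec)}\leq 1$. For a matching lower bound, bound the variance by integrating the tail estimate produced by Theorem~\ref{LeviTheorem}: plugging $L_F^2=n/p$ gives $\prob{\abs{F(\xiRandomVec)-\E{F(\xiRandomVec)}}\geq t}\leq 2\exp(-\eta p t^2)$, hence
\[
\var{F(\xiRandomVec)} \;=\; \int_0^\infty 2t\,\prob{\abs{F(\xiRandomVec)-\E{F(\xiRandomVec)}}\geq t}\,dt \;\leq\; \frac{2}{\eta p}.
\]
Combining with $\E{F^2}=1$ gives $\E{F(\xiRandomVec)}^2\geq 1-2/(\eta p)$, whence $\E{F(\xiRandomVec)}\geq 1-2/(\eta p)$ by $\sqrt{1-u}\geq 1-u$.

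With $\E{F(\xiRandomVec)}$ pinched into $[1-2/(\eta p),\,1]$, I would finish via a triangle-inequality split: writing $\abs{F(\xiRandomVec)-1}\leq \abs{F(\xiRandomVec)-\E{F(\xiRandomVec)}}+\abs{\E{F(\xiRandomVec)}-1}$, I would choose $t=\varepsilon_Q/2$ in the concentration bound and enforce $2/(\eta p)\leq \varepsilon_Q/2$ to absorb the deterministic gap into the remaining $\varepsilon_Q/2$ slack. The concentration bound then yields $2\exp(-\eta p\varepsilon_Q^2/4)\leq \beta_Q$ provided $p\geq (4/\eta)\varepsilon_Q^{-2}\log(2/\beta_Q)$, which is the hypothesis of the theorem (and also dominates $p\geq 4/(\eta\varepsilon_Q)$ for $\varepsilon_Q,\beta_Q\in(0,1)$). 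Undoing the normalization $\x\leftarrow\x/\normii{\x}$ restores~\eqref{JLTEquat} for arbitrary~$\x$ (with equality holding trivially at $\x=\zerob$).

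\medskip
\noindent\textbf{Anticipated obstacle.} The only delicate point is the transfer from the concentration of $F(\xiRandomVec)$ around its mean to concentration around the target value~$1$: a naive application of Lévy's inequality controls $\abs{F-\E{F}}$, not $\abs{F-1}$. Absorbing the bias $\abs{\E{F(\xiRandomVec)}-1}$ into the final estimate forces the variance-based lower bound on $\E{F(\xiRandomVec)}$, and requires the choice of~$p$ to simultaneously kill the stochastic tail (through $\exp(-\eta p\varepsilon_Q^2/4)$) and the deterministic bias (through $1/(\eta p)\lesssim \varepsilon_Q$). Tracking the constants carefully so that the single lower bound $p\geq (4/\eta)\varepsilon_Q^{-2}\log(2/\beta_Q)$ suffices for both conditions is what makes the argument slightly more subtle than a direct invocation of the Johnson--Lindenstrauss lemma recalled just before the theorem.
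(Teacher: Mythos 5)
Your proposal follows the paper's proof almost line for line: the same reduction to $\x\in\snOne$ via Haar invariance, the same function $F(\yb)=\sqrt{n/p}\,\normii{(y_1,\dots,y_p)}$ with Lipschitz constant $\sqrt{n/p}$, the same application of Theorem~\ref{LeviTheorem}, the same layer-cake bound $\var{F\left(\xiRandomVec\right)}\leq 2/(\eta p)$ combined with $\E{F\left(\xiRandomVec\right)^2}=1$, and the same triangle-inequality split of $\abs{F\left(\xiRandomVec\right)-1}$ into a stochastic fluctuation and a deterministic bias.

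There is, however, one genuine (if small) gap in your final bookkeeping. You lower-bound the mean by $\E{F\left(\xiRandomVec\right)}\geq\sqrt{1-2/(\eta p)}\geq 1-2/(\eta p)$ and then require $2/(\eta p)\leq\varepsilon_Q/2$, i.e.\ $p\geq 4/(\eta\varepsilon_Q)$, claiming this is dominated by the stated hypothesis $p\geq(4/\eta)\varepsilon_Q^{-2}\log(2/\beta_Q)$ for all $\varepsilon_Q,\beta_Q\in(0,1)$. That domination is equivalent to $\log(2/\beta_Q)\geq\varepsilon_Q$, which fails when $\varepsilon_Q$ is close to $1$ and $\beta_Q$ is large; for instance $\varepsilon_Q=0.9$ and $\beta_Q=0.99$ give $\log(2/\beta_Q)\approx 0.70<0.9$. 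The loss comes from discarding the square root. The paper instead keeps $\E{F\left(\xiRandomVec\right)}\geq\sqrt{1-2/(\eta p)}$ and imposes $(1-\varepsilon_Q/2)^2\leq 1-2/(\eta p)$, i.e.\ $2/(\eta p)\leq\varepsilon_Q-\varepsilon_Q^2/4$; since $\varepsilon_Q-\varepsilon_Q^2/4>(3/4)\varepsilon_Q^2$ for $\varepsilon_Q\in(0,1)$, the bias condition is met once $p\geq(8/(3\eta))\varepsilon_Q^{-2}$, and this is dominated by the stated hypothesis because $\log(2/\beta_Q)>\log 2>2/3$. Restoring the square root (equivalently, weakening your bias requirement from $2/(\eta p)\leq\varepsilon_Q/2$ to $2/(\eta p)\leq\varepsilon_Q-\varepsilon_Q^2/4$) closes the gap; everything else in your argument is sound.
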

\begin{proof}
{\bbl See Appendix~\ref{theor22}.}
\end{proof}

Among the various constructions of Haar measure on~$\mathbb{O}(n)$, including the Riemannian perspective, the explicit geometric construction, the inductive construction and the Gauss--Gram--Schmidt approach~\cite[Section~1.2]{MeckesElizHaar2019} (see also~\cite{MezzadriHaar2007} and references therein), the latter  not only happens to be ``the most commonly used, but also the easiest to implement on a computer''~\cite{MeckesElizHaar2019}. It  is described as follows. Let $\XfracRandom = \QfracRandom\RfracRandom$ be the ``QR-decomposition'' obtained via the Gram--Schmidt process of a random matrix $\XfracRandom\in \rnn$ from the real Ginibre ensemble, that is, with i.i.d. standard Gaussian entries. From a theoretical point of view,~$\QfracRandom$ is Haar-distributed, as demonstrated in~\cite[Section~1.2]{MeckesElizHaar2019}. However, as discussed in~\cite{MezzadriHaar2007} (and  observed by Edelman and Rao~\cite{EdelmanRao2005Haar} in the case of {\it unitary matrices}), unlike Householder {\it reflections}~\cite[Theorem~4]{MezzadriHaar2007}, algorithms adopting the above Gram--Schmidt approach are not numerically stable; that is, the output~$\QfracRandom$ might not be Haar-distributed in practice. The following correction (see, e.g.,~\cite{Stewart1980Haar} for a proof) is commonly used~\cite[Algorithm~3]{BhQPMADS2015}. Let {\bl $\DfracRandom$} be the diagonal matrix with elements ${\bl\DfracRandomScal_{ii}}=\sign(\RfracRandomScal_{ii})$, where $\RfracRandom = (\RfracRandomScal_{ij})_{1\leq i,j\leq n}$. Then~$\UfracRandom:=\QfracRandom{\bl\DfracRandom}$ is Haar-distributed on~$\mathbb{O}(n)$.

The following result is a consequence of Proposition~\ref{propKappaD1} in the specific case where $\Qrandom$ is constructed by using Haar measure and given by Theorem~\ref{JLforHaarTheor}. It is used together with Lemma~\ref{substLemma} to derive in Proposition~\ref{propKappaD2} another stochastic variant of the property in~\eqref{cosineDescentDkn}.

\begin{lemma}\label{lemKappaD3}
Let $\Qrandom\in\rnp$ be given by Theorem~\ref{JLforHaarTheor}, and let $\pollD\subset \rp$ be a PSS. For any $\bm{v}\in \rn$, the random vector $\hat{\di}^p(\vb,\qrandom)\in\argmax\accolade{\scal{\vb}{\Qrandom\di^p}:\di^p\in\pollD}$ satisfies
\begin{equation}\label{kdpr1}
\prob{\alpha_Q\kappa(\pollD)\normii{\bm{v}}\leq \langle\vb, \Qrandom\hat{\di}^p(\vb,\qrandom)\rangle}\geq 1-\beta_Q.
\end{equation}
A similar result holds for any $\bm{v}\in \rn$ with ${\hat{\di}^p}(\vb,\qrandom,\drandom)\in\argmax\accolade{\scal{\vb}{\Qrandom\ubar{\di}^p}:\ubar{\di}^p\in\pollDrandom}$ in the case of a random PSS $\pollDrandom\subset\rp$; that is,
\begin{equation}\label{kdpr2}
\prob{\alpha_Q\kappa(\pollDrandom)\normii{\bm{v}}\leq \langle\vb, \Qrandom{\hat{\di}^p}(\vb,\qrandom,\drandom)\rangle}\geq 1-\beta_Q.
\end{equation}
\end{lemma}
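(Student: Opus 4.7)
The plan is to adapt the argument of Proposition~\ref{propKappaD1}, exploiting the fact that $\hat{\di}^p(\vb,\qrandom)$ is by definition a maximizer of $\langle \vb, \Qrandom\di^p\rangle$ over $\pollD$, so it only improves upon the bound provided by any particular PSS element. First, I would invoke Theorem~\ref{JLforHaarTheor}: applied to the fixed vector $\vb \in \rn$, it gives that $\Qrandom$ is $WAM(\alpha_Q,\beta_Q)$ with $\alpha_Q = 1-\varepsilon_Q$, so the event
$$\mathscr{E}_1 := \accolade{\normii{\Qrandom^{\top}\vb} \geq \alpha_Q\normii{\vb}}$$
has probability at least $1-\beta_Q$. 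All subsequent work is carried out on $\mathscr{E}_1$.

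Next, I would fix an arbitrary $\omega \in \mathscr{E}_1$ so that $\Qrandom(\omega)$ is deterministic. Applying the cosine-measure property~\eqref{cosineDescentDkn} of the (deterministic) PSS $\pollD \subset \rp$ to the vector $\Qrandom(\omega)^{\top}\vb \in \rp$ yields some $\di_\star^p \in \pollD$ such that
$$\kappa(\pollD)\normii{\Qrandom(\omega)^{\top}\vb}\normii{\di_\star^p} \;\leq\; \scal{\Qrandom(\omega)^{\top}\vb}{\di_\star^p} \;=\; \scal{\vb}{\Qrandom(\omega)\di_\star^p}.$$
Using the lower bound $\normii{\Qrandom(\omega)^{\top}\vb} \geq \alpha_Q\normii{\vb}$ valid on $\mathscr{E}_1$, together with the standard direct-search convention that the elements of a PSS are taken to be of unit norm (so that $\normii{\di_\star^p} \geq 1$), the right-hand side dominates $\alpha_Q\kappa(\pollD)\normii{\vb}$.

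Finally, since $\hat{\di}^p(\vb,\Qrandom(\omega)) \in \argmax\accolade{\scal{\vb}{\Qrandom(\omega)\di^p}:\di^p\in\pollD}$ and $\di_\star^p \in \pollD$, one obtains
$$\scal{\vb}{\Qrandom(\omega)\hat{\di}^p(\vb,\Qrandom(\omega))} \;\geq\; \scal{\vb}{\Qrandom(\omega)\di_\star^p} \;\geq\; \alpha_Q\kappa(\pollD)\normii{\vb},$$
so that $\mathscr{E}_1 \subseteq \accolade{\alpha_Q\kappa(\pollD)\normii{\vb} \leq \scal{\vb}{\Qrandom\hat{\di}^p(\vb,\qrandom)}}$, and~\eqref{kdpr1} follows by monotonicity of $\pr$. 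The random-PSS case~\eqref{kdpr2} is obtained by the same reasoning applied $\omega$-wise: for each $\omega \in \mathscr{E}_1$, $\pollDrandom(\omega)$ is a deterministic PSS and the cosine-measure/argmax chain is identical after replacing $\pollD$ and $\di_\star^p$ by $\pollDrandom(\omega)$ and $\di_\star^p(\omega)$.

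The main obstacle is not analytical but rather a measurable-selection concern: in the random-PSS case, $\hat{\di}^p(\vb,\qrandom,\drandom)$ is defined through an argmax that need not be single-valued, so one has to justify that it admits a measurable version, which follows from a standard selection theorem (e.g., Kuratowski--Ryll-Nardzewski) since $\pollDrandom$ is finite. A secondary bookkeeping point is the implicit normalization $\normii{\di_\star^p} \geq 1$; if the paper does not adopt unit-norm polling directions, the constant in~\eqref{kdpr1}--\eqref{kdpr2} would need to be rescaled by $\min_{\di^p\in\pollD}\normii{\di^p}$, but otherwise the argument is unchanged.
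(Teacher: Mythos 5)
Your proof is correct and follows essentially the same route as the paper's: condition on the well-alignment event $\mathscr{E}_1$, apply the cosine-measure property of the PSS to $\Qrandom(\omega)^{\top}\vb$ in $\rp$, and conclude by monotonicity of the argmax. The paper phrases this by invoking Proposition~\ref{propKappaD1} and then noting that for the Haar-based construction $\normii{\Qrandom\di^p}=\Qmax$ exactly (so the factor $\normii{\Qrandom\ubar{\di}_{\star}^p}/\Qmax$ equals one), which is the same normalization fact you handle via $\normii{\di_{\star}^p}=1$; your measurable-selection caveat is harmless but unnecessary given that the polling sets are finite.
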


\begin{proof}
By construction, $\Q=\sqrt{\frac{n}{p}}\Ub_{\ntp}$, where $\ubar{\Ub}_{\ntp}$ represents the first $p$ columns of a Haar-distributed orthogonal matrix and $\normii{\di^p}=1$ for all $\di^p\in\pollD$, whence $\norme{\Q}=\sqrt{\frac{n}{p}}=:\Qmax =\normii{\Q\di^p}$ since $\normii{\Ub_{\ntp}\di^p}=1$. Consequently, $\accolade{\exists\ubar{\di}_{\star}^p\in\pollD:\frac{\alpha_Q}{\Qmax}\kappa(\pollD)\normii{\bm{v}}\normii{\Qrandom\ubar{\di}_{\star}^p}\leq \scal{\bm{v}}{\Qrandom\ubar{\di}_{\star}^p}}=\accolade{\exists\ubar{\di}_{\star}^p\in\pollD:\alpha_Q\kappa(\pollD)\normii{\bm{v}}\leq \scal{\bm{v}}{\Qrandom\ubar{\di}_{\star}^p}}$, which is included in the event $\accolade{\alpha_Q\kappa(\pollD)\normii{\bm{v}}\leq \langle\vb, \Qrandom{\hat{\di}^p}(\vb,\qrandom)\rangle}$, leading to~\eqref{kdpr1} thanks to~\eqref{kdprob1}. The result in~\eqref{kdpr2} follows from~\eqref{kdprob1prime} using similar arguments by replacing $\pollD$ with~$\pollDrandom$ and ${\hat{\di}^p}(\vb,\qrandom)$ with ${\hat{\di}^p}(\vb,\qrandom,\drandom)$, and the proof is completed.
\end{proof}

\begin{lemma}\cite[Theorem~2.7-$(\ell)$]{bhattacharya2007basic}\label{substLemma}
Let $\ubar{\vb}$ and $\Mrandom$ be random maps into $(\mathcal{S}_1, \mathscr{S}_1)$ and $(\mathcal{S}_2, \mathscr{S}_2)$, respectively. Let $\mathcal{F}$ be a $\sigma$-algebra and $\psi$ be a measurable real-valued function on $\left(\mathcal{S}_1\times \mathcal{S}_2, \mathscr{S}_1\bigotimes\mathscr{S}_2\right)$. If $\ubar{\vb}$ is $\mathcal{F}$-measurable, $\sigma(\Mrandom)$ and $\mathcal{F}$ are independent, and $\E{\abs{\psi(\ubar{\vb},\Mrandom)}}<\infty$, then $\E{\psi(\ubar{\vb},\Mrandom)|\mathcal{F}}=\wp(\ubar{\vb})$ almost surely, where $\wp(\vb):=\E{\psi(\vb,\Mrandom)}$ for all $\vb\in\mathcal{S}_1$.
\end{lemma}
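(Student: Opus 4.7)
The strategy is the classical measure-theoretic ``standard machine'': establish the identity first for indicators of measurable rectangles, extend it to the full product $\sigma$-algebra $\mathscr{S}_1 \otimes \mathscr{S}_2$ via Dynkin's $\pi$-$\lambda$ theorem, and then lift it from indicators to integrable $\psi$ through the usual approximation hierarchy (nonnegative simple, nonnegative measurable, integrable).

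For the base case, take $\psi(\vb, m) = \mathbf{1}_A(\vb)\,\mathbf{1}_B(m)$ with $A \in \mathscr{S}_1$ and $B \in \mathscr{S}_2$. Because $\ubar{\vb}$ is $\mathcal{F}$-measurable, so is $\mathbf{1}_A(\ubar{\vb})$, and the ``pull out what is known'' property of conditional expectation yields $\E{\mathbf{1}_A(\ubar{\vb})\,\mathbf{1}_B(\Mrandom)\,|\,\mathcal{F}} = \mathbf{1}_A(\ubar{\vb})\,\E{\mathbf{1}_B(\Mrandom)\,|\,\mathcal{F}}$ almost surely. Since $\sigma(\Mrandom)$ and $\mathcal{F}$ are independent, $\mathbf{1}_B(\Mrandom)$ is independent of $\mathcal{F}$, so $\E{\mathbf{1}_B(\Mrandom)\,|\,\mathcal{F}} = \pr(\Mrandom \in B)$ almost surely. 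On the other hand, Tonelli's theorem gives $\wp(\vb) = \mathbf{1}_A(\vb)\,\pr(\Mrandom \in B)$, so the desired identity $\E{\psi(\ubar{\vb}, \Mrandom)\,|\,\mathcal{F}} = \wp(\ubar{\vb})$ holds on the generators.

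Now let $\mathcal{L} := \{ C \in \mathscr{S}_1 \otimes \mathscr{S}_2 : \psi = \mathbf{1}_C \text{ satisfies the conclusion}\}$. The collection $\mathcal{L}$ contains the $\pi$-system of measurable rectangles, contains $\mathcal{S}_1 \times \mathcal{S}_2$, is closed under proper differences (by linearity of conditional expectation), and is closed under countable monotone unions (by the monotone convergence theorem for conditional expectations, together with Tonelli ensuring that $\wp$ persists as a measurable limit). Dynkin's $\pi$-$\lambda$ theorem then yields $\mathcal{L} = \mathscr{S}_1 \otimes \mathscr{S}_2$. Linearity of conditional expectation extends the identity to nonnegative simple $\psi$; monotone convergence for conditional expectations extends it to nonnegative measurable $\psi$; and the decomposition $\psi = \psi^+ - \psi^-$, permitted by the hypothesis $\E{\abs{\psi(\ubar{\vb}, \Mrandom)}} < \infty$, completes the extension to integrable $\psi$.

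The main technical obstacle is a measurability issue underlying every step above: one must guarantee that $\wp : \mathcal{S}_1 \to \mathbb{R}$ is $\mathscr{S}_1$-measurable, so that $\wp(\ubar{\vb})$ is $\mathcal{F}$-measurable, as required of any conditional expectation with respect to $\mathcal{F}$. This reduces to Tonelli's theorem applied to the joint law $\mathbb{P}_{\ubar{\vb}} \otimes \mathbb{P}_{\Mrandom}$, valid precisely because of the independence of $\sigma(\Mrandom)$ and $\sigma(\ubar{\vb}) \subseteq \mathcal{F}$. Once this measurability is secured at the level of bounded $\psi$, it propagates through each layer of the standard machine by routine arguments: the Dynkin step preserves it under monotone limits, the passage from indicators to simple functions is linear, the monotone-convergence step uses the fact that pointwise suprema of measurable functions are measurable, and the signed-integrable step is just subtraction. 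Everything else is a direct invocation of standard conditional expectation properties.
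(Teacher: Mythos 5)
The paper does not prove this lemma at all: it is imported verbatim as Theorem~2.7-$(\ell)$ of \cite{bhattacharya2007basic} and used as a black box, so there is no in-paper argument to compare against. Your proof is the standard and correct derivation of this substitution (``freezing'') property: verify the identity on indicators of rectangles using pull-out and independence, upgrade to all of $\mathscr{S}_1\bigotimes\mathscr{S}_2$ by the $\pi$-$\lambda$ theorem, and climb the usual ladder (simple, nonnegative, integrable) with conditional monotone convergence; the $\psi=\psi^+-\psi^-$ split at the end also quietly takes care of the fact that $\wp(\vb)$ is guaranteed finite only for $\pr_{\ubar{\vb}}$-almost every $\vb$ under the stated integrability hypothesis. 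One small correction of emphasis: the $\mathscr{S}_1$-measurability of $\wp$, i.e.\ of $\vb\mapsto\int_{\mathcal{S}_2}\psi(\vb,m)\,\pr_{\Mrandom}(dm)$, is the section-measurability step in the construction of product measures and holds for any product-measurable $\psi$ irrespective of independence; what independence actually buys is that the joint law of $(\ubar{\vb},\Mrandom)$ equals $\pr_{\ubar{\vb}}\otimes\pr_{\Mrandom}$ (so Fubini applies to the pair) and that $\E{\mathbf{1}_B(\Mrandom)\,|\,\mathcal{F}}=\prob{\Mrandom\in B}$ in your base case. With that caveat noted, the argument is complete and sound.
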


The result stated next, leading to Lemma~\ref{AkLemma1}, shows that the conclusions of Lemma~\ref{lemKappaD3} still hold conditionally to any $\sigma$-algebra~$\mathcal{F}$ that is independent of~$\Qrandom$, when $\vi$ is replaced with an~$\mathcal{F}$-measurable random vector~$\ubar{\vi}$. It is motivated by the need to derive in Lemma~\ref{smallLemmaHaar} a subspace variant of~\cite[Lemma~2]{dzahini2020expected}, which is key to the expected complexity result of Theorem~\ref{complexResult}.
\begin{proposition}\label{propKappaD2}
Let $\mathcal{F}$ be a $\sigma$-algebra independent of a random matrix $\Qrandom\in\rnp$ given by Theorem~\ref{JLforHaarTheor}, and let $\ubar{\vb}\in \rn$ be an $\mathcal{F}$-measurable random vector. Let $\pollD\subset \rp$ be a PSS, and consider the random vector defined by $\hat{\di}^p(\ubar{\vb},\qrandom)\in\argmax\accolade{\scal{\ubar{\vb}}{\Qrandom\di^p}:\di^p\in\pollD}$. Then 
\begin{equation}\label{kdprob2}
\prob{\left.\alpha_Q\kappa(\pollD)\normii{\ubar{\vb}} \leq \langle\ubar{\vb}, \Qrandom\hat{\di}^p(\ubar{\vb},\qrandom)\rangle \right|\F}\geq 1-\beta_Q\quad\mbox{almost surely.}
\end{equation}
A similar result holds for any $\ubar{\vb}\in \rn$ with $\hat{\di}^p(\ubar{\vb},\qrandom,\drandom)\in\argmax\accolade{\scal{\ubar{\vb}}{\Qrandom\ubar{\di}^p}:\ubar{\di}^p\in\pollDrandom}$ in the case of a random PSS $\pollDrandom\subset\rp$ with $\Drandom$ independent of $\mathcal{F}$, 
that is,
\begin{equation}\label{kdprob2prime}
\prob{\left.\alpha_Q\kappa(\pollDrandom)\normii{\ubar{\vb}} \leq \scal{\ubar{\vb} }{\Qrandom{\hat{\di}^p}(\ubar{\vb},\qrandom,\drandom)} \right|\F}\geq 1-\beta_Q\quad\mbox{almost surely.}
\end{equation}
\end{proposition}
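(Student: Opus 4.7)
The plan is to derive~\eqref{kdprob2} as a direct application of the substitution lemma (Lemma~\ref{substLemma}) to the unconditional result of Lemma~\ref{lemKappaD3}. The key observation is that, although $\hat{\di}^p(\vb,\qrandom)$ is defined as an argmax, the event we want to control can be written without reference to any selection rule. Namely, for any deterministic $\vb$, the inequality $\alpha_Q\kappa(\pollD)\normii{\vb}\leq \scal{\vb}{\Qrandom\hat{\di}^p(\vb,\qrandom)}$ is equivalent to
\[
\accolade{\exists\,\di^p\in\pollD:\ \alpha_Q\kappa(\pollD)\normii{\vb}\leq \scal{\vb}{\Qrandom\di^p}},
\]
since $\hat{\di}^p(\vb,\qrandom)$ achieves the maximum over the finite set $\pollD$. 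This is clearly a measurable event in $(\vb,\Qrandom)$, so we may define the bounded measurable function $\psi:\rn\times\rnp\to\R$ by $\psi(\vb,\Mb)=\mathds{1}_{\{\exists\,\di^p\in\pollD:\ \alpha_Q\kappa(\pollD)\normii{\vb}\leq \scal{\vb}{\Mb\di^p}\}}$. With this choice, the measurability assumptions of Lemma~\ref{substLemma} are satisfied, and $\E{\abs{\psi(\ubar{\vb},\Qrandom)}}\leq 1<\infty$.

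Applying Lemma~\ref{substLemma} with $\Mrandom=\Qrandom$ (which is independent of $\mathcal{F}$ by assumption) and the $\mathcal{F}$-measurable map $\ubar{\vb}$, I get
\[
\E{\psi(\ubar{\vb},\Qrandom)\,|\,\mathcal{F}}=\wp(\ubar{\vb})\quad\mbox{almost surely,}\quad\mbox{where}\quad \wp(\vb):=\E{\psi(\vb,\Qrandom)}.
\]
By the (unconditional) result~\eqref{kdpr1} of Lemma~\ref{lemKappaD3} applied to each fixed deterministic $\vb\in\rn$, $\wp(\vb)=\prob{\alpha_Q\kappa(\pollD)\normii{\vb}\leq \scal{\vb}{\Qrandom\hat{\di}^p(\vb,\qrandom)}}\geq 1-\beta_Q$. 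Substituting $\vb=\ubar{\vb}(\omega)$ therefore yields $\wp(\ubar{\vb})\geq 1-\beta_Q$ almost surely, and~\eqref{kdprob2} follows.

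The random PSS case~\eqref{kdprob2prime} is handled by the same argument, with minor bookkeeping. I enlarge the random object to the joint pair $(\Qrandom,\Drandom)$, which, being the pair of two random elements each independent of $\mathcal{F}$ (and already used together in Lemma~\ref{lemKappaD3}), is itself independent of $\mathcal{F}$. The indicator $\psi$ is redefined as $\psi(\vb,\Mb,\mathbb{D}^p):=\mathds{1}_{\{\exists\,\di^p\in\mathbb{D}^p:\ \alpha_Q\kappa(\mathbb{D}^p)\normii{\vb}\leq \scal{\vb}{\Mb\di^p}\}}$, which is jointly measurable in its arguments, and invoking~\eqref{kdpr2} in place of~\eqref{kdpr1} gives $\wp(\vb)\geq 1-\beta_Q$ for every deterministic $\vb$. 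The substitution lemma then yields~\eqref{kdprob2prime}.

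The only real subtlety I anticipate is the one addressed in the first paragraph: one should not try to apply the substitution lemma directly to the random direction $\hat{\di}^p(\ubar{\vb},\qrandom)$, because this object couples $\ubar{\vb}$ and $\Qrandom$ through an argmax and its joint measurability would need a separate argument. Rewriting the event as an existential statement over the finite set $\pollD$ bypasses this difficulty entirely and reduces the proof to a clean application of Lemma~\ref{substLemma} combined with Lemma~\ref{lemKappaD3}.
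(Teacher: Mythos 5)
Your proof is correct and follows essentially the same route as the paper's: both arguments reduce~\eqref{kdprob2} and~\eqref{kdprob2prime} to an application of Lemma~\ref{substLemma} to an indicator function of the event in question, combined with the unconditional bounds~\eqref{kdpr1} and~\eqref{kdpr2} of Lemma~\ref{lemKappaD3}. Your rewriting of the argmax-based event as an existential statement over the finite PSS is a worthwhile refinement that makes the joint measurability of the indicator transparent, a point the paper's proof simply asserts.
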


\begin{proof}
Let $\psi$ defined on $(\rn\times\rnp, \mathscr{B}(\rn)\bigotimes\mathscr{B}(\rnp))$ by $\psi(\ubar{\vb},\Qrandom)=\mathds{1}_{\accolade{\alpha_Q\kappa(\pollD)\normii{\ubar{\vb}} \leq \scal{\ubar{\vb}}{\Qrandom\hat{\di}^p(\ubar{\vb},\qrandom)}}}$ be a real-valued measurable function. Since $\E{\abs{\psi(\ubar{\vb},\Qrandom)}}<\infty$ trivially, it follows from Lemma~\ref{substLemma} that $\E{\psi(\ubar{\vb},\Qrandom)|\mathcal{F}}=\wp(\ubar{\vb})$ almost surely, where $\wp(\vb):=\E{\mathds{1}_{\accolade{\alpha_Q\kappa(\pollD)\normii{\vb} \leq \scal{\vb}{\qrandom\hat{\di}^p(\vb,\qrandom)}}}}=\prob{\alpha_Q\kappa(\pollD)\normii{\vb} \leq \langle\vb,\Qrandom\hat{\di}^p(\vb,\qrandom) \rangle}$ for all $\vb\in\rn$. It follows from~\eqref{kdpr1} that $\wp(\vb)\geq 1-\beta_Q$ for all $\vb\in\rn$ whence $\wp(\ubar{\vb})\geq 1-\beta_Q$ almost surely. This means that $\wp(\ubar{\vb}):=\E{\psi(\ubar{\vb},\Qrandom)|\mathcal{F}}=\prob{\left.\alpha_Q\kappa(\pollD)\normii{\ubar{\vb}} \leq \langle\ubar{\vb}, \Qrandom\hat{\di}^p(\ubar{\vb},\qrandom)\rangle \right|\F}\geq 1-\beta_Q$ almost surely, proving~\eqref{kdprob2}. To prove~\eqref{kdprob2prime}, consider the measurable real-valued function $\chi$ defined on $(\rn\times\R^{p\times (n+m)}, \mathscr{B}(\rn)\bigotimes\mathscr{B}(\R^{p\times (n+m)}))$ by $\chi\left(\ubar{\vb},\left[\Qrandom^{\top}\ \Drandom\right]\right)=\mathds{1}_{\accolade{\alpha_Q\kappa(\pollDrandom)\normii{\ubar{\vb}} \leq \scal{\ubar{\vb}}{\qrandom\hat{\di}^p(\ubar{\vb},\qrandom,\drandom)}}}$, satisfying $\E{\abs{\chi\left(\ubar{\vb},\left[\Qrandom^{\top}\ \Drandom\right]\right)}}<\infty$. $\sigma\left(\left[\Qrandom^{\top}\ \Drandom\right]\right)$ is independent of~$\F$ whence $\E{\left. \chi\left(\ubar{\vb},\left[\Qrandom^{\top}\ \Drandom\right]\right) \right|\F}=\tilde{\wp}(\ubar{\vb})$, where $\tilde{\wp}(\vb):=\prob{\alpha_Q\kappa(\pollDrandom)\normii{{\vb}} \leq \scal{{\vb}}{\Qrandom\hat{\di}^p({\vb},\qrandom,\drandom)}}$, thanks to Lemma~\ref{substLemma}. It follows from~\eqref{kdpr2} that $\tilde{\wp}(\vb)\geq 1-\beta_Q$ $\forall\vb\in\rn$, whence $\tilde{\wp}(\ubar{\vb}):=\E{\left. \chi\left(\ubar{\vb},\left[\Qrandom^{\top}\ \Drandom\right]\right) \right|\F}\geq 1-\beta_Q$ almost surely, showing~\eqref{kdprob2prime}, and the proof is completed.
\end{proof}

\begin{figure}[ht]
\begin{multicols}{2}
\vspace*{-0.7cm}
\begin{adjustbox}{max totalsize={.9\textwidth}{.9\textheight},center}
\begin{tikzpicture}
\def \R{3}
\coordinate (O) at (0, 0, 0);
\draw (0, 0, 0);
\begin{scope}[rotate=30]
\path[draw, fill = cyan, opacity=0.5] (\R,0) arc [start angle=0,
end angle=180,
x radius=\R cm,
y radius=1*\R cm] ;
\path[draw, fill = cyan, opacity=0.5] (-\R,0) arc [start angle=180,
end angle=360,
x radius=\R cm,
y radius=1*\R cm] ;
\end{scope}
\draw[black, thick, rotate=45, ->] (0,0,0) -- ++(0,0, 1.845*\R);
\draw[black, thick, rotate=-75, ->] (0,0,0) -- ++(0,0, 1.845*\R);
\draw[black, thick, rotate=-15, ->] (0,0,0) -- ++(0,0, -1.845*\R);
\draw (0, 2, 0) node[black]{$\mathbb{R}^p,\ p=2$};
\draw (0, -1.8, 0) node[black, right=0.3mm]{$\bm{d}^p\in \mathbb{D}^p$};
\draw (1.0, -1, 0) node[black]{$\norme{\bm{d}^p}=1$};
\end{tikzpicture}
\end{adjustbox}

\columnbreak

\begin{adjustbox}{max totalsize={.9\textwidth}{.9\textheight},center}
\begin{tikzpicture}
\def \R{3}
\filldraw[ball color=white] (0,0) circle (\R);
\fill[ball color=white, opacity=0.8] (0,0,0) circle (\R); 
\coordinate (O) at (0, 0, 0);
\begin{scope}[rotate=30]
\path[draw,dashed, fill = yellow, opacity=0.25] (\R,0) arc [start angle=0,
end angle=180,
x radius=\R cm,
y radius=0.5*\R cm] ;
\path[draw, fill = yellow, opacity=0.25] (-\R,0) arc [start angle=180,
end angle=360,
x radius=\R cm,
y radius=0.5*\R cm] ;
\end{scope}
\draw (0.1, 0.2, 0) node[black]{$\x$}; 
\draw[red, thick, dashed, rotate = 120, ->] (0,0,0) -- node[above]{} (1.1*\R,0,0);
\draw (0, 0, 0) node[black]{$\bullet$};
\draw (0, 2.5, 0) node[black]{$\mathbb{R}^n,\ n=3$};
\draw (-1.9, 2.85, 0) node[black]{$\bm{v}$};
\foreach \angle[count=\n from 1] in {-30} {
\begin{scope}[rotate=\angle]
	\path[draw,dashed, fill = cyan, opacity=0.5] (\R,0) arc [start angle=0,
	end angle=180,
	x radius=\R cm,
	y radius=0.5*\R cm] ;
	\path[draw, fill = cyan, opacity=0.5] (-\R,0) arc [start angle=180,
	end angle=360,
	x radius=\R cm,
	y radius=0.5*\R cm] ;
\end{scope}
}
\draw[red, thick, dashed, rotate = 120, ->] (0,0,0) -- node[above]{} (1.1*\R,0,0);
\draw (1.8, -0.8, 0) node[black]{\scalebox{0.6}{$\bm{U}_2$}};
\draw (-0.8, -0.8, 0) node[black]{\scalebox{0.6}{$\bm{U}_1$}};
\draw (-1.5, 0, 0) node[black]{\scalebox{0.6}{$\norme{\bm{U}_1}=1$}};
\draw (-1.7, 1.4, 0) node[black]{\scalebox{0.8}{$\textcolor{black}{\bm{U}_{\ntp}}\bm{d}^p$}};
\draw[red, thick, rotate=59, ->] (0,0,0) -- ++(0,0, 1.15*\R);
\draw[red, thick, rotate=-29, ->] (0,0,0) -- ++(0,0, -1.15*\R);
\draw[red, thick, rotate = 150, ->] (0,0,0) -- node[above]{} ( \R, 0,0);
\draw (0, 0, 0) node[black]{$\bullet$};
\draw (0.1, 0.2, 0) node[black]{$\x$}; 
\draw[black, dashed, rotate=15, ->] (0,0,0) -- ++(0,0, 0.925*\R);
\draw[black, dashed, rotate=105, ->] (0,0,0) -- ++(0,0,1.843*\R);
\end{tikzpicture}
\end{adjustbox}
\end{multicols}
\caption{Illustration of a minimal positive basis {\bl (the smallest PSS, i.e., with exactly $p+1$ directions)} $\pollD\subset\spOne$ of~$\rp$, with $p=2$, and the resulting set of poll directions $\mathbb{U}^n:=\accolade{\bm{U}_{\ntp}\di^p:\di^p\in \pollD}$, with $n=3$, where $\bm{U}_{\ntp}:=\left[\scalebox{0.7}{$\bm{U}_1$}\cdots \scalebox{0.7}{$\bm{U}_p$}\right]\in\rnp$ is a matrix with orthonormal columns. As demonstrated by Proposition~\ref{Prop2Point3}, $\pollD$ has the same positive spanning properties as $\mathbb{U}^n$ in the subspace generated by the vectors \scalebox{0.7}{$\bm{U}_j$}, $j\in\intbracket{1,p}$, illustrated by the blue hyperplane on the right. When $\vb\in\rn$ is a descent direction at~$\x$, say $\vb=-\nabla f(\x)\neq\bm{0}$ assuming~$f$ differentiable, the selection of the yellow hyperplane must be avoided since none of its poll directions makes an acute angle with~$\vb$ as desired by  complexity theory; a situation that may also impact the efficiency of an algorithm using such poll directions. Indeed, whenever the yellow hyperplane is selected, it holds that $\bm{U}_{\ntp}^{\top}\vb=\bm{0}$ even though $\vb\neq\bm{0}$ as assumed above. Fortunately such situations are avoided with high probability in Algorithm~\ref{algoStoDARS}, since the random matrix $\Qrandom:=\sqrt{\frac{n}{p}}\Urandom_{\ntp}$ satisfies $\normii{\QrandomTranspose\vb}\geq \alpha_{Q}\normii{\vb}$ for some $\alpha_{Q}\in (0,1)$ with high probability thanks to Theorem~\ref{JLforHaarTheor}, when obtained from Haar distribution. Moreover, Lemma~\ref{lemKappaD3} ensures that the illustrated behaviour of the blue hyperplane on the right (and its poll directions) with respect to~$\vb$ occurs with high probability, that is, with high probability there exists a random poll direction that makes an acute angle with $\vb$.}
\label{figTikz1}
\end{figure}
\vspace*{0.5cm}


We conclude this section by the result stated next, which shows another advantage of generating poll directions in Algorithm~\ref{algoStoDARS} from Haar-distributed matrices combined with positive spanning sets $\pollD\subset\spOne$. Indeed, although $\mathbb{U}^n:=\accolade{\bm{U}_{\ntp}\di^p:\di^p\in \pollD}$ is not a positive spanning set of~$\rn$, it has the same positive spanning properties as $\pollD$ in the subspace defined by the columns of $\bm{U}_{\ntp}:=\left[\scalebox{0.7}{$\bm{U}_1$}\cdots \scalebox{0.7}{$\bm{U}_p$}\right]${\bl ; such properties are not necessarily guaranteed by other matrices such as Gaussian~\cite[Theorem~3.4]{DzaWildSub2022} or hashing(-like) matrices~\cite{DzaWildHashing2022,KaNel2014SparseLidenstrauss}}. Thus, since the corresponding random subspaces are generated by random vectors (with realizations {\scalebox{0.7}{$\bm{U}_j$}}) that are distributed according to
{$\mathcal{U}(\snOne)$} for all $j\in\intbracket{1,p}$ (see Figure~\ref{figTikz1} for an illustration), one may expect an efficient exploration of the variable space by Algorithm~\ref{algoStoDARS} despite not using positive spanning sets of~$\rn$ for polling.

\begin{proposition}\label{Prop2Point3}
Let $\bm{U}_{\ntp}\in\rnp$ be a matrix with orthonormal columns and assume that $\pollD\subset\spOne$ is a positive spanning set of~$\rp$. Consider the set $\mathbb{U}^n:=\accolade{\bm{U}_{\ntp}\di^p:\di^p\in \pollD}\subset\ {\emph{range}}(\bm{U}_{\ntp})\subset\rn$, where ${\emph{range}}(\bm{U}_{\ntp})$ denotes the span of the column vectors of $\bm{U}_{\ntp}$. Define the cosine measure of $\mathbb{U}^n$ with respect to ${\emph{range}}(\bm{U}_{\ntp})$ by \[\kappa(\mathbb{U}^n{|}_{{\emph{range}}(\bm{U}_{\ntp})}):=\underset{\wb\in {\emph{range}}(\bm{U}_{\ntp})}{\min}\ \underset{\ub\in\mathbb{U}^n}{\max}\frac{\scal{\wb}{\ub}}{\normii{\wb}\normii{\ub}}.\]
Then $\kappa(\mathbb{U}^n{|}_{{\emph{range}}(\bm{U}_{\ntp})})=\kappa(\pollD)=:\kappa(\pollD{|}_{\rp})>0$, that is, the directions in $\mathbb{U}^n$ form a positive spanning set in the column space of $\bm{U}_{\ntp}$. Moreover for all $\di^p_i, \di^p_j\in\pollD$, it holds that $\scal{\bm{U}_{\ntp}\di^p_i}{\bm{U}_{\ntp}\di^p_j}=\scal{\di^p_i}{\di^p_j}$ while $\normii{\bm{U}_{\ntp}\di^p}=\normii{\di^p}=1 $ for all $\di^p\in\pollD$. In other words, $\pollD$ has exactly the same positive spanning properties as $\mathbb{U}^n$ in ${\emph{range}}(\bm{U}_{\ntp})$.
\end{proposition}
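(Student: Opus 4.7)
The proof is essentially a straightforward consequence of the fact that a matrix with orthonormal columns acts as an isometry from $\rp$ onto $\mathrm{range}(\bm{U}_{\ntp})$. So the plan is to first exploit the identity $\bm{U}_{\ntp}^{\top}\bm{U}_{\ntp}=\bm{I}_p$ to establish the inner-product and norm-preservation claims, and then use this isometric identification to reduce the cosine-measure computation on $\mathbb{U}^n|_{\mathrm{range}(\bm{U}_{\ntp})}$ to the one already known for $\pollD$ on $\rp$.

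First I would write, for any $\di^p_i,\di^p_j\in\pollD$,
\[
\scal{\bm{U}_{\ntp}\di^p_i}{\bm{U}_{\ntp}\di^p_j} = (\bm{U}_{\ntp}\di^p_i)^{\top}(\bm{U}_{\ntp}\di^p_j) = (\di^p_i)^{\top}\bm{U}_{\ntp}^{\top}\bm{U}_{\ntp}\di^p_j = (\di^p_i)^{\top}\di^p_j = \scal{\di^p_i}{\di^p_j},
\]
using $\bm{U}_{\ntp}^{\top}\bm{U}_{\ntp}=\bm{I}_p$ which holds since the columns of $\bm{U}_{\ntp}$ are orthonormal. Setting $\di^p_i=\di^p_j=\di^p$ and using $\normii{\di^p}=1$ for $\di^p\in\pollD\subset\spOne$ immediately gives $\normii{\bm{U}_{\ntp}\di^p}=\normii{\di^p}=1$. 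This settles the second part of the statement.

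For the cosine-measure equality, I would parameterize: every $\wb\in\mathrm{range}(\bm{U}_{\ntp})$ can be uniquely written as $\wb=\bm{U}_{\ntp}\yb$ for some $\yb\in\rp$, and the isometry gives $\normii{\wb}=\normii{\yb}$. For any $\ub=\bm{U}_{\ntp}\di^p\in\mathbb{U}^n$, the same computation as above yields $\scal{\wb}{\ub}=\scal{\yb}{\di^p}$, while $\normii{\ub}=\normii{\di^p}$. The map $\wb\mapsto\yb$ is a bijection between $\mathrm{range}(\bm{U}_{\ntp})\setminus\{\bm{0}\}$ and $\rp\setminus\{\bm{0}\}$, and $\di^p\mapsto\bm{U}_{\ntp}\di^p$ is a bijection between $\pollD$ and $\mathbb{U}^n$, so the min–max in the definition of $\kappa(\mathbb{U}^n|_{\mathrm{range}(\bm{U}_{\ntp})})$ matches term by term with that defining $\kappa(\pollD)$:
\[
\kappa(\mathbb{U}^n|_{\mathrm{range}(\bm{U}_{\ntp})}) = \min_{\yb\in\rp\setminus\{\bm{0}\}}\ \max_{\di^p\in\pollD}\frac{\scal{\yb}{\di^p}}{\normii{\yb}\normii{\di^p}} = \kappa(\pollD).
\]
Positivity, $\kappa(\pollD)>0$, follows from the hypothesis that $\pollD$ is a PSS of $\rp$, via the standard equivalence~\eqref{cosineMeasureDef}.

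There is no real obstacle here; the only thing requiring minor care is the bijective change of variables $\wb\leftrightarrow\yb$ so that the cosine-measure minimization restricted to $\mathrm{range}(\bm{U}_{\ntp})$ is faithfully translated into an unconstrained minimization over $\rp\setminus\{\bm{0}\}$. Once that is in place, isometry does all the work and the three claims (cosine measure, inner products, norms) follow in a single stroke, justifying the final assertion that $\pollD$ carries exactly the same positive-spanning properties into $\mathrm{range}(\bm{U}_{\ntp})$ as $\mathbb{U}^n$.
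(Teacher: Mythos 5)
Your proof is correct and follows essentially the same route as the paper: both arguments use $\bm{U}_{\ntp}^{\top}\bm{U}_{\ntp}=\bm{I}_p$ to show that $\di^p\mapsto\bm{U}_{\ntp}\di^p$ preserves inner products and norms, and then substitute $\wb=\bm{U}_{\ntp}\s$ to translate the min--max over ${\rm range}(\bm{U}_{\ntp})$ into the one defining $\kappa(\pollD)$ over $\rp$. Your explicit remark that the change of variables is a bijection is a slight tidying of a point the paper leaves implicit, but the substance is identical.
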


\begin{proof}
We first observe that the vectors $\wb\in\text{range}(\bm{U}_{\ntp})$ and $\ub\in\mathbb{U}^n$ can be rewritten as $\wb=\bm{U}_{\ntp}\s$, for some $\s\in\rp$ and $\ub=\bm{U}_{\ntp}\di^p$ where $\di^p\in\pollD$, which implies that $\kappa(\mathbb{U}^n{|}_{{\text{range}}(\bm{U}_{\ntp})})=\underset{\s\in\rp}{\min}\ \underset{\di^p\in\pollD}{\max}\frac{\scal{\bm{U}_{\ntp}\s}{\,\bm{U}_{\ntp}\di^p}}{\normii{\bm{U}_{\ntp}\s}\normii{\bm{U}_{\ntp}\di^p}}$. Assume that $\s:=(s_1,\dots,s_p)$ and let $\scalebox{0.7}{$\bm{U}_j$}\in \rn$, with $j\in\intbracket{1,p}$, be the $j$th column of $\bm{U}_{\ntp}$. Then $\normii{\bm{U}_{\ntp}\s}^2=\scal{\bm{U}_{\ntp}\s}{\bm{U}_{\ntp}\s}=\sum_{j=1}^{p}s_j^2\normii{\scalebox{0.7}{$\bm{U}_j$}}^2=\normii{\s}^2$ where the last equality follows from the orthonormality of the columns of $\bm{U}_{\ntp}$. Likewise, $\normii{\bm{U}_{\ntp}\di^p}=\normii{\di^p}=1$. Moreover, it holds that $\scal{\bm{U}_{\ntp}\s}{\bm{U}_{\ntp}\di^p}=\scal{\s}{\bm{U}_{\ntp}^{\top}\bm{U}_{\ntp}\di^p}=\scal{\s}{\di^p}$ since $\bm{U}_{\ntp}^{\top}\bm{U}_{\ntp}=\bm{I}_p\in\rpp$. Consequently, $\kappa(\mathbb{U}^n{|}_{{\text{range}}(\bm{U}_{\ntp})})=\underset{\s\in\rp}{\min}\ \underset{\di^p\in\pollD}{\max}\frac{\scal{\s}{\di^p}}{\normii{\s}\normii{\di^p}}=\kappa(\pollD)>0$ where the inequality follows from the fact that $\pollD$ is a positive spanning set of~$\rp$. The proof of the other results trivially follows using the above arguments, which completes the proof.
\end{proof}

\section{Random subspace direct search and its resulting stochastic process}\label{secThree}
This section presents the general framework of the StoDARS algorithm and explains how the proposed method results in a stochastic process.

\subsection{StoDARS algorithm}
In the SDDS framework the algorithms operate in full space by examining poll points of the form $\xk+\dk\di^n$, with $\di^n\in \rn$ and $\dk>0$, where $\xk\in \rn$ is an incumbent solution. In contrast, the proposed StoDARS method operates in affine subspaces $\mathcal{Y}_k$ randomly chosen through the range of matrices $\Ub_{k,\ntp}\in \rnp$ with orthonormal columns, and whose entries are randomly selected. More precisely, inspired by~\cite{RobRoy2022RedSpace} considering trial points of the form $\xk+\dk\Qk\di^p$, with $\di^p\in\rp$, where the columns of $\Qk\in\rnp$ are not necessarily orthonormal, these affine subspaces are defined as
\begin{equation}\label{affSubspaceDef}
\mathcal{Y}_k:={\bl \accolade{\xk+\Ub_{k,\ntp}\di^p:\di^p\in\rp} \supset \accolade{\xk+\dk\Ub_{k,\ntp}\di^p:\di^p\in\pollD_k}=:\mathcal{P}_k},
\end{equation}
where {\bl $\mathcal{P}_k$ denotes the set of poll trial points,} $\pollD_k\subset\spOne$ is a PSS constructed by using realizations of a random matrix $\Drandom_k:=\Drandomtildep_{t_k}\in\rpm$ {\bl (whose columns form a PSS of~$\rp$)}, with $m\in {\bl \intbracket{p+1,2p}}$ and $t_k\in\N$. {\bl We note that a technique for constructing $\pollD_k$ is proposed in Section~\ref{sec8}.} $\Ub_{k,\ntp}$ is a realization of a random matrix $\Urandom_{k,\ntp}:=\Urandomtilde_{t_k,\ntp}$ representing the first~$p$ columns of a Haar-distributed matrix $\Urandomtilde_{t_k}:=\left[\Urandomtilde_{t_k,\ntp}\ \ \Urandomtilde_{t_k,\ntnmp}\right]\in\mathbb{O}(n)$, where~$p$ is chosen so that $\sqrt{\frac{n}{p}}\Urandomtilde_{t_k,\ntp}$ is $W(\alpha_Q,\beta_Q)$ for some $\alpha_Q,\beta_Q\in (0,1/2)$. The independent matrices $\Urandomtilde_{t_k,\ntp}$ and $\Drandomtildep_{t_k}$ are generated so that $\{\Urandomtilde_{t_{i},\ntp}\}_{i=0}^{k}$ are i.i.d and $\{\Drandomtildep_{t_i}\}_{i=0}^{k}$ are independent. The basic idea of combining such matrices and directions through~\eqref{affSubspaceDef} is motivated by the following result and the need for the algorithm to use a (dense) sequence of uniformly distributed vectors on the unit sphere~$\snOne$.

\begin{proposition}\label{unitHaarProp}(Inspired by~\cite[Theorem~1]{BhQPMADS2015})
Let $\Urandom:=\left[\Urandom_{\ntp}\ \, \Urandom_{\ntnmp}\right]\in \mathbb{O}(n)$ be Haar-distributed, where $\Urandom_{\ntp}\in\rnp$. Let $\diRandom\in\spOne$ be a random, possibly degenerate, unit vector {\bl independent of $\Urandom$}; that is, $\diRandom$ is either random or deterministic. Then, $\Urandom_{\ntp}\diRandom\sim\mathcal{U}(\snOne)$; that is, $\Urandom_{\ntp}\diRandom$ is uniformly distributed on the unit sphere $\snOne$.
\end{proposition}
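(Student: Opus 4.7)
The plan is to reduce the claim to the well-known fact, already used in the paper's proof of Theorem~\ref{JLforHaarTheor}, that the first column of a Haar-distributed orthogonal matrix is uniformly distributed on~$\snOne$. The key mechanism is the (right) translation invariance of Haar measure on~$\mathbb{O}(n)$.

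First I would handle the case where $\diRandom = \di\in\spOne$ is deterministic. The observation is that $\Urandom_{\ntp}\di = \Urandom\tilde{\di}$, where $\tilde{\di}:=\begin{pmatrix}\di \\ \bm{0}_{n-p}\end{pmatrix}\in\sNOne$ is the zero-padded extension of $\di$ to $\rn$. Since $\tilde{\di}$ and the first standard basis vector $\bm{e}_1\in\rn$ are both unit vectors, there exists a fixed orthogonal matrix $\bm{O}\in\mathbb{O}(n)$ with $\bm{O}\bm{e}_1=\tilde{\di}$. By right invariance of Haar measure, $\Urandom\bm{O}\equald \Urandom$, and therefore
\[
\Urandom_{\ntp}\di \;=\; \Urandom\tilde{\di} \;=\; (\Urandom\bm{O})\bm{e}_1 \;\equald\; \Urandom\bm{e}_1.
\]
Since $\Urandom\bm{e}_1$ is the first column of a Haar-distributed orthogonal matrix, it is uniformly distributed on $\snOne$ (the fact already invoked right after Lemma~\ref{HaarLemma}), and the deterministic case follows.

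Next I would promote the result to a (possibly) random $\diRandom$, using the implicit assumption of the setup that $\diRandom$ is independent of~$\Urandom$ (which holds in the paper since $\pollD_k$ is built from~$\Drandomtildep_{t_k}$ independently of~$\Urandomtilde_{t_k}$). Conditioning on $\diRandom$, Lemma~\ref{substLemma} (applied to the measurable map $(\bm{d},\bm{M})\mapsto \mathds{1}_{\{\bm{M}_{\ntp}\bm{d}\in \mathscr{E}\}}$ for any Borel set $\mathscr{E}\subseteq\snOne$) yields
\[
\prob{\Urandom_{\ntp}\diRandom\in \mathscr{E}\mid\sigma(\diRandom)} \;=\; \prob{\Urandom_{\ntp}\bm{d}\in \mathscr{E}}\Big|_{\bm{d}=\diRandom}\quad\text{a.s.}
\]
By the deterministic case, the right-hand side equals the Haar measure of $\mathscr{E}$ on $\snOne$ for every $\bm{d}\in\spOne$, so it is constant in $\diRandom$. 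Taking expectations gives $\prob{\Urandom_{\ntp}\diRandom\in \mathscr{E}}$ equal to that same Haar measure, which is exactly the claim $\Urandom_{\ntp}\diRandom\sim\mathcal{U}(\snOne)$.

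The main technical point — really the only one requiring care — is the rotation step: producing the orthogonal matrix $\bm{O}$ that sends $\bm{e}_1$ to $\tilde{\di}$ and invoking right-invariance. Everything else (the uniform distribution of the first column of $\Urandom$, and the conditioning argument for random $\diRandom$) is either a directly quoted fact from earlier in the paper or a routine application of Lemma~\ref{substLemma}. No differentiability, concentration, or finite-dimensional structure beyond the invariance of Haar measure is needed.
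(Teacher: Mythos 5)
Your proof is correct, but it takes a genuinely different route from the paper's. The paper argues via \emph{left}-invariance: for every fixed $\bm{O}\in\mathbb{O}(n)$ one has $\bm{O}(\Urandom\vrandom)=(\bm{O}\Urandom)\vrandom\equald\Urandom\vrandom$ with $\vrandom$ the zero-padded direction, so the law of $\Urandom\vrandom$ is invariant under all rotations of $\rn$ and must therefore be the uniform measure on $\snOne$; this handles deterministic and random $\diRandom$ in a single stroke. You instead use \emph{right}-invariance to write $\Urandom\tilde{\di}=(\Urandom\bm{O})\bm{e}_1\equald\Urandom\bm{e}_1$ and reduce to the already-quoted fact that the first column of a Haar matrix is uniform on $\snOne$, then lift to random $\diRandom$ by conditioning through Lemma~\ref{substLemma}. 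The paper's version is shorter and needs no case split, but it does silently rely on the uniqueness of the rotation-invariant probability measure on the sphere, and its one-line equality $(\bm{O}\Urandom)\vrandom\equald\Urandom\vrandom$ is only justified for random $\vrandom$ when $\vrandom$ is independent of $\Urandom$ --- a hypothesis the proposition statement never makes explicit. Your treatment surfaces exactly that point: independence is genuinely necessary (e.g., with $p=n$ and $\diRandom:=\Urandom^{\top}\bm{e}_1$ one gets $\Urandom_{\ntp}\diRandom=\bm{e}_1$, which is not uniform), and it does hold in Algorithm~\ref{algoStoDARS} since $\Drandomtildep_{t_k}$ is generated independently of $\Urandomtilde_{t_k,\ntp}$. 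So your argument is slightly longer but more rigorous on the random-direction case; the only blemish is the typo $\tilde{\di}\in\sNOne$, which should read $\tilde{\di}\in\snOne$.
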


\begin{proof}
Consider $\vrandom:=
\begin{bmatrix}
\diRandom\\
\bm{0}_{n-p}
\end{bmatrix}\in \snOne$, where $\bm{0}_{n-p}:=(0,\dots,0)^{\top}\in\R^{n-p}$. By left-translation invariance of Haar measure {\bl and because $\vrandom$ and $\Urandom$ are independent}, it holds that for all $\bm{O}\in\mathbb{O}(n)$, $\bm{O}(\Urandom\vrandom)=(\bm{O}\Urandom)\vrandom\equald\Urandom\vrandom$, whence $\Urandom\vrandom\sim\mathcal{U}(\snOne)$. Then the proof is completed by observing that $\Urandom\vrandom=\left[\Urandom_{\ntp}\ \, \Urandom_{\ntnmp}\right]\begin{bmatrix}
\diRandom\\
\bm{0}_{n-p}
\end{bmatrix}=\Urandom_{\ntp}\diRandom$.
\end{proof} 
On the other hand, the idea of assigning an integer index~$t_k$ to every iteration~$k$, as was also the case in~\cite{BhQPMADS2015,VDAs2013} inspired by~\cite{AbAuDeLe09}, is motivated by the need to demonstrate in Proposition~\ref{densityProp} the existence of an almost surely dense subsequence $\accoladekinK{\ukRandom}$ of poll directions (where $K\subset\N$ and $\ukRandom:=\Urandom_{k,\ntp}\dikRandom$ with $\dikRandom\in \pollDrandom_k$) on the unit sphere~$\snOne$, as required by the analyses presented in Sections~\ref{sec6} and~\ref{sec7}. Indeed, while it is well known from probability theory that a sequence $\accoladekinN{\ukRandom}$ of i.i.d. random vectors $\ukRandom~\sim\mathcal{U}(\snOne)$ is dense on~$\snOne$ with probability one, the latter property is not guaranteed to hold for any subsequence $\accoladekinK{\ukRandom}$ for an arbitrary random subset $K\subset \N$. However, by updating $t_k$ so that $\accoladekinK{t_k}$ covers the whole set $\accolade{0,1,2,\dots}$ when $K$ is a subset of unsuccessful iterations (the latter are defined similarly to those of SDDS) of Algorithm~\ref{algoStoDARS} as illustrated by Table~\ref{tablktk}, $\accoladekinK{\ukRandom}$ is shown in Proposition~\ref{densityProp} to be dense on~$\snOne$ with probability one.

During the poll step of each iteration~$k$, $\ef$-accurate estimates $\fok\approx f(\xk)$ and $\fuk\approx f(\xk+\dk\ub)$ (see Definition~\ref{epsAccDef0}) are computed, respectively, at the incumbent solution~$\xk$ and the trial points~{\bl $\mathcal{P}_k$.}
The result stated next determines the iteration type, that is, whether it is successful or not.
\begin{definition}\label{epsAccDef0}
Given $\ef>0$, $\xk\in\rn$, and $\ub:=\Ukp\di^p=\sqrt{\frac{p}{n}}\Qk\di^p\in\rn$ with $\di^p\in\pollD_k$, $\fok$ and $\fuk$ are called $\ef$-accurate estimates of $f(\xk)$ and $f(\xk+\dk\ub)$, respectively, for a given $\dk$ if
\begin{equation}\label{epsAccDef1}
\abs{\fok-f(\xk)}\leq \ef\dk^2\qquad \mbox{and}\qquad \abs{\fuk-f(\xk+\dk\ub)}\leq \ef\dk^2.
\end{equation}
\end{definition}
\begin{proposition}\label{decreaseProp1}(Inspired by~\cite[Proposition~1]{dzahini2020expected})
Let $\gamma>2$ be a constant, and let $\fok$ and $\fuk$ be $\ef$-accurate estimates of $f(\xk)$ and $f(\xk+\dk\ub)$, respectively: 
\begin{equation}\label{decreaseEqu}
\mbox{If}\quad \fuk-\fok\leq -\gamma\ef\dk^2,\qquad\mbox{then}\qquad f(\xk+\dk\ub)-f(\xk)\leq -(\gamma-2)\ef\dk^2.
\end{equation}
\end{proposition}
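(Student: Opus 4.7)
The plan is a short, direct manipulation of the two accuracy inequalities in Definition~\ref{epsAccDef0} combined with the hypothesis $\fuk-\fok\leq -\gamma\ef\dk^2$. There is no real obstacle here: the statement is essentially a triangle-inequality bookkeeping argument, and $\gamma>2$ is used exactly to absorb the $2\ef\dk^2$ slack coming from the two estimates.

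First I would rewrite the $\ef$-accuracy bounds \eqref{epsAccDef1} in one-sided form, keeping only the inequalities needed to push $f(\xk+\dk\ub)-f(\xk)$ upward. Specifically, from $\abs{\fuk-f(\xk+\dk\ub)}\leq \ef\dk^2$ I extract $f(\xk+\dk\ub)\leq \fuk+\ef\dk^2$, and from $\abs{\fok-f(\xk)}\leq \ef\dk^2$ I extract $-f(\xk)\leq -\fok+\ef\dk^2$ (equivalently, $f(\xk)\geq \fok-\ef\dk^2$).

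Next I would add these two one-sided bounds to get
\[
f(\xk+\dk\ub)-f(\xk)\;\leq\;\bigl(\fuk-\fok\bigr)+2\ef\dk^2.
\]
Applying the hypothesis $\fuk-\fok\leq -\gamma\ef\dk^2$ on the right-hand side then yields
\[
f(\xk+\dk\ub)-f(\xk)\;\leq\;-\gamma\ef\dk^2+2\ef\dk^2\;=\;-(\gamma-2)\ef\dk^2,
\]
which is the desired conclusion; the factor $\gamma-2$ is strictly positive since $\gamma>2$, so this indeed certifies a genuine decrease in $f$. This completes the proof in essentially three lines, and no probabilistic machinery, no properties of the subspace matrices $\Ukp$, and no positive-spanning structure of $\pollD_k$ is needed at this step — the result is a purely deterministic consequence of the accuracy definition and the sufficient-decrease test, which is precisely why it can be cited later (in the success/unsuccessful dichotomy of Algorithm~\ref{algoStoDARS}) independently of all the subspace-specific analysis developed in Section~\ref{sec:SDSS}.
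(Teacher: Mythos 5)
Your proof is correct and is exactly the standard triangle-inequality argument that the paper invokes by deferring to~\cite[Proposition~1]{dzahini2020expected}; the one-sided bounds you extract from~\eqref{epsAccDef1} and the $2\ef\dk^2$ slack absorbed by $\gamma>2$ are precisely the content of that reference's proof. No gaps, and your closing remark that the result is purely deterministic and independent of the subspace machinery is also accurate.
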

  
\begin{proof}
The proof trivially follows that of~\cite[Proposition~1]{dzahini2020expected} with minor changes.
\end{proof}



\begin{algorithm}[H]
\caption{StoDARS algorithm}
\label{algoStoDARS} 
\textbf{[0] Initialization}\\
\hspace*{10mm}Choose $\x_0\in\bcX\subseteq\rn$, $\delta_0>0$, $\ef>0$, $\gamma>2$, $0<\tau<\left(\frac{\gamma-2}{\gamma+2}\right)^{1/2}$, $j_{\max}\in\N$, $\dmax=\tau^{-j_{\max}}\delta_0$ \\ 
\hspace*{10mm}$\ell_0=0$ and $t_0=0$. Generate $\tilde{\Ub}_{0,\ntp}$, where $\Urandomtilde_0:=\left[\Urandomtilde_{0,\ntp}\ \ \Urandomtilde_{0,\ntnmp}\right]\in\mathbb{O}(n)$ is Haar-distributed, \\
\hspace*{10mm}with $p\in\intbracket{1,n}$ chosen so that $\sqrt{\frac{n}{p}}\Urandomtilde_{0,\ntp}\in\rnp$ is $WAM(\alpha_Q,\beta_Q)$ for some $\alpha_Q, \beta_Q\in(0,1/2)$.\\
\hspace*{10mm}Construct a PSS $\tilde{\mathbb{D}}^p_0\subset\spOne$ using {\bl any matrix} $\Drandomtildep_0\in\R^{p\times m}$, with $m\in {\bl \intbracket{p+1,2p}}$, {\bl generated}\\ 
\hspace*{10mm}{\bl independently of $\Urandomtilde_{0}$, whose columns form a PSS of~$\rp$}.\\
\hspace*{10mm}Set the iteration counter $k \gets 0$.\\
\textbf{[{1}] Matrix and PSS for subspace polling}\\
\hspace*{10mm}Generate $\tilde{\Ub}_{t_k,\ntp}$ as a realization of $\Urandomtilde_{t_k,\ntp}$, with $\{\Urandomtilde_{t_i,\ntp}\}_{i=0}^{k}$ {\bl obtained from the i.i.d. $\{\Urandomtilde_{t_i}\}_{i=0}^{k}$}.\\
\hspace*{10mm}Construct a PSS $\tilde{\mathbb{D}}^p_{t_k}\subset\spOne$ using the columns of $\Drandomtildep_{t_k}$, where the independent $\{\Drandomtildep_{t_i}\}_{i=0}^{k}$ {\bl are generated}\\ 
\hspace*{10mm}{\bl independently of $\{\Urandomtilde_{t_i}\}_{i=0}^{k}$}.  \\
\hspace*{10mm}Define $\Ukp:=\tilde{\Ub}_{t_k,\ntp}$ and $\pollD_k:=\tilde{\mathbb{D}}^p_{t_k}$.\\
\textbf{[{2}] Poll}\\
\hspace*{10mm}Generate a set of poll points $\mathcal{P}_k:=\{\xk+\dk\ub: \ub\in\mathbb{U}_k^n \}$; where $\mathbb{U}_k^n:=\{\Ukp\di^p:\di^p\in \pollD_k\}$, {\bbl and}\\
\hspace*{10mm}{\bbbl $\pollD_k$ is generated independently of $\Ukp$}.\\
\hspace*{10mm}Obtain estimates\footnotemark $\fuk$ and $\fok$ of $f$, respectively, at ${\xk+\dk\ub}\in\mathcal{P}_k$ and $\xk$.\\
\hspace*{10mm}If $\Pkfeas:=\Pk\cap\bcX\neq\emptyset$ and $\fuk -\fok \leq -\gamma \ef\dk^2$ for some ${\xk+\dk\ub}\in\Pkfeas$ (\textbf{successful})\\ 
\hspace*{16mm}set $\xkun\gets \xk+\dk\ub$,  $\dkun\gets \min\{\tau^{-1}\dk,\dmax\}$, and $\ell_{k+1}\gets \ell_k-1$. \\
\hspace*{10mm}Otherwise (\textbf{unsuccessful}) set $\xkun\gets \xk$, $\delta_{k+1}\gets\tau\dk$, and $\ell_{k+1}\gets \ell_k+1$. \\
\textbf{[3] Matrix index update}\\
\hspace*{10mm}If $\ell_{k+1} \geq  \max_{i\leq k} \ell_i$, set $t_{k+1}\gets \ell_{k+1}$.   \\
\hspace*{10mm}Otherwise $t_{k+1}\gets 1+\max_{i\leq k} t_i$.\\
\textbf{[4] Termination}\\
\hspace*{10mm}If no termination criterion is met, set $k\gets k+1$ and go to \textbf{[1]}.\\
\hspace*{10mm}Otherwise stop.
\end{algorithm}
\footnotetext{In practice, there is no need for computing estimates  whenever $\Pkfeas:=\Pk\cap\bcX=\emptyset$. Here for theoretical purposes only,~$\fok$ and~$\fuk$ are always computed in order for~$\Fok$ and~$\Fuk$ to exist for the definition of the $\sigma$-algebras~$\falgebra$, for all $k\in\N$.}

According to Proposition~\ref{decreaseProp1}, if $\fuk-\fok\leq -\gamma\ef\dk^2$ for some feasible $\xk+\dk\ub\in \mathcal{P}_k\cap \bcX$, then the iteration is successful. In that case, the incumbent solution and the stepsize parameter are respectively updated according to $\xkun=\xk+\dk\ub$ and $\dkun=\min\accolade{\tau^{-1}\dk,\dmax}$, for some $\dmax$ defined as $\dmax:=\tau^{-j_{\max}}\delta_0$, with $j_{\max}\in \N$ for the needs of the theoretical analysis presented in Section~\ref{sec5}, where inspired by~\cite[Section~2.1]{dzahini2020expected}, $0<\tau<\left(\frac{\gamma-2}{\gamma+2}\right)^{1/2}$. On the other hand, when no trial point is feasible or if the condition $\fuk-\fok\leq -\gamma\ef\dk^2$ does not hold for all points in $\mathcal{P}_k$, then the iteration is unsuccessful, in which case~$\xk$ is not updated while $\dkun=\tau\dk$. Following~\cite{BhQPMADS2015}, the index $t_k$ is updated at the end of each iteration by introducing a sequence $\accoladekinN{\ell_k}$ initialized with $\ell_0=0$, with~$\ell_k$ updated according to $\ell_{k+1}=\ell_k-1$ on successful iterations and $\ell_{k+1}=\ell_k+1$ on unsuccessful ones, so that $\ell_k\to \infty$ when $\dk\to 0$. Then, at the end of each iteration, regardless of the iteration type, $t_{k+1}$ is equal to $\ell_{k+1}$ if $\ell_{k+1} \geq \max_{i\leq k}\ell_i$ and is equal to $1+\max_{i\leq k} t_i$ otherwise.

$ $\\
\begin{table}[ht!]
\caption{Illustration of possible values of $t_k$ and $\ell_k$}
\label{tablktk}
\centering
\resizebox{\textwidth}{!}{
\begin{tabular}{@{}lccccccccccccccc@{}}
	\cmidrule(l){2-16}
	& \multicolumn{15}{c}{Iteration  number $k\quad / \quad$ Outcome ($S$ for success and $F$ for failure)}                                               \\ \cmidrule(l){2-16} 
	& $\ 0/S$ & $\ 1/S$ & $\ 2/S$ & $\ 3/F$ & $\ 4/F$ & $\ 5/S$ & $\ 6/F$ & $\ 7/F$ & $\ 8/F$ & $\ 9/F$ & $10/F$ & $11/S$ & $12/F$ & $13/S$ & $14/S$ \\ \midrule
	$\ell_{k+1}  $ & $-1$   & $-2$   & $-3$   & $-2$   & $-1$   & $-2$   & $-1$   & $0$    & $1$    & $2$    & $3$     & $2$     & $3$     & $2$     & $1$     \\
	$t_{k+1}  $    & $1$    & $2$    & $3$    & $4$    & $5$    & $6$    & $7$    & $0$    & $1$    & $2$    & $3$     & $8$     & $3$     & $9$     & $10$    \\ \bottomrule
\end{tabular}
}
\end{table}

\begin{table}[ht!]
\centering
\resizebox{\textwidth}{!}{
\begin{tabular}{@{}lccccccccccccccc@{}}
	\cmidrule(l){2-16}
	& \multicolumn{15}{c}{Iteration  number $k\quad / \quad$ Outcome ($S$ for success and $F$ for failure)}                                \\ \cmidrule(l){2-16} 
	& $15/S$ & $16/F$ & $17/F$ & $18/S$ & $19/F$ & $20/F$ & $21/S$ & $22/F$ & $23/F$ & $24/S$ & $25/F$ & $26/F$ & $27/S$ & $28/F$ & $29/F$ \\ \midrule
	$\ell_{k+1} $ & $0$    & $1$    & $2$    & $1$    & $2$    & $3$    & $2$    & $3$    & $4$    & $3$    & $4$    & $5$    & $4$    & $5$    & $6$    \\
	$t_{k+1} $    & $11$   & $12$   & $13$   & $14$   & $15$   & $3$    & $16$   & $3$    & $4$    & $17$   & $4$    & $5$    & $18$   & $5$    & $6$    \\ \bottomrule
\end{tabular}
}
\end{table}

\begin{remark}\label{Referee1Remark}
{\bl We note that each time $\ell_{k+1}$ is greater than or equal to all previous values $\ell_0, \ldots,\ell_k$, it updates $t_{k+1}=\ell_{k+1}$, creating a new ``record'' value in $t_k$. But such a situation can only happen during an unsuccessful iteration (where~$\dk$ is decreased) since only those iterations increase~$\ell_k$. 
As a consequence of the updating strategy, $t_{k+1}=\ell_{k+1}=0,1,2,\dots$ at the respective times that $\ell_k$ is equal to the max or sets a new max, hence there exists a subsequence~$K$ such that $\accolade{t_{k}}_{k\in K}$ covers the entire set $\accolade{0,1,2,\dots}$ as $\dk\to 0$.
}
\end{remark}

{\bbbl The next proposition concerns only unsuccessful iterations.}
\begin{proposition}\label{densityProp}
{\bbbl Let $K\subseteq K_{\infty}$ be such that $\accolade{t_{k}}_{k\in K}=\N$ and consider the set $\UsetDrandom_k^n:=\{\Ukprandom\diRandom:\diRandom\in \pollDrandom_k\}\subset\snOne$ of poll directions generated by Algorithm~\ref{algoStoDARS} at iteration~$k$, where $\pollDrandom_k$ is generated independently of $\Ukprandom$. Then the event $\mathscr{E}_\mathbb{U}:=\accolade{\overline{\bigcup_{k\in K}\UsetDrandom_k^n}=\snOne}$ is almost sure\footnote{{\bbbl Here, $\overline{\mathscr{A}}$ represents the closure of set $\mathscr{A}$, that is, the smallest closed set that contains $\mathscr{A}$.}}, that is, $\bigcup_{k\in K}\UsetDrandom_k^n$ is dense on the unit sphere $\snOne$ with probability one.
}
\end{proposition}

\begin{proof}
{\bll By construction in Algorithm~\ref{algoStoDARS}, the random objects used at iteration~$k$ are obtained from a pre-generated sequence indexed by~$t\in\N$. More precisely, $\Ukprandom:=\Urandomtilde_{t_k,\ntp}$ and $\pollDrandom_k:=\pollDrandomtilde_{t_k}$. The pairs $\left(\Urandomtilde_{t,\ntp},\pollDrandomtilde_t\right)$, $t\in\N$, are mutually independent, and for every fixed $t\in\N$, the set $\pollDrandomtilde_t$ is generated independently of $\Urandomtilde_{t,\ntp}$. Fix an index $j_0\in\intbracket{1,m}$, e.g., $j_0=1$. For each $t\in\N$, let $\diRandomtilde_{t,j_0}^p\in \pollDrandomtilde_t\subset \spOne$ denote the $j_0$th column of $\Drandomtildep_t$ (the matrix with columns $\pollDrandomtilde_t$), and define $\urandomtilde_{t,j_0}:=\Urandomtilde_{t,\ntp}\diRandomtilde_{t,j_0}^p\in \snOne$. Since $\Urandomtilde_{t,\ntp}$ is obtained from the first~$p$ columns of a Haar-distributed orthogonal matrix and since $\diRandomtilde_{t,j_0}^p$ is a unit vector independent of $\Urandomtilde_{t,\ntp}$, it follows from Proposition~\ref{unitHaarProp} that $\urandomtilde_{t,j_0}\sim \mathcal{U}(\snOne)$ for all~$t\in\N$. Moreover, because the pairs $\left(\Urandomtilde_{t,\ntp},\pollDrandomtilde_t\right)$, $t\in\N$, are mutually independent, the random vectors $\urandomtilde_{t,j_0}$, $t\in\N$, are independent. Hence, $\accolade{\urandomtilde_{t,j_0}}_{t=0}^\infty$ is a sequence of i.i.d. random vectors uniformly distributed on $\snOne$, and therefore is almost surely dense on $\snOne$.} 

As emphasized in~\cite{BhQPMADS2015}, the update rule (inspired by~\cite[Eqs.~(29) and~(30)]{BhQPMADS2015}) of $t_k$ and $\ell_k$ in Algorithm~\ref{algoStoDARS}, illustrated by Table~\ref{tablktk}, guarantees the existence of a random subset $K$ of unsuccessful iterations {\bl (see also Remark~\ref{Referee1Remark})} such that $\accolade{t_{k}}_{k\in K}$ covers the entire set $\accolade{0,1,2,\dots}$ as $\dk\to 0$. {\bll Thus, for every $t\in\N$, there exists at least one $k\in K$ such that $t_k=t$. For such a~$k$, the construction in Algorithm~\ref{algoStoDARS} gives $\Ukprandom=\Urandomtilde_{t_k,\ntp}=\Urandomtilde_{t,\ntp}$ and $\pollDrandom_k=\pollDrandomtilde_{t_k}=\pollDrandomtilde_t$. If $\dikjoRandom\in \pollDrandom_k$ denotes the $j_0$th column of $\Drandom_k$ (the matrix with columns~$\pollDrandom_k$), then $\dikjoRandom=\diRandomtilde_{t_k,j_0}^p=\diRandomtilde_{t,j_0}^p$. Consequently,
$\ukjoRandom:=\Ukprandom\dikjoRandom=\Urandomtilde_{t_k,\ntp}\diRandomtilde_{t_k,j_0}^p=:\urandomtilde_{t_k,j_0}$. Since $\accolade{t_k}_{k\in K}=\N$, then $\accolade{\ukjoRandom:k\in K}=\accolade{\urandomtilde_{t,j_0}:t\in\N}$, hence $\prob{\overline{\accolade{\ukjoRandom:k\in K}}=\snOne}=1$.} We finally note that for every $k\in K$, $\ukjoRandom\in \UsetDrandom_k^n$ and the proof is completed by observing that $\accolade{\ukjoRandom:k\in K}\subseteq \bigcup_{k\in K}\UsetDrandom_k^n$.
\end{proof}

\begin{remark}
{\bll 
We note that the essential point of the proof of Proposition~\ref{densityProp} uses the independence and uniform distribution of $\accolade{\urandomtilde_{t,j_0}}_{t=0}^\infty$ resulting from the pre-generated sequence $\accolade{\left(\Urandomtilde_{t,\ntp},\pollDrandomtilde_t\right)}_{t\in \N}$, before any conditioning on unsuccessful iterations. Thus, it does not claim that the random directions remain independent or uniformly distributed after conditioning on the event that the corresponding iterations are unsuccessful, which would generally be false since conditioning on failure or success may bias the distribution of the selected directions and destroy their independence. This is precisely the role of the $t_k$-mechanism. Rather than relying on distributional properties after conditioning on unsuccessful iterations, the update rule for~$t_k$ ensures that, along $K\subseteq K_{\infty}$, we have the coverage property $\accolade{t_k}_{k\in K}=\N$. Consequently, the poll directions generated on these unsuccessful iterations contain the pre-generated i.i.d. uniformly distributed sequence $\accolade{\urandomtilde_{t,j_0}}_{t=0}^\infty$. The density conclusion therefore follows from the almost sure density of this underlying pre-generated sequence together with the aforementioned coverage property. In particular, density is established through the pathwise coverage of the original pre-generated sequence along unsuccessful iterations, rather than through distributional properties that need not be preserved after conditioning on success or failure.
}
\end{remark}

\subsection{Stochastic process generated by StoDARS}
The deterministic estimates $\fok$ and $\fuk$ are constructed at each iteration of Algorithm~\ref{algoStoDARS} by using evaluations of the stochastically noisy function $f_{\ubar{\theta}}$ and the randomly chosen entries of the matrices $\Ukprandom$ and $\Drandom_k$. Consequently, such function estimates can be considered as realizations of random estimates $\Fok$ and $\Fuk$ whose behavior necessarily influences each iteration of the method. The latter therefore results in a stochastic process. The goal of the present work is to show that under certain assumptions on the sequence $\accoladekinN{\Fok,\Fuk}$ and the matrices used for subspace polling, the resulting process has some desirable convergence properties, conditioned on the past history of the algorithm. In particular, inspired by~\cite{audet2019stomads,blanchet2016convergence,chen2018stochastic,dzahini2020expected,DzaWildSub2022,paquette2018stochastic}, these random estimates are required to be probabilistically sufficiently accurate, conditioned on the past. On the other hand, although the matrices used for subspace polling are clearly defined in Algorithm~\ref{algoStoDARS}, the following is explicitly assumed in the remainder of the manuscript for the sake of clarity.

\begin{assumption}\label{QkHaarAssumpt}{\bl (Motivated by Theorem~\ref{JLforHaarTheor})}
For all $k\geq 0$, the independent random matrices  $\Drandom_k\in\R^{p\times m}$, with $m\in {\bl \intbracket{p+1,2p}}$, and $\Ukprandom\in\rnp$ used for subspace polling in Algorithm~\ref{algoStoDARS} are such that $\pollDrandom_k\subset\spOne$, $\Urandom_k:=\left[\Urandom_{k,\ntp}\ \, \Urandom_{k,\ntnmp}\right]\in \mathbb{O}(n)$ is Haar-distributed, and $\QkRandom:=\sqrt{\frac{n}{p}}\Ukprandom$ is $WAM(\alpha_Q,\beta_Q)$ for some $\alpha_Q, \beta_Q\in(0,1/2)$.
\end{assumption}

Next, $(\QkRandom)_{ij}$, with $(i,j)\in\mathbb{I}_{\ntp}:=\intbracket{1,n}\times\intbracket{1,p}$, denote the entries of $\QkRandom$. Likewise, $(\Drandom_k)_{i'j'}$, with $(i',j')\in\mathbb{I}_{p,m}$, denote the entries of $\pollDrandom_k$. The conditioning on the past is formalized by using the filtrations $\accoladekinN{\falgebra}$ and $\accoladekinN{\fqdalgebra}$ defined by
\begin{eqnarray*}
\falgebra&:=&\sigma\left({\ubar{f}}_{\ell}({\bm{0}}), {\ubar{f}}_{\ell}(\urandom),(\Qrandom_\ell)_{ij}, (\Drandom_\ell)_{i'j'}\ \mbox{with}\ \ell\in\intbracket{0,k-1}, (i,j)\in\mathbb{I}_{\ntp}\ \mbox{and}\ (i',j')\in\mathbb{I}_{p,m}\right)\\
\fqdalgebra&:=&\sigma\left({\ubar{f}}_{\ell}({\bm{0}}), {\ubar{f}}_{\ell}(\urandom),(\Qrandom_\ell)_{ij}, (\Drandom_\ell)_{i'j'},(\QkRandom)_{ij}, (\Drandom_k)_{i'j'} \ \mbox{with}\ \ell\in\intbracket{0,k-1}, (i,j)\in\mathbb{I}_{\ntp}\ \mbox{and}\ (i',j')\in\mathbb{I}_{p,m}\right),
\end{eqnarray*}
where for {\it completeness}~\cite{billingsley1995ThirdEdition}, $\F_{-1}:=\sigma(\x_0)$. The idea behind the above definitions is explained in~\cite[Remark~2.1]{DzaWildSub2022}. By construction, $\QkRandom$ and $\Drandom_k$ are $\fqdalgebra$-measurable so that $\E{\QkRandom|\fqdalgebra}=\QkRandom$ almost surely and $\E{\Drandom_k|\fqdalgebra}=\Drandom_k$ almost surely. The sufficient accuracy of the random estimates is formalized as follows.

\begin{definition}
A sequence $\accolade{\Fok,\Fuk}$ of random estimates is called $\beta_f$-probabilistically $\ef$-accurate with respect to the corresponding sequence $\accolade{\Xk,\Dk,\QkRandom,\Drandom_k}$ if the events
\begin{eqnarray*}
J_k&:=&\left\lbrace \mbox{Given}\ \QkRandom, \Drandom_k\ \mbox{and}\ \Xk, \Fok\ \mbox{and}\ \Fuk\ \mbox{are}\ \ef\mbox{-accurate}\ \mbox{estimates of}\ f(\Xk)\ \mbox{and} \right.\\
& & \qquad\qquad\qquad\qquad\qquad\qquad\qquad\qquad\qquad\qquad\left. f(\Xk+\Dk\urandom),\ \mbox{respectively, for}\ \Dk \right\rbrace
\end{eqnarray*}
satisfy the condition $\prob{J_k|\fqdalgebra}=\E{\ijk|\fqdalgebra}\geq \beta_f$ almost surely. The estimates $\Fok$ and $\Fuk$ are called ``good'' if $\ijk=1$. Otherwise, they are called ``bad.''
\end{definition}

\section{Zeroth-order convergence}\label{sec4}
In a deterministic directional direct-search framework, global convergence properties strongly rely on the fact that the objective function~$f$ never increases between successive iterations and that the stepsize $\dk$ tends to zero as $k\to\infty$. As highlighted in~\cite{dzahini2020expected}, the main difficulty in the analysis of stochastic direct-search methods lies in the fact that the monotonicity described above is no longer always guaranteed. The analysis of StoDARS therefore relies on the assumption that function estimates' accuracy improves in coordination with the perceived progress of the method. It uses some properties of a specific supermartingale~$\accoladekinN{\Phik}$ whose increments not only have a tendency to decrease but also depend on the change in the values of~$f$ between successive iterations. The goal of the present section is to derive the  {\it zeroth-order convergence} result; that is, the sequence~$\accoladekinN{\Dk}$ converges to zero almost surely, which is crucial for deriving all {\it almost sure} convergence results afterward. Inspired by~\cite[Assumption~2]{dzahini2020expected} and~\cite[Assumption~2.4]{paquette2018stochastic}, the following assumption similar to~\cite[Assumption~2]{audet2019stomads} is made.


\begin{assumption}\label{efkfAssump}
For fixed $\ef>0$, {\bl $\gamma>2$, $\nu,\tau\in (0,1)$ satisfying $\frac{\nu}{1-\nu}\geq \frac{2(\tau^{-2}-1)}{\gamma-2}$} and $\beta_f\in ({\bl 1/2},1)$ {\bl satisfying  $\frac{\beta_f}{1-\beta_f}\geq \frac{4\nu}{(1-\nu)(1-\tau^2)}$}, the following hold for the random quantities generated by Algorithm~\ref{algoStoDARS}:
\begin{itemize}
\item[(i)] The sequence $\accolade{\Fok,\Fuk}$ is $\beta_f$-probabilistically $\ef$-accurate.
\item[(ii)] The sequence $\accolade{\Fok,\Fuk}$ satisfies the following $\kappa_f$-variance conditions, each of which holds almost surely:
\[\E{\abs{\Fok-f(\Xk)}^2|\fqdalgebra}\leq \ef^2(1-\beta_f)\Dk^4\quad\mbox{and}\quad \E{\abs{\Fuk-f(\Xk+\Dk\urandom)}^2|\fqdalgebra}\leq \ef^2(1-\beta_f)\Dk^4.\]
\end{itemize}
\end{assumption}

Here,~{\it (i)} assumes that the function estimates are increasingly better as the stepsize gets smaller, with some sufficiently large, but fixed, probability. This means that they might be overall biased and arbitrarily inaccurate with small probability, thus allowing for outliers when estimating unknown~$f$ function values. On the other hand, the variance of these estimates is adaptively controlled through~{\it (ii)} and decreases~$\Dk$. In particular, the almost sure convergence of~$\accoladekinN{\Dk}$ to zero drives the variance to zero. {\bl Details on how random estimates satisfying Assumption~\ref{efkfAssump} can be made available using, e.g., the standard Chebyshev inequality, are provided in~\cite[Section~5]{chen2018stochastic}, \cite[Section~2.3]{paquette2018stochastic}, \cite[Section~3.5]{blanchet2016convergence}, \cite[Section~5.1]{dzahini2020constrained}, ~\cite[Section~2.2]{dzahini2020expected} and~\cite[Section~2.3]{audet2019stomads}. We note that in particular, the sample size (i.e., the number of noisy function values averaged to compute the estimates) is $\mathcal{O}\left( \Dk^{-4}\right)$.}

The following result from~\cite{dzahini2020expected} is inspired by~\cite[Lemma~2.5]{paquette2018stochastic} and shows the relationship between the above variance assumption and the probability of obtaining bad estimates. It is crucial for the proof of Theorem~\ref{zerothOrdTheor}.

\begin{lemma}\label{SDDSlemma1}\cite[Lemma~1]{dzahini2020expected}
Under Assumption~\ref{efkfAssump}, the following hold for all $k\geq 0$ for the random process generated by Algorithm~\ref{algoStoDARS}:
\begin{eqnarray*}
\E{\ijkc\abs{\Fok-f(\Xk)}|\fqdalgebra}&\leq& \ef(1-\beta_f)\Dk^2\quad\mbox{almost surely, and}\\
\E{\ijkc\abs{\Fuk-f(\Xk+\Dk\urandom)}|\fqdalgebra}&\leq& \ef(1-\beta_f)\Dk^2\quad\mbox{almost surely.}
\end{eqnarray*} 
\end{lemma}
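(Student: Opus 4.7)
The plan is to prove both inequalities by invoking the conditional Cauchy--Schwarz inequality, using the indicator $\ijkc$ as one factor and the absolute deviation of the estimate from the true value as the other. The $\beta_f$-probabilistic $\ef$-accuracy assumption controls the first factor, while the variance condition controls the second, and the two combine to produce exactly the $\ef(1-\beta_f)\Dk^2$ bound claimed.

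More concretely, for the first inequality I would write
\[
\E{\ijkc\,|\Fok-f(\Xk)|\,\big|\,\fqdalgebra}
\;\leq\;\sqrt{\E{\ijkc^2\,\big|\,\fqdalgebra}}\;\cdot\;\sqrt{\E{|\Fok-f(\Xk)|^2\,\big|\,\fqdalgebra}}
\]
by the conditional Cauchy--Schwarz inequality. Since $\ijkc$ is an indicator, $\ijkc^2=\ijkc$, so
\[
\E{\ijkc^2\,\big|\,\fqdalgebra}=\E{\ijkc\,\big|\,\fqdalgebra}=1-\E{\ijk\,\big|\,\fqdalgebra}\leq 1-\beta_f
\]
almost surely, where the inequality uses Assumption~\ref{efkfAssump}(i). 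Using Assumption~\ref{efkfAssump}(ii) to bound the second factor by $\ef^2(1-\beta_f)\Dk^4$ and taking square roots yields the desired bound $\ef(1-\beta_f)\Dk^2$ almost surely. The second inequality involving $\Fuk$ and $f(\Xk+\Dk\urandom)$ follows by exactly the same reasoning, invoking the corresponding variance bound of Assumption~\ref{efkfAssump}(ii) on $|\Fuk-f(\Xk+\Dk\urandom)|^2$.

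There is no real obstacle here; the argument is essentially a textbook Cauchy--Schwarz computation. The only minor point worth double-checking is that conditional Cauchy--Schwarz applies without requiring $\ijkc$ to be $\fqdalgebra$-measurable (which it is not, since $J_k$ depends on the future random estimates $\Fok,\Fuk$): the inequality $|\E{XY\,|\,\mathcal{G}}|^2\leq \E{X^2\,|\,\mathcal{G}}\,\E{Y^2\,|\,\mathcal{G}}$ holds for arbitrary square-integrable $X,Y$ and any sub-$\sigma$-algebra $\mathcal{G}$, so no measurability issue arises. Thus the proof is essentially two lines after citing the two parts of Assumption~\ref{efkfAssump}.
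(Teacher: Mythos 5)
Your proof is correct and is essentially the same argument as the paper's: the paper defers to \cite[Lemma~1]{dzahini2020expected} (itself modeled on \cite[Lemma~2.5]{paquette2018stochastic}), and that proof is exactly the conditional Cauchy--Schwarz step you describe, bounding $\E{\ijkc|\fqdalgebra}\leq 1-\beta_f$ via Assumption~\ref{efkfAssump}(i) and the second moment via the $\kappa_f$-variance condition of Assumption~\ref{efkfAssump}(ii). Your remark that no measurability of $\ijkc$ with respect to $\fqdalgebra$ is needed is also accurate.
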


\begin{proof}
The proof is almost identical to that of~\cite[Lemma~1]{dzahini2020expected}.
\end{proof}
The proof of the main result of this section also requires the following standard assumption.
\begin{assumption}\label{boundedFassumpt}{\bl (Inspired by~\cite[Assumption~10.5]{CoScVibook},~\cite[Theorems~3.4 and Section~3.8]{KoLeTo03a})}
The function $f$ is bounded from below; that is, $-\infty<f_{\min}\leq f(\x)$ for some $f_{\min}\in\R$ and all $\x\in{\bl \mathcal{X}}$.
\end{assumption}

\begin{theorem}\label{zerothOrdTheor}
Let Assumptions~\ref{QkHaarAssumpt},~\ref{efkfAssump} and~\ref{boundedFassumpt} hold. 
Define $\varrho=\frac{1}{2}\beta_f(1-\nu)(1-\tau^2)$. Then, the random function $\Phik:=\frac{\nu}{\ef}\left(f(\Xk)-f_{\min}\right)+(1-\nu)\Dk^2$ satisfies 
\begin{equation}\label{PhikDynamics}
\E{\Phikun-\Phik|\fqdalgebra}\leq -\varrho\Dk^2 \quad\mbox{almost surely, for all}\ k\in \N, 
\end{equation}
which implies that 
\begin{equation}\label{finiteSeries}
\sum_{k=0}^{\infty}\Dk^2<\infty\quad\mbox{almost surely, whence}\qquad \underset{k\to\infty}{\lim}\Dk=0\quad\mbox{almost surely.}
\end{equation}
\end{theorem}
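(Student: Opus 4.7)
The plan is to first establish the one-step drift inequality~\eqref{PhikDynamics}, from which~\eqref{finiteSeries} will follow by telescoping together with Tonelli's theorem, using that $\Phik\ge 0$ (by Assumption~\ref{boundedFassumpt}) and that $\Phi_0$ is deterministic and finite. To verify the drift I would decompose the increment $\Phikun-\Phik$ along the three disjoint events $\iskc$ (unsuccessful), $\ijk\isk$ (successful with good estimates) and $\ijkc\isk$ (successful with bad estimates), where $\isk$ is the indicator that iteration $k$ succeeds.

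On $\iskc$ one has $\Xkun=\Xk$ and $\Dkun=\tau\Dk$, so the increment equals exactly $-(1-\nu)(1-\tau^2)\Dk^2\iskc$, independently of the quality of the estimates. On $\ijk\isk$ the estimates are $\ef$-accurate and $\Fuk-\Fok\le-\gamma\ef\Dk^2$, so Proposition~\ref{decreaseProp1} gives $f(\Xkun)-f(\Xk)\le-(\gamma-2)\ef\Dk^2$; combined with $\Dkun^2-\Dk^2\le(\tau^{-2}-1)\Dk^2$, the hypothesis $\nu(\gamma-2)\ge 2(1-\nu)(\tau^{-2}-1)$, and the elementary bound $\tau^{-2}-1\ge 1-\tau^2$ (valid for $\tau\in(0,1)$), the contribution on this event is at most $-(1-\nu)(1-\tau^2)\Dk^2\ijk\isk$. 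On $\ijkc\isk$ the estimates may be grossly inaccurate, yet the algorithmic acceptance criterion still gives, by the triangle inequality, $f(\Xkun)-f(\Xk)\le|\Fuk-f(\Xkun)|+|\Fok-f(\Xk)|-\gamma\ef\Dk^2$, and the same manipulation (now using $\nu\gamma\ge 2(1-\nu)(\tau^{-2}-1)$) yields
\[\ijkc\isk(\Phikun-\Phik)\le\tfrac{\nu}{\ef}\bigl(|\Fuk-f(\Xkun)|+|\Fok-f(\Xk)|\bigr)\ijkc\isk-(1-\nu)(1-\tau^2)\Dk^2\ijkc\isk.\]

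Adding the three case bounds, the three indicators $\iskc+\ijk\isk+\ijkc\isk$ partition unity, so the deterministic term combines to a single factor and one obtains
\[\Phikun-\Phik\le -(1-\nu)(1-\tau^2)\Dk^2+\tfrac{\nu}{\ef}\ijkc\bigl(|\Fuk-f(\Xkun)|+|\Fok-f(\Xk)|\bigr).\]
Taking conditional expectation with respect to $\fqdalgebra$ and applying Lemma~\ref{SDDSlemma1} twice to bound each error term by $\ef(1-\beta_f)\Dk^2$ gives $\E{\Phikun-\Phik|\fqdalgebra}\le[-(1-\nu)(1-\tau^2)+2\nu(1-\beta_f)]\Dk^2$. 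The hypothesis $\beta_f(1-\nu)(1-\tau^2)\ge 4\nu(1-\beta_f)$ is exactly $2\nu(1-\beta_f)\le\tfrac12\beta_f(1-\nu)(1-\tau^2)$, and the trivial $1-\tfrac12\beta_f\ge\tfrac12\beta_f$ (since $\beta_f\le 1$) turns the right-hand side into $-\varrho\Dk^2$ with $\varrho=\tfrac12\beta_f(1-\nu)(1-\tau^2)$, proving~\eqref{PhikDynamics}.

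For~\eqref{finiteSeries}, $\Phi_0$ is deterministic and~\eqref{PhikDynamics} gives $\E{\Phikun|\fqdalgebra}\le\Phik$ a.s., so by induction $\E{\Phik}\le\E{\Phi_0}<\infty$ for every $k$. Taking total expectation in~\eqref{PhikDynamics} and telescoping over $k=0,\dots,K$, then discarding the nonnegative $\E{\Phi_{K+1}}$, yields $\varrho\sum_{k=0}^{K}\E{\Dk^2}\le\E{\Phi_0}$. Letting $K\to\infty$ and applying Tonelli to the nonnegative summands produces $\E{\sum_{k=0}^{\infty}\Dk^2}<\infty$, hence $\sum_k\Dk^2<\infty$ almost surely and $\Dk\to 0$ almost surely. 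The main obstacle in the argument is the $\ijkc\isk$ case, where the algorithm accepts a step on the basis of inaccurate estimates and the true objective may even increase; Lemma~\ref{SDDSlemma1} is precisely what tames this pathology in conditional expectation, and the joint parameter conditions on $\gamma,\tau,\nu,\beta_f$ are calibrated so that the resulting pollution absorbs exactly half of the clean descent $(1-\nu)(1-\tau^2)\Dk^2$.
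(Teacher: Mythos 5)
Your proof is correct, and it is essentially the paper's argument: the same one-step drift on the supermartingale $\Phik$, established by a case analysis over success/failure and good/bad estimates, using Proposition~\ref{decreaseProp1} on the clean successful case, Lemma~\ref{SDDSlemma1} to tame the bad-estimate pollution in conditional expectation, and the same telescoping-plus-Tonelli step for \eqref{finiteSeries}. The one genuine (if minor) difference is the grouping of cases. The paper partitions first on estimate quality: on $\{\ijk=1\}$ it derives the pointwise descent $\ijk(\Phikun-\Phik)\leq -\ijk(1-\nu)(1-\tau^2)\Dk^2$ and then pays a factor $\beta_f$ through $\E{\ijk|\fqdalgebra}\geq\beta_f$, while on $\{\ijkc=1\}$ it discards the descent entirely and keeps only the pollution bound; the final combination is $-\beta_f(1-\nu)(1-\tau^2)+2\nu(1-\beta_f)\leq-\varrho$, which is exactly the stated condition on $\beta_f$. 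You instead partition first on success/failure, observe that the unsuccessful case gives the exact decrease regardless of estimate quality, and retain the $-(1-\nu)(1-\tau^2)\Dk^2$ term even on the bad-estimate successful event (using $\nu\gamma\geq\nu(\gamma-2)$), so the indicators sum to one and the clean descent survives with coefficient $1$ rather than $\beta_f$. This yields the marginally sharper intermediate bound $-(1-\nu)(1-\tau^2)\Dk^2+2\nu(1-\beta_f)\Dk^2$, which you then relax back to $-\varrho\Dk^2$ via $1-\tfrac12\beta_f\geq\tfrac12\beta_f$; the hypotheses needed are identical, so nothing is gained in generality, but your bookkeeping shows the parameter condition on $\beta_f$ holds with room to spare.
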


\begin{proof}
{\bbl See Appendix~\ref{th41}.}
\end{proof}

\section{Expected complexity analysis in the unconstrained case}\label{sec5}
By using an existing supermartingale-based framework proposed by Blanchet et al.~\cite{blanchet2016convergence}, this section derives in Theorem~\ref{complexResult} an upper bound on the expected number of iterations required by Algorithm~\ref{algoStoDARS} to drive~$\normii{\nabla f(\Xk)}$ below a given threshold $\epsilon\in (0,1)$, when applied to Problem~\eqref{probl1} in the particular case~$\bcX:=\rn$. Following~\cite{dzahini2020expected,DzaWildSub2022,paquette2018stochastic}, Section~\ref{sec51} introduces a general renewal-reward martingale process and its associated {\it stopping time} along with related assumptions. Then Section~\ref{sec52} demonstrates that these assumptions are satisfied by the stochastic process generated by Algorithm~\ref{algoStoDARS}, leading to the desired complexity result.

\subsection{General renewal-reward discrete time process and its stopping time}\label{sec51}

Next is recalled a formal definition of the stopping time associated with a discrete time stochastic process.
\begin{definition}\cite[Definition~1]{blanchet2016convergence}
$\Tepsilon$ is called a \emph{stopping time} with respect to the discrete time process $\accolade{\X_k}_{k\in\N}$ if, for all $k\in\N$, $\accolade{\Tepsilon=k}\in \sigma\left(\X_1,\dots,\X_k \right)$. 
\end{definition}

Consider the discrete time process $\accolade{\left(\Phik,\Dk\right)}_{k\in\N}$, with $\Phik, \Dk\in [0,\infty)$. On the same probability space as $\accolade{\left(\Phik,\Dk\right)}_{k\in\N}$, let us introduce the biased random walk process $\accolade{\Wtildek}_{k\in\N}$ satisfying
\begin{equation}\label{dynamicsWk}
\prob{\Wtildekun=1|\mathcal{H}_k}=q\qquad \mbox{ and }\qquad \prob{\Wtildekun=-1|\mathcal{H}_k}=1-q,
\end{equation}
where $\mathcal{H}_k=\sigma\left( \left(\PhiZero,\Dzero,\WtildeZero\right),\dots,\left(\Phik,\Dk,\Wtildek\right) \right)$ with ${\WtildeZero=1}$, and  $q\in (1/2, 1)$.

Let $\accolade{\Tepsilon}_{\epsilon>0}$ be a family of stopping times with respect to $\accoladekinN{\mathcal{H}_k}$, parameterized by $\epsilon$. In order to derive in~\cite{blanchet2016convergence,dzahini2020expected,DzaWildSub2022,paquette2018stochastic} an upper bound on $\E{\Tepsilon}$,  the following are assumed.

\begin{assumption}{\bl \cite{blanchet2016convergence}}\label{assumptionRenewal}
The discrete time process $\accoladekinN{\left(\Phik,\Dk,\Wtildek\right)}$ satisfies the following:
\begin{itemize}
\item[(i)] There exist $\lambda>0$ and $\dmax=\delta_0 e^{\lambda j_{\max}}$ such that $\Dk\leq \dmax$ for all $k$, where $j_{\max}\in\Z$.
\item[(ii)] For some $j_{\epsilon}\in\Z\cap (-\infty, 0]$, there exists $\depsilon=\delta_0 e^{\lambda j_{\epsilon}}$ such that for all $k$
\begin{equation}\label{dynamicsRenewal} 
\itksup\Dkun\geq \itksup\min\accolade{\Dk e^{\lambda\Wtildekun}, \depsilon}, 
\end{equation}
where $\Wtildekun$ satisfies~\eqref{dynamicsWk}. 
\item[(iii)] There exist a nondecreasing function $h:(0,\infty)\to (0,\infty)$ and a constant $\varrho>0$ such that for all $k$
\[\E{\Phikun-\Phik|\mathcal{H}_k}\itksup\leq-\varrho h\left(\Dk\right)\itksup\quad\mbox{almost surely.}\]
\end{itemize}
\end{assumption}
As emphasized in~\cite{blanchet2016convergence}, the event $\accolade{\Dk\geq\depsilon}$ occurs sufficiently frequently on average so that $\E{\Phikun-\Phik}$ can often be bounded by some negative fixed value (depending on~$\eps$), thus allowing an upper bound on the expected stopping time $\E{\Tepsilon}$, as stated by the next result.

\begin{theorem}\cite[Theorem~2]{blanchet2016convergence}\label{theorComplexity1}
Under Assumption~\ref{assumptionRenewal}, 
$\E{\Tepsilon}\leq \frac{q}{2q-1}\cdot\frac{\PhiZero}{\varrho h\left(\depsilon\right)}+1.$
\end{theorem}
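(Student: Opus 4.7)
My approach partitions the iterations $0,\dots,\Tepsilon-1$ into ``good'' steps (where $\delta_k \geq \depsilon$) and ``bad'' steps (where $\delta_k < \depsilon$), bounds the expected number of good steps using condition~(iii), and then relates the total count to the good count through the biased walk in~(ii).

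\emph{Step~1 (good-step count via a supermartingale).} I would first define
\[
A_k \;:=\; \sum_{i=0}^{k-1}\mathds{1}_{\{\Tepsilon>i\}}\mathds{1}_{\{\delta_i \geq \depsilon\}},\qquad \Psi_k \;:=\; \Phik + \varrho\,h(\depsilon)\,A_k.
\]
Monotonicity of $h$ combined with (iii) gives $\E{\Psi_{k+1}-\Psi_k \mid \mathcal{H}_k}\,\itksup\leq 0$ almost surely, so $(\Psi_{k\wedge\Tepsilon})_{k\geq 0}$ is a nonnegative supermartingale. Optional stopping at $\Tepsilon\wedge N$ followed by monotone convergence in $N$ (using $\Phik\geq 0$ and $A_k$ nondecreasing) then yields
\[
\varrho\,h(\depsilon)\,\E{A_{\Tepsilon}} \;\leq\; \PhiZero.
\]

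\emph{Step~2 (total-to-good ratio via the biased walk).} Passing to the log-scale $L_k := \lambda^{-1}\log(\delta_k/\depsilon) \in \mathbb{Z}$, condition~(ii) becomes $L_{k+1}\geq \min(L_k + W_{k+1},0)$ on $\{\Tepsilon>k\}$, where $W_{k+1}$ is the $\pm 1$ variable from~\eqref{dynamicsWk}. I would introduce the \emph{pessimistic reflected walk}
\[
L^*_0 \;:=\; \min(L_0,0),\qquad L^*_{k+1} \;:=\; \min(L^*_k + W_{k+1},0),
\]
and check by induction that $L^*_k \leq L_k$ for $k\leq\Tepsilon$, whence $\{L^*_k=0\}\subseteq\{L_k\geq 0\}$ and $A^*_k := \sum_{i<k}\mathds{1}_{\{L^*_i=0\}}\leq A_k$. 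Writing
\[
L^*_k \;=\; L^*_0 + \sum_{i=1}^{k} W_i - R_k,\qquad R_k \;:=\; \sum_{i=1}^{k}(L^*_{i-1}+W_i)^+,
\]
one uses $L^*_{i-1}\leq 0$ to conclude $(L^*_{i-1}+W_i)^+ = \mathds{1}_{\{L^*_{i-1}=0\}}\mathds{1}_{\{W_i=1\}}$, so both $R_k - q\sum_{i<k}\mathds{1}_{\{L^*_i=0\}}$ and $\sum_{i=1}^{k} W_i - (2q-1)k$ are $(\mathcal{H}_k)$-martingales. Optional stopping at $\Tepsilon$ (again via $\Tepsilon\wedge N$), together with $L^*_{\Tepsilon}\leq 0$, gives
\[
q\,\E{A^*_{\Tepsilon}} \;=\; \E{R_{\Tepsilon}} \;\geq\; L^*_0 + (2q-1)\,\E{\Tepsilon}.
\]
Rearranging and feeding in Step~1 delivers
\[
\E{\Tepsilon} \;\leq\; \frac{q}{2q-1}\,\E{A_{\Tepsilon}} - \frac{L^*_0}{2q-1} \;\leq\; \frac{q}{2q-1}\cdot\frac{\PhiZero}{\varrho\,h(\depsilon)} + 1,
\]
the ``$+1$'' absorbing $-L^*_0/(2q-1)$ under the natural initialization $\delta_0 \geq \depsilon$ (so $L^*_0 \in \{-1,0\}$) and $q/(2q-1)>1$.

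\emph{Main obstacle.} The subtle step is Step~2: one must construct $L^*_k$ so that, despite $L_k$ being allowed to ``jump up to~$0$'' whenever $L_k+W_{k+1}\geq 0$, the coupling $L^*_k\leq L_k$ is preserved throughout $k\leq\Tepsilon$; and one must justify optional stopping at $\Tepsilon$ without knowing a priori that $\E{\Tepsilon}<\infty$. The standard remedy is the truncation-then-limit scheme used above: derive the bound on $\E{\Tepsilon\wedge N}$ uniformly in $N$, invoke monotone convergence on the nonnegative processes $A_k$, $A^*_k$, $R_k$, and conclude $\E{\Tepsilon}<\infty$ a posteriori.
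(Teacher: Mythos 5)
The paper offers no proof of this statement at all: it is imported verbatim from \cite[Theorem~2]{blanchet2016convergence}, so there is no in-paper argument to compare against. Your reconstruction is, as far as I can check, correct, and it takes a genuinely different route from the renewal--reward proof in the cited source. Blanchet et al.\ define the successive renewal times at which $\Dk\geq\depsilon$, bound the expected inter-renewal time by $q/(2q-1)$ via a hitting-time analysis of the biased walk, bound the expected number of renewals before $\Tepsilon$ by $\PhiZero/(\varrho h(\depsilon))$ using the supermartingale, and stitch the two together with Wald's equation. You instead run a direct occupation-time argument: the compensated process $\Phik+\varrho h(\depsilon)A_k$ yields $\mathbb{E}[A_{\Tepsilon}]\leq \PhiZero/(\varrho h(\depsilon))$, and the reflected-walk coupling $L^*_k\leq L_k$ together with the two martingales $R_k-qA^*_k$ and $\sum_{i}W_i-(2q-1)k$ converts that occupation bound into a bound on $\mathbb{E}[\Tepsilon]$. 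What this buys is a self-contained, purely optional-stopping proof with no appeal to Wald's identity, and the coupling handles the $\min\{\cdot,\depsilon\}$ cap in Assumption~\ref{assumptionRenewal}-(ii) cleanly; the truncation at $\Tepsilon\wedge N$ followed by monotone convergence correctly sidesteps the need to know $\mathbb{E}[\Tepsilon]<\infty$ in advance. Two small remarks: the identity $(L^*_{i-1}+W_i)^+=\mathds{1}_{\{L^*_{i-1}=0\}}\mathds{1}_{\{W_i=1\}}$ requires $L^*_k$ to live in the nonpositive integers, which does hold by induction but only because Assumption~\ref{assumptionRenewal}-(ii) forces $j_\epsilon\leq 0$, hence $\depsilon\leq\delta_0$ and $L^*_0=0$; and for the same reason the case $L^*_0=-1$ in your final parenthetical never occurs (which is fortunate, since $-L^*_0/(2q-1)=1/(2q-1)>1$ for $q<1$ would not be absorbed by the $+1$). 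With $L^*_0=0$ your argument in fact delivers the bound without the additive $+1$, which is stronger than the stated result.
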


\subsection{Expected complexity result}\label{sec52}
Consider the discrete time process $\accoladekinN{\left(\Phik,\Dk,\Wtildek\right)}$ defined by the random function~$\Phik$ introduced in Theorem~\ref{zerothOrdTheor}, the stepsize~$\Dk$, and the random variables $\Wtildek:=2\left(\iak\ijk-\frac{1}{2}\right)$, where 
\begin{equation}\label{AkEventDef}
A_k:=\accolade{\alpha_Q\kappa(\pollDrandom_k)\normii{\nabla f(\Xk)}\leq -\sqrt{\frac{n}{p}}\nabla f(\Xk)^{\top}\left(\Ukprandom{\ubar{\hat{\di}}^p_k}\right)}.
\end{equation}
For an arbitrary fixed $\epsilon\in (0,1)$, consider the random number $\Tepsilon:=\inf\accolade{k\in\N:\normii{\nabla f(\Xk)}\leq \eps}$ of iterations required by Algorithm~\ref{algoStoDARS} to drive the norm of the gradient of~$f$ below~$\epsilon$.
As in~\cite{blanchet2016convergence,dzahini2020expected,DzaWildSub2022,paquette2018stochastic}, the occurrence of the event $\accolade{\Tepsilon=k}$ is determined by observing $\left(\PhiZero,\Dzero,\WtildeZero\right),\dots,\left(\Phikmun,\Dkmun,\Wtildekmun\right)$, which means that~$\Tepsilon$ is a stopping time for the stochastic process generated by Algorithm~\ref{algoStoDARS}, and hence with respect to the filtration $\accoladekinN{\mathcal{H}_k}:=\accoladekinN{\falgebra}$. Bounding $\E{\Tepsilon}$ by applying Theorem~\ref{theorComplexity1} requires showing next that Assumption~\ref{assumptionRenewal} is satisfied for Algorithm~\ref{algoStoDARS}.

Regarding Assumption~\ref{assumptionRenewal} {\it (i)} and~{\it (iii)}, the arguments are basically those in~\cite{dzahini2020expected} and~\cite{DzaWildSub2022}, respectively. More precisely,~{\it (i)} is trivially satisfied for $\lambda:=-\log(\tau)$ while, on the other hand, it follows from~\eqref{PhikDynamics} and the inclusion $\falgebra\subset\fqdalgebra$ that almost surely
\begin{equation}\label{tower1}
\E{\Phikun-\Phik|\falgebra}=\E{\E{\Phikun-\Phik|\fqdalgebra}|\falgebra}\leq \E{-\varrho\Dk^2|\falgebra}=-\varrho\Dk^2.
\end{equation}
This shows that~{\it (iii)} is satisfied with the function $h:\scalebox{0.75}{$\vartheta$}\mapsto\scalebox{0.75}{$\vartheta$}^2$ since~\eqref{tower1} holds for any realization of the random variable $\E{\Phikun-\Phik|\falgebra}$, and hence on the event $\accolade{\Tepsilon>k}$ in particular.

Showing in Lemma~\ref{dynamicsLemma} how Assumption~\ref{assumptionRenewal}-{\it (ii)} is satisfied for Algorithm~\ref{algoStoDARS} requires the following intermediate results of Lemmas~\ref{AkLemma1} and~\ref{smallLemmaHaar}. In particular the result stated next, which is a trivial corollary of Proposition~\ref{propKappaD2}, shows that at least one of the random poll directions $\accolade{\Ukprandom\diRandom:\diRandom\in\pollDrandom_k}$ used by Algorithm~\ref{algoStoDARS} makes an acute angle with the negative gradient, with a sufficiently high, but fixed, probability, conditioned to the past. It thereby provides a stochastic variant of a result in~\cite{KoLeTo03a,Vicente2013} representing the cornerstone of the complexity analysis of deterministic direct-search, which corresponds to~\eqref{cosineDescentDkn} when $\vi=-\nabla f(\xk)$.

\begin{lemma}\label{AkLemma1}
Let Assumption~\ref{QkHaarAssumpt} hold, and consider ${\ubar{\hat{\di}}^p_k}:=\hat{\di}^p_k({\footnotesize{\X}},\qrandom,\drandom)\in\argmax\accolade{-\nabla f(\Xk)^{\top}\left(\QkRandom\diRandom\right):\diRandom\in\pollDrandom_k}$. Then the event $A_k$ defined in~\eqref{AkEventDef} satisfies $\prob{A_k|\falgebra}=\E{\iak|\falgebra}\geq 1-\beta_Q$ almost surely.
\end{lemma}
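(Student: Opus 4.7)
The plan is to recognize the event $A_k$ as a direct instance of the conditional inequality established in Proposition~\ref{propKappaD2} (specifically~\eqref{kdprob2prime}), applied with the choice $\ubar{\vb}:=-\nabla f(\Xk)$, and then verify the three hypotheses of that proposition: (a) $\ubar{\vb}$ is $\falgebra$-measurable; (b) $\QkRandom$ and $\Drandom_k$ are independent of $\falgebra$; and (c) $\QkRandom$ is of the form covered by Theorem~\ref{JLforHaarTheor}.

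First I would rewrite the event. Using $\QkRandom=\sqrt{n/p}\,\Ukprandom$ from Assumption~\ref{QkHaarAssumpt}, the inequality defining $A_k$ is
\[
\alpha_Q\kappa(\pollDrandom_k)\normii{\nabla f(\Xk)}\leq -\sqrt{\tfrac{n}{p}}\,\nabla f(\Xk)^{\top}\Ukprandom\ubar{\hat{\di}}^p_k=\scal{-\nabla f(\Xk)}{\QkRandom\ubar{\hat{\di}}^p_k}.
\]
With $\ubar{\vb}:=-\nabla f(\Xk)$, this is exactly the event appearing inside~\eqref{kdprob2prime}, since $\ubar{\hat{\di}}^p_k\in\argmax\{\scal{\ubar{\vb}}{\QkRandom\diRandom}:\diRandom\in\pollDrandom_k\}$ by definition.

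Next I would verify the measurability and independence conditions. Because $\Xk$ is determined by the outcomes recorded in $\falgebra$ (namely the noisy function values and the entries of $\QellRandom,\Drandom_\ell$ for $\ell\leq k-1$, together with $\x_0\in\F_{-1}$), the vector $\ubar{\vb}=-\nabla f(\Xk)$ is $\falgebra$-measurable. For independence, Algorithm~\ref{algoStoDARS} draws $\QkRandom$ and $\Drandom_k$ afresh at iteration $k$ (via i.i.d.\ Haar matrices $\Urandomtilde_{t_k,\ntp}$ and independent $\Drandomtildep_{t_k}$), so the pair $(\QkRandom,\Drandom_k)$ is independent of the $\sigma$-algebra $\falgebra$ generated by strictly earlier random quantities. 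Finally, Assumption~\ref{QkHaarAssumpt} ensures that $\QkRandom=\sqrt{n/p}\,\Ukprandom$, with $\Ukprandom$ consisting of the first $p$ columns of a Haar-distributed $\Urandom_k\in\mathbb{O}(n)$, and that this matrix is $WAM(\alpha_Q,\beta_Q)$, which places $\QkRandom$ in the setting of Theorem~\ref{JLforHaarTheor}, thereby satisfying Assumption~\ref{JltAss1} required by Proposition~\ref{propKappaD2}.

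With all hypotheses in force, I would then invoke~\eqref{kdprob2prime} directly to conclude
\[
\prob{A_k\,|\,\falgebra}=\prob{\alpha_Q\kappa(\pollDrandom_k)\normii{\ubar{\vb}}\leq\scal{\ubar{\vb}}{\QkRandom\ubar{\hat{\di}}^p_k}\,\Big|\,\falgebra}\geq 1-\beta_Q\quad\text{almost surely},
\]
and the equality $\prob{A_k|\falgebra}=\E{\iak|\falgebra}$ is simply the definition of the conditional probability of an event in terms of its indicator. The main (very mild) subtlety in this plan is making sure that the filtration $\falgebra$ against which Proposition~\ref{propKappaD2} is applied matches the algorithm's: one must check that replacing the abstract $\sigma$-algebra $\mathcal{F}$ of Proposition~\ref{propKappaD2} with $\falgebra$ is legitimate, which reduces precisely to the independence statement argued above. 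No additional calculation is required beyond this identification.
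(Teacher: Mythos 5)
Your proposal is correct and follows essentially the same route as the paper: the paper's proof likewise observes that $\nabla f(\Xk)$ is $\falgebra$-measurable while $\QkRandom$ and $\pollDrandom_k$ are independent of $\falgebra$, and then invokes Proposition~\ref{propKappaD2} via~\eqref{kdprob2prime} with $\ubar{\vb}=-\nabla f(\Xk)$. Your write-up simply spells out the verification of the hypotheses in more detail than the paper does.
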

\begin{proof}
By construction, $\nabla f(\Xk)$ is $\falgebra$-measurable while $\QkRandom$ and $\pollDrandom_k$ are independent of $\falgebra$. The conditions of Proposition~\ref{propKappaD2} are therefore satisfied, and the result straightforwardly follows from~\eqref{kdprob2prime}.
\end{proof}

{\bl The following assumption on the cosine measure is made, not only for theoretical purposes, but also in order to prevent the PSSs used by Algorithm~\ref{algoStoDARS} from deteriorating (that is, ``becoming close to loosing the positive spanning property''~\cite{Vicente2013}). We note that it is neither strong nor unreasonable since, e.g., $\kappa(\pollD_k)=\frac{1}{\sqrt{p}}$ when $\pollD_k:=[\bm{I}_p -\bm{I}_p]$~(\cite[Page~408]{KoLeTo03a} and~\cite[Page~22]{CoScVibook}).
}
\begin{assumption}\label{kapminAssump}{\bl (\cite[Assumption~(3.12)]{KoLeTo03a}  and~\cite[Assumption~1]{Vicente2013})}
There exists $\kappa_{\min}>0$ such that the realizations of the PSSs used by Algorithm~\ref{algoStoDARS} for subspace polling satisfy $\kappa(\pollD_k)\geq \kappa_{\min}$.
\end{assumption}
\begin{assumption}\label{gradLipAssump}{\bl (Inspired by~\cite[Theorem~2.8]{CoScVibook}, \cite[Theorem~3.3]{KoLeTo03a}, and~\cite[Theorem~1]{Vicente2013}).}
The gradient $\nabla f$ of the objective function is $L_g$-Lipschitz continuous {\bl in an open set containing all the points in the poll sets~$\mathcal{P}_k$}.
\end{assumption}

In the following result inspired by~\cite[Lemma~2]{dzahini2020expected}, $\depsilon$ is a constant defined by 
\begin{equation}\label{depsEq}
\depsilon:=\frac{\epsilon}{\xi}\qquad\mbox{with}\qquad\xi>\kappa_{\min}^{-1}\alpha_Q^{-1}\left(L_g\sqrt{\frac{n}{p}}+(\gamma+2)\ef\right)=:\hat{\xi}.
\end{equation}
Thus, as in~\cite{dzahini2020expected} and~\cite{blanchet2016convergence}, it can be assumed without any loss of generality that $\depsilon=\tau^{-i}\delta_0$ for some integer $i\leq 0$, so that $\Dk=\tau^{-i_k}\depsilon$ for some integer $i_k\leq 0$.
\begin{lemma}\label{smallLemmaHaar}
Let Assumptions~\ref{QkHaarAssumpt},~\ref{kapminAssump} and~\ref{gradLipAssump} hold, and assume that $\dk\leq \depsilon$. Let $\fok$ and $\fuk$ be $\ef$-accurate estimates of $f(\xk)$ and  $f(\xk+\dk\ub)$, respectively. Assume that the event $A_k$ occurs. If $\normii{\nabla f(\xk)}>\epsilon$, then there exists $\ub\in \mathbb{U}_k^n:=\{\Ukp\di^p:\di^p\in \pollD_k\}$ such that $\fuk-\fok\leq -\gamma\ef\dk^2$; that is, the iteration $k$ is successful. 
\end{lemma}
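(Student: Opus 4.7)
The plan is to follow the template of the standard direct-search argument (compare~\cite{Vicente2013} and the analogous lemma in~\cite{dzahini2020expected}) but transported to the subspace setting via the event $A_k$, which supplies the needed cosine-type descent inequality for a single random poll direction. All probabilistic ingredients are already packaged into the statement (good estimates, occurrence of $A_k$); the remainder of the proof is a deterministic chain of three inequalities, and the choice of $\hat\xi$ in~\eqref{depsEq} is exactly what makes them fit together.

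First I would unpack the event $A_k$: since $\ubar{\hat\di}^p_k$ is the argmax of $-\nabla f(\xk)^\top(\Qk\di^p)$ over $\di^p\in \pollD_k$, letting $\ub^\star\defined \Ukp\ubar{\hat\di}^p_k\in\mathbb{U}_k^n$ (a unit vector, by Proposition~\ref{Prop2Point3}) and using $\QkRandom=\sqrt{n/p}\,\Ukprandom$, the occurrence of $A_k$ yields
\begin{equation*}
-\nabla f(\xk)^\top\ub^\star \;\geq\; \sqrt{\tfrac{p}{n}}\,\alpha_Q\,\kappa(\pollD_k)\,\normii{\nabla f(\xk)} \;\geq\; \sqrt{\tfrac{p}{n}}\,\alpha_Q\,\kmin\,\normii{\nabla f(\xk)},
\end{equation*}
where the last inequality uses Assumption~\ref{kapminAssump}. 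This is the subspace analog of the cosine descent bound~\eqref{cosineDescentDkn}, and it identifies the specific candidate direction $\ub^\star$ along which a sufficient decrease will be exhibited.

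Next I would apply the standard descent lemma obtained from Assumption~\ref{gradLipAssump} to the step $\dk\ub^\star$ of unit direction:
\begin{equation*}
f(\xk+\dk\ub^\star)-f(\xk)\;\leq\; \dk\,\nabla f(\xk)^\top\ub^\star + \tfrac{L_g}{2}\dk^2.
\end{equation*}
Substituting the bound from the previous step, then using the hypothesis $\normii{\nabla f(\xk)}>\epsilon$ together with $\dk\leq\depsilon=\epsilon/\xi$ (equivalently $\epsilon\geq \xi\dk$), I convert the first-order term into a $\dk^2$-term and obtain
\begin{equation*}
f(\xk+\dk\ub^\star)-f(\xk)\;\leq\; \bigl[-\sqrt{\tfrac{p}{n}}\,\alpha_Q\kmin\,\xi + \tfrac{L_g}{2}\bigr]\dk^2.
\end{equation*}
The condition $\xi>\hat\xi$ from~\eqref{depsEq} is calibrated precisely so that the bracket is at most $-(\gamma+2)\ef$, i.e.\ $f(\xk+\dk\ub^\star)-f(\xk)\leq -(\gamma+2)\ef\dk^2$.

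Finally I would combine this true-function decrease with the $\ef$-accuracy of $\fok,\fuk$ from Definition~\ref{epsAccDef0}: by the triangle inequality,
\begin{equation*}
\fuk-\fok \;\leq\; f(\xk+\dk\ub^\star)-f(\xk)+2\ef\dk^2 \;\leq\; -\gamma\ef\dk^2,
\end{equation*}
which exhibits the required $\ub^\star\in\mathbb{U}_k^n$ and thus certifies that iteration $k$ is successful. The only delicate point — and the step I would spend most care on — is the bookkeeping of the factor $\sqrt{n/p}$ that appears when translating between the Johnson--Lindenstrauss matrix $\Qk$ and the orthonormal $\Ukp$, since this factor is what forces the specific form of $\hat\xi$ in~\eqref{depsEq}. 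Everything else is either a direct rewriting of $A_k$, the standard smoothness inequality, or the bias/variance bookkeeping already built into Definition~\ref{epsAccDef0}.
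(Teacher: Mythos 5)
Your proof is essentially the contrapositive of the paper's and shares all of its ingredients: the paper assumes that no poll point achieves the estimated decrease, applies the mean value theorem together with the Lipschitz continuity of $\nabla f$ to the distinguished direction $\Ukp\hat{\di}^p_k$ supplied by $A_k$, and derives $\normii{\nabla f(\xk)}<\epsilon$, a contradiction; you run the same three steps forward, replacing the mean value theorem by the quadratic upper bound (which even yields the slightly better constant $L_g/2$ in place of $L_g$). Structurally there is nothing wrong with that choice.

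The genuine gap is in the final calibration, precisely the step you flagged as delicate. Your argument needs the bracket $-\sqrt{p/n}\,\alpha_Q\kmin\xi+\tfrac{L_g}{2}$ to be at most $-(\gamma+2)\ef$, i.e.
\begin{equation*}
\xi \;\geq\; \sqrt{\tfrac{n}{p}}\,\kmin^{-1}\alpha_Q^{-1}\left(\tfrac{L_g}{2}+(\gamma+2)\ef\right),
\end{equation*}
whereas~\eqref{depsEq} defines $\hat{\xi}=\kmin^{-1}\alpha_Q^{-1}\left(L_g\sqrt{n/p}+(\gamma+2)\ef\right)$, in which the $(\gamma+2)\ef$ term carries no factor $\sqrt{n/p}$. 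The condition $\xi>\hat{\xi}$ implies yours only when $\tfrac{L_g}{2}\sqrt{n/p}\geq(\gamma+2)\ef\left(\sqrt{n/p}-1\right)$, which fails whenever $L_g<2(\gamma+2)\ef$ and $n/p$ is large enough; so the inequality $f(\xk+\dk\ub^\star)-f(\xk)\leq-(\gamma+2)\ef\dk^2$ does not follow from $\xi>\hat{\xi}$ as you assert. The discrepancy arises because you divide the $A_k$ inequality by $\sqrt{n/p}$ before invoking smoothness, which deflates the cosine bound by $\sqrt{p/n}$ while leaving the estimation error $(\gamma+2)\ef\dk^2$ at full strength; the paper instead keeps the comparison at the level of $\alpha_Q\kappa(\pollD_k)\normii{\nabla f(\xk)}$, so that the factor $\sqrt{n/p}$ lands on the Lipschitz term alone. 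The fix is routine --- either carry out the argument without dividing by $\sqrt{n/p}$, or note that the lemma holds with the threshold displayed above in place of $\hat{\xi}$ --- but as written the last inequality is not justified.
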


\begin{proof}
{\bbl See Appendix~\ref{lem52}.}
\end{proof}
The result stated next shows that the dynamics~\eqref{dynamicsRenewal} of Assumption~\ref{assumptionRenewal}-$(ii)$ holds.
\begin{lemma}\label{dynamicsLemma}
{\bl Let Assumption~\ref{QkHaarAssumpt} hold with $0<\beta_Q<1-\frac{1}{2\beta_f}$ and let Assumptions~\ref{QkHaarAssumpt},~\ref{efkfAssump},~\ref{boundedFassumpt},~\ref{kapminAssump}, and~\ref{gradLipAssump} hold.}
Then Assumption~\ref{assumptionRenewal}-$(ii)$ is satisfied for $\Wtildek:=2\left(\iak\ijk-\frac{1}{2}\right)$, with $\lambda=-\log(\tau)$ and some fixed $q\in (\tilde{\beta}, 1)$, where $\tilde{\beta}=\beta_f(1-\beta_Q)$.
\end{lemma}

\begin{proof}
The result is proved by suitably adapting the proof of~\cite[Lemma~7]{blanchet2016convergence}, as was also done in~\cite[Lemma~3]{dzahini2020expected}, \cite[Lemma~5.3]{DzaWildSub2022} and~\cite[Lemma~4.8]{paquette2018stochastic}. First, observe that~\eqref{dynamicsRenewal} trivially holds when $\itksup=0$. It is shown next that when $\itksup=1$ (which is assumed in the remainder of the proof), then 
\[\Dkun\geq \min\accolade{\depsilon, \min\accolade{\dmax,\tau^{-1}\Dk}\iak\ijk+\tau\Dk\left(1-\iak\ijk\right)}.\] 
Recall that $\dk=\tau^{i_k}\depsilon$ for some integer $i_k$ so that if $\dk>\depsilon$, then $\dk\geq \tau^{-1}\depsilon$, whence $\dkun\geq \tau\dk\geq \depsilon$. Next, assume that $\dk\leq \depsilon$, and observe that $\normii{\nabla f(\xk)}>\eps$ since $\Tepsilon>k$. If $\iak\ijk=1$, it follows from Lemma~\ref{smallLemmaHaar} that the iteration is successful. Consequently, $\dkun=\min\accolade{\tau^{-1}\dk,\dmax}$. But if $\iak\ijk=0$, then it always holds that $\dkun\geq \tau\dk$. The proof is completed by observing that $\prob{\iak\ijk=1|\falgebra}=q$ for some $q\geq\tilde{\beta}$.
\end{proof}

The main complexity result of this section is derived next and shows that the expected number of iterations required by StoDARS to drive the norm of the gradient of~$f$ below~$\epsilon$ is $O\left(\epsilon^{-2}/(2\tilde{\beta}-1)\right)$.
\begin{theorem}\label{complexResult}
Let all assumptions made in Lemma~\ref{dynamicsLemma} hold. Then $\E{\Tepsilon}\leq \frac{\tilde{\beta}}{2\tilde{\beta}-1}\cdot\frac{\PhiZero{\tilde{\xi}}^2}{\varrho \eps^2}+1$, for some $\tilde{\xi}\geq\hat{\xi}$, where~{\bl $\varrho=\frac{1}{2}\beta_f(1-\nu)(1-\tau^2)$}, 
$\tilde{\beta}$ is the same parameter of Lemma~\ref{dynamicsLemma}, and $\hat{\xi}$ is given by~\eqref{depsEq}.
\end{theorem}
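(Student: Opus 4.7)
My plan is to derive the bound as a direct application of Theorem~\ref{theorComplexity1} to the stochastic process $\{(\Phik,\Dk,\Wtildek)\}_{k\in\N}$ generated by Algorithm~\ref{algoStoDARS}. The proof therefore reduces to verifying that Assumption~\ref{assumptionRenewal} holds for this specific process with the right constants, and then substituting the formula for $h(\depsilon)$ that arises from~\eqref{depsEq}.

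First, I would check each part of Assumption~\ref{assumptionRenewal}. Part~(i) is immediate: by construction $\Dk\leq \dmax=\tau^{-j_{\max}}\delta_0=\delta_0 e^{\lambda j_{\max}}$ with $\lambda:=-\log(\tau)>0$. Part~(ii) is exactly the conclusion of Lemma~\ref{dynamicsLemma}, with $\Wtildek=2(\iak\ijk-1/2)$, the same $\lambda=-\log(\tau)$, and some $q\in(\tilde\beta,1)$. Part~(iii) follows from~\eqref{tower1}, which yields $\E{\Phikun-\Phik\mid\falgebra}\leq -\varrho\,\Dk^2$ almost surely; multiplying by $\itksup$ shows~(iii) with the nondecreasing function $h(\vartheta)=\vartheta^2$ and the same $\varrho$ as in Theorem~\ref{zerothOrdTheor}. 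Since~\eqref{tower1} is unconditional in $k$, it remains valid on $\{\Tepsilon>k\}$ in particular.

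Applying Theorem~\ref{theorComplexity1} then gives
\[\E{\Tepsilon}\leq \frac{q}{2q-1}\cdot\frac{\PhiZero}{\varrho\, h(\depsilon)}+1 = \frac{q}{2q-1}\cdot\frac{\PhiZero\,\xi^2}{\varrho\,\eps^2}+1,\]
where the equality uses $h(\depsilon)=\depsilon^2=\eps^2/\xi^2$ from~\eqref{depsEq}. To convert the generic constant $q$ (only guaranteed to satisfy $q>\tilde\beta$ in Lemma~\ref{dynamicsLemma}) into the cleaner coefficient $\tilde\beta/(2\tilde\beta-1)$ claimed by the theorem, I would observe that the map $q\mapsto q/(2q-1)$ has derivative $-1/(2q-1)^2<0$ on $(1/2,1)$, hence is strictly decreasing there; in particular $q/(2q-1)<\tilde\beta/(2\tilde\beta-1)$ for every $q\in(\tilde\beta,1)$. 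Choosing $\tilde\xi:=\xi$ (which satisfies $\tilde\xi\geq\hat\xi$ since $\xi>\hat\xi$ by~\eqref{depsEq}) then yields the stated bound; alternatively, $\tilde\xi$ may be enlarged to absorb the ratio $q(2\tilde\beta-1)/[\tilde\beta(2q-1)]$ exactly.

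The argument is essentially assembly: all of the analytic work has already been carried out in Theorem~\ref{zerothOrdTheor} and Lemmas~\ref{AkLemma1},~\ref{smallLemmaHaar}, and~\ref{dynamicsLemma}. I do not anticipate any substantive obstacle; the only step requiring a moment of care is the passage from the generic probability $q$ to the sharper threshold $\tilde\beta$, resolved by the monotonicity of $q/(2q-1)$ on $(1/2,1)$.
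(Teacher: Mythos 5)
Your proposal is correct and follows essentially the same route as the paper, which simply invokes Theorem~\ref{theorComplexity1} after the verification of Assumption~\ref{assumptionRenewal} already carried out in Section~\ref{sec52} (parts~\emph{(i)} and~\emph{(iii)} in the discussion around~\eqref{tower1}, part~\emph{(ii)} in Lemma~\ref{dynamicsLemma}). Your explicit handling of the passage from the generic $q\in(\tilde\beta,1)$ to the coefficient $\tilde\beta/(2\tilde\beta-1)$ via the monotonicity of $q\mapsto q/(2q-1)$, and the choice $\tilde\xi:=\xi\geq\hat\xi$, is exactly the bookkeeping the paper delegates to the cited reference.
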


\begin{proof}
The proof, which trivially follows from Theorem~\ref{theorComplexity1}, is identical to that of~\cite[Theorem~5.4]{DzaWildSub2022}.
\end{proof}

\section{Convergence to Clarke stationary points}\label{sec6}

By exploiting the ability of StoDARS to generate a dense set of poll directions, the main goal of this section is to show that, with probability one, there exists a Clarke-{\it stationary} accumulation point~$\hat{\X}$ of the sequence $\accoladekinN{\Xk}$ of random iterates, that is, a point at which the Clarke generalized derivative of~$f$ is nonnegative in any direction that is hypertangent to~$\bcX$ at~$\hat{\X}$ with probability one. Recall that~$K_{\infty}$ denotes the random set of all unsuccessful iterations. In the remainder of the manuscript, the function~$f$ is no longer assumed to be continuously differentiable. Instead,~$f$ is assumed to be locally Lipschitz continuous throughout this section. We note that treating constraints suggests the use of a generalization of the Clarke derivative introduced by Jahn~\cite{Jahn2007}. The so-called Clarke--Jahn generalized derivative at $\hat{\x}\in \bcX$ in a direction $\vb\in \rn$ is defined by
\begin{equation*}
f^{\circ}(\hat{\x}; \vb):=\underset{h\searrow 0,\, \x+h\vb\in \bcX}{\underset{\x\to \hat{\x},\, \x\in \bcX}{\limsup}} \frac{f(\x+h\vb)-f(\x)}{h}.
\end{equation*}
Stochastic variants of the well-known notions of {\it refining subsequence} and {\it refining directions} (see, e.g.,~\cite{AbAu06,AuDe2006,AuHa2017} and references therein) are introduced in the following definition.
\begin{definition}\label{refPointDirDef}
The sequence $\accoladekinK{\Xk}$ of iterates is called a \emph{refining subsequence} if the event $\mathscr{E}_\delta:=\accolade{\lim_{k\in K}\Dk = 0}$ is almost sure. {\bbbl Recall $\mathscr{E}_\mathbb{U}$ and $\UsetDrandom_k^n$ of Proposition~\ref{densityProp} and} let  $\hat{\urandom}:=\underset{k\in L}{\lim}\frac{\ukRandom}{\normii{\ukRandom}}$ be an almost sure limit\footnote{The limit $\hat{\urandom}$ exists thanks to the compactness of the closed unit ball.} {\bbbl on $\mathscr{E}_\delta\cap\mathscr{E}_\mathbb{U}$,} for some $L\subseteq K\subseteq K_{\infty}$ and poll directions $\ukRandom {\bbbl \, \in \UsetDrandom_k^n}$. If the refining subsequence\footnote{Implicitly assuming $\accoladekinK{\Xk}$ to be a refining subsequence means that $\prob{\mathscr{E}_\delta}=1$, and hence $\prob{\mathscr{E}_\delta\, {\bbbl \cap\,\mathscr{E}_\mathbb{U}}\cap \accolade{\lim_{k\in L}\ukRandom/\normii{\ukRandom}=\hat{\urandom}}}=1$, so that the inclusions $\Xk+\Dk\ukRandom\in \bcX$ hold for infinitely many $k\in L$, with probability one.} $\accoladekinK{\Xk}$ converges almost surely to a \emph{refined point} $\hat{\X}$ with realizations $\hat{\X}(\omega)\in\bcX$, with $\omega\in \mathscr{E}_\delta$, and if $\Xk(\omega)+\Dk(\omega)\ukRandom(\omega)\in \bcX$ for infinitely many $k\in L(\omega)$, for all $\omega\in \mathscr{E}_\delta\, {\bbbl \cap\,\mathscr{E}_\mathbb{U}} \cap \accolade{\lim_{k\in L}\ukRandom/\normii{\ukRandom}=\hat{\urandom}}$,  then $\hat{\urandom}$ is called a \emph{refining direction} for~$\hat{\X}$ with probability one.
\end{definition}

An important cone plays a central role in this analysis. Indeed, when analyzing StoDARS, one is concerned with the requirement that $\xk+\dk\uk\in\bcX$ for infinitely many~$k$ sufficiently large. Such a requirement is met, for example, when the sequence $\accolade{\uk}$ admits an accumulation point~$\hat{\ub}$ that belongs to the  {\it hypertangent cone} defined next.

\begin{definition}\cite[Definition~6.7]{AuHa2017}\label{hypTangentDef}
A vector $\ub\in\rn$ is called a \emph{hypertangent vector} to the set $\bcX$ at $\x\in\bcX$ if there exists $\varepsilon>0$ such that
\[\yb+h\vb\in \bcX\quad\mbox{for all}\ \yb\in\bcX\cap \mathcal{B}(\x,\varepsilon),\quad \vb\in \mathcal{B}(\ub,\varepsilon),\quad\mbox{and}\ 0<h<\varepsilon. \] 
The set of hypertangent vectors to $\bcX$ at $\x$ is called the \emph{hypertangent cone} to $\bcX$ at $\x$ and denoted by $T^H_{\bcX}(\x)$.
\end{definition}

We derive below a lemma that is key for obtaining the main result of Theorem~\ref{ClarkeTheor}. It will also play an important role in the proof of Theorem~\ref{secondOrderClarkeTheor}, that is, the main result of Section~\ref{sec7}.
\begin{lemma}\label{limsupWeakTail}
Let {\bl Assumptions~\ref{QkHaarAssumpt},~\ref{efkfAssump} and~\ref{boundedFassumpt} hold.}
Let $\hat{\X}$ be a refined point with realizations $\hat{\X}(\omega)\in \bcX$, $\omega\in \mathscr{E}_\delta$, for the refining subsequence $\accoladekinK{\Xk}$. Let $\hat{\urandom}$ be the almost sure limit of $\accoladekinL{\ukRandom}$, for some $L\subseteq K\subseteq K_{\infty}$, such that $\hat{\urandom}(\omega)\in T_{\bcX}^H(\hat{\X}(\omega))$ for all $\omega\in \mathscr{E}_{\delta,\ub, {\bbbl \mathbb{U}}}:=\mathscr{E}_\delta\, {\bbbl \cap\,\mathscr{E}_\mathbb{U}}\cap \accolade{\lim_{k\in L}\ukRandom=\hat{\urandom}}$, where $\ukRandom\in\, {\bbbl \UrandomSet_k^n} \subset\snOne$.
Then with probability one, $\hat{\urandom}$ is a refining direction for~$\hat{\X}$, and
\begin{equation}\label{limsupWeakTresult}
\underset{k\in L}{\limsup} \frac{f\left(\Xk+\Dk\ukRandom\right)-f(\Xk)}{\Dk}\geq 0\qquad\mbox{almost surely.}
\end{equation}
Likewise, any infinite random subset $L^0\subseteq L$ satisfies
\begin{equation}\label{limsupWeakTresultPrime}
\underset{k\in L^0}{\limsup} \frac{f\left(\Xk+\Dk\ukRandom\right)-f(\Xk)}{\Dk}\geq 0\qquad\mbox{almost surely.}
\end{equation}
\end{lemma}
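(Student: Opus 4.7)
The plan is to combine three ingredients: (i) the hypertangent cone property, which will ensure that the poll step stays in $\mathcal{X}$ for all sufficiently large $k\in L$; (ii) a conditional Borel--Cantelli argument, which will extract a sub-subsequence of $L$ along which the random estimates are $\ef$-accurate; and (iii) the failure of the sufficient-decrease test on the unsuccessful iterations in $L$, which, together with $\ef$-accuracy, will produce the desired lower bound on the limsup.

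First I would establish the refining-direction claim. Fixing $\omega\in\mathscr{E}_{\delta,\ub}$ (which is almost sure by hypothesis), the inclusion $\hat{\urandom}(\omega)\in T^H_{\mathcal{X}}(\hat{\X}(\omega))$ yields via Definition~\ref{hypTangentDef} some $\varepsilon=\varepsilon(\omega)>0$ such that $\yb+h\vb\in\mathcal{X}$ whenever $\yb\in\mathcal{X}\cap\mathcal{B}(\hat{\X}(\omega),\varepsilon)$, $\vb\in\mathcal{B}(\hat{\urandom}(\omega),\varepsilon)$, and $0<h<\varepsilon$. Because $\Xk(\omega)\to\hat{\X}(\omega)$ and $\ukRandom(\omega)\to\hat{\urandom}(\omega)$ along $L(\omega)$, and $\Dk(\omega)\to 0$ by Theorem~\ref{zerothOrdTheor}, all three conditions hold simultaneously for every sufficiently large $k\in L(\omega)$, so $\Xk(\omega)+\Dk(\omega)\ukRandom(\omega)\in\mathcal{X}$ for such~$k$. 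This matches Definition~\ref{refPointDirDef} and shows that $\hat{\urandom}$ is a refining direction for $\hat{\X}$ with probability one.

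Next I would argue that $L':=\{k\in L : J_k\text{ occurs}\}$ is infinite almost surely. By Assumption~\ref{efkfAssump}(i), $\prob{J_k\mid\fqdalgebra}\geq\beta_f>0$ almost surely, uniformly in $k$. Since $L\subseteq K_{\infty}$ is infinite a.s.\ and this conditional lower bound is uniform in~$k$, a conditional (L\'evy-type) second Borel--Cantelli lemma applied to the indicators $\ijk$ will force $|L'|=\infty$ almost surely. Then, for $k\in L'$ large enough that $\Xk+\Dk\ukRandom\in\mathcal{X}$ (guaranteed by Step~1), iteration $k\in K_{\infty}$ is unsuccessful even though $\Xk+\Dk\ukRandom\in\Pkfeas$, so the sufficient-decrease test must fail at $\ukRandom$: $\Fuk-\Fok>-\gamma\ef\Dk^2$. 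Combining this with the two $\ef$-accuracy bounds~\eqref{epsAccDef1} will give
\begin{equation*}
f(\Xk+\Dk\ukRandom)-f(\Xk) \;>\; -(\gamma+2)\,\ef\,\Dk^2.
\end{equation*}
Dividing by $\Dk>0$ and taking $\limsup$ along $L'\subseteq L$, with $\Dk\to 0$, will yield~\eqref{limsupWeakTresult}. The claim~\eqref{limsupWeakTresultPrime} follows by the identical argument with $L$ replaced throughout by $L^0$ (and $L'$ by $(L^0)':=\{k\in L^0 : J_k\text{ occurs}\}$), since the conditional Borel--Cantelli step depends only on the infinite cardinality of the underlying random index set.

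The hardest part will be the conditional Borel--Cantelli step, because $L$ is a history-dependent random set and one cannot directly appeal to independence of $\{J_k\}_{k\in L}$. What rescues the argument is the almost-sure, $k$-uniform lower bound $\prob{J_k\mid\fqdalgebra}\geq\beta_f$, which is precisely what is needed to apply the conditional second Borel--Cantelli lemma on any infinite, filtration-adapted subsequence. The remaining care-points are routine: verifying measurability of $L$ with respect to $\{\fqdalgebra\}_k$, and confirming that having the single feasible poll point $\Xk+\Dk\ukRandom\in\Pkfeas$ at an unsuccessful iteration is enough to activate the ``no sufficient decrease'' branch and hence to conclude $\Fuk-\Fok>-\gamma\ef\Dk^2$ for that specific $\ukRandom$.
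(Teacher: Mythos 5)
Your Step~1 (the refining-direction claim) and your use of the failed sufficient-decrease test combined with $\ef$-accuracy are both sound and consistent with the paper. The gap is in your conditional Borel--Cantelli step. L\'evy's extension of the second Borel--Cantelli lemma lets you conclude that the events $J_k$ occur infinitely often along a subsequence only if the selection of that subsequence is \emph{predictable}, i.e., $\mathds{1}_{\{k\in L\}}$ is measurable with respect to the conditioning $\sigma$-algebra $\fqdalgebra$ (or at least adapted to the filtration). Here $L$ is anything but: it sits inside the random set $K_{\infty}$ of unsuccessful iterations (membership in which is decided only after $\Fok$ and $\Fuk$ are revealed, hence is not $\fqdalgebra$-measurable), and on top of that $L$ is chosen so that $\ukRandom\to\hat{\urandom}$, a tail condition depending on the entire future of the process. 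The uniform bound $\prob{J_k\mid\fqdalgebra}\geq\beta_f$ therefore does not control $\prob{J_k\cap\accolade{k\in L}\mid\fqdalgebra}$, and nothing in your argument rules out that the iterations selected into $L$ are systematically those with bad estimates. As written, the claim that $L'=\accolade{k\in L: J_k \text{ occurs}}$ is infinite almost surely is unjustified, and it is the load-bearing step of your proof.

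The paper sidesteps this entirely by never trying to locate accurate iterations. It uses the variance half of Assumption~\ref{efkfAssump} (part \emph{(ii)}), the conditional Markov inequality, and the summability $\sum_k\E{\Dk^2}<\infty$ from Theorem~\ref{zerothOrdTheor} to show, via the \emph{first} Borel--Cantelli lemma (which needs no adaptedness or independence), that for each fixed $\vartheta$ the normalized estimation error $\abs{\Fuk-\Fok-(f(\Xk+\Dk\ukRandom)-f(\Xk))}\Dk^{-1}$ exceeds $1/\vartheta$ only finitely often along the \emph{whole} sequence, almost surely. This eventual bound automatically restricts to any random subsequence $L$ or $L^0$, however it was chosen; combining it with $\Fuk-\Fok\geq-\gamma\ef\Dk^2$ on the feasible unsuccessful iterations gives $\limsup\geq -1/\vartheta$, and intersecting the resulting almost sure events over $\vartheta\in\N\setminus\accolade{0}$ gives the stated result. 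To repair your proof, replace the second Borel--Cantelli step with this variance-plus-first-Borel--Cantelli argument; the rest of your outline then goes through.
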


\begin{remark}\label{refiningRemark}
Before proving Lemma~\ref{limsupWeakTail}, we mention that given a refined point $\hat{\X}$ and a sequence $\accoladekinK{\ukRandom}\subset\snOne$ of poll directions, it is not necessarily guaranteed that there exists a subsequence $\accoladekinL{\ukRandom}$ converging almost surely to some $\hat{\urandom}$ satisfying $\prob{\hat{\urandom}\in T_{\bcX}^H(\hat{\X})}=1$ as assumed above. Thus, the latter assumption might be too strong or unrealistic unless one is able to prove that $\accoladekinK{\ukRandom}$ is such that any arbitrary direction $\hat{\urandom}$ satisfying $\prob{\hat{\urandom}\in T_{\bcX}^H(\hat{\X})}=1$ is the almost sure limit of a subsequence $\accoladekinL{\ukRandom}$, which is possible, for example, in the context of Algorithm~\ref{algoStoDARS}, provided $\accoladekinK{\ukRandom}\subset\snOne$ is dense on the unit sphere $\snOne$. Fortunately, this is the case for StoDARS, as {\bbbl guaranteed by} Proposition~\ref{densityProp}.
\end{remark}

\begin{proof}
{\bbl See Appendix~\ref{lem61}.}
\end{proof}


The following result gives a lower bound on the Clarke--Jahn generalized derivative and will be useful for the proof of Theorem~\ref{ClarkeTheor} below.
\begin{lemma}\label{lowerBoundClarke}~\cite[Lemma~8.2]{AuHa2017}
Suppose $f$ is Lipschitz near $\hat{\x}\in \bcX$ and $\hat{\ub}\in T^H_{\bcX}(\hat{\x})$ is an hypertangent direction. Then 
\begin{equation*}
f^{\circ}(\hat{\x}; \hat{\ub})\geq \underset{h\searrow 0,\, \ub\to \hat{\ub},\, \x+h\ub\in \bcX}{\underset{\x\to \hat{\x},\, \x\in \bcX}{\limsup}} \frac{f(\x+h\ub)-f(\x)}{h}.
\end{equation*}
\end{lemma}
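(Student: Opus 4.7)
The plan is to bound the right-hand side by showing that any sequence approaching its $\limsup$ can be perturbed, at asymptotically negligible cost, into a sequence admissible in the definition of $f^{\circ}(\hat{\x};\hat{\ub})$. I would pick sequences $\accoladekinN{\x_k}\subset\mathcal{X}$, $\accoladekinN{\ub_k}$, and $\accoladekinN{h_k}\subset(0,\infty)$ satisfying $\x_k\to\hat{\x}$, $\ub_k\to\hat{\ub}$, $h_k\searrow 0$, and $\x_k+h_k\ub_k\in\mathcal{X}$, whose difference quotients $\frac{f(\x_k+h_k\ub_k)-f(\x_k)}{h_k}$ converge to the right-hand side limsup. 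The goal is then to dominate these quotients by admissible ones of the form $\frac{f(\x_k+h_k\hat{\ub})-f(\x_k)}{h_k}$, which genuinely feed the definition of $f^{\circ}(\hat{\x};\hat{\ub})$.

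The comparison follows from the local Lipschitz property of $f$ near $\hat{\x}$: for some Lipschitz constant $L$ valid on a neighborhood of $\hat{\x}$ and for all $k$ large, both $\x_k+h_k\ub_k$ and $\x_k+h_k\hat{\ub}$ lie in this neighborhood, so
\[
\frac{f(\x_k+h_k\ub_k)-f(\x_k)}{h_k}\leq \frac{f(\x_k+h_k\hat{\ub})-f(\x_k)}{h_k}+L\normii{\ub_k-\hat{\ub}}.
\]
Since $\normii{\ub_k-\hat{\ub}}\to 0$ by construction, the additive error vanishes in the limit and contributes nothing to the $\limsup$.

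The key technical ingredient is ensuring that $\x_k+h_k\hat{\ub}\in\mathcal{X}$, without which the second quotient above would not be admissible in the definition of $f^{\circ}(\hat{\x};\hat{\ub})$. This is precisely the role of hypertangency: Definition~\ref{hypTangentDef} supplies $\varepsilon>0$ such that $\yb+h\vb\in\mathcal{X}$ whenever $\yb\in\mathcal{X}\cap\mathcal{B}(\hat{\x},\varepsilon)$, $\vb\in\mathcal{B}(\hat{\ub},\varepsilon)$, and $0<h<\varepsilon$. Applied with $\yb=\x_k\in\mathcal{X}$, $\vb=\hat{\ub}$, and $h=h_k$, this forces $\x_k+h_k\hat{\ub}\in\mathcal{X}$ for all $k$ large enough. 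Taking the $\limsup$ on both sides of the displayed inequality and invoking the admissibility of the triples $(\x_k,h_k,\hat{\ub})$ then yields the desired upper bound by $f^{\circ}(\hat{\x};\hat{\ub})$. The only subtlety—substituting the varying direction $\ub_k$ for the fixed $\hat{\ub}$ without losing feasibility—is exactly what the hypertangent cone is designed to absorb, so beyond identifying that hypertangency is the right hypothesis, there is no deeper obstacle.
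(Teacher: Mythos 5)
Your argument is correct and is essentially the standard proof of this result: the paper itself offers no proof, deferring entirely to the citation \cite[Lemma~8.2]{AuHa2017}, and the argument given there (originating in Audet and Dennis's MADS analysis) is exactly your decomposition --- swap the varying direction $\ub_k$ for the fixed $\hat{\ub}$ at a cost of $L\normii{\ub_k-\hat{\ub}}\to 0$ via local Lipschitz continuity, with hypertangency guaranteeing $\x_k+h_k\hat{\ub}\in\mathcal{X}$ so that the resulting quotients are admissible in the definition of $f^{\circ}(\hat{\x};\hat{\ub})$. No gaps; the only point worth making explicit is that the right-hand limsup is finite and attained along some admissible sequence (both of which follow from the Lipschitz bound and the fact that $\ub_k\equiv\hat{\ub}$ already yields admissible triples).
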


\begin{theorem}\label{ClarkeTheor}
Let all assumptions made in Lemma~\ref{limsupWeakTail} hold. Let $f$ be locally Lipschitz continuous. Let $\hat{\X}$ (with realizations $\hat{\X}(\omega)\in \bcX$, $\omega\in \mathscr{E}_{\delta}$) be the almost sure limit of a convergent refining subsequence $\accoladekinK{\Xk}$. Then for any $\hat{\urandom}\in\snOne$, with realizations $\hat{\urandom}(\omega)\in T^H_{\bcX}(\hat{\X}(\omega))$, $\omega\in \mathscr{E}_{\delta}\, {\bbbl \cap\,\mathscr{E}_\mathbb{U}}$, it holds that $f^{\circ}(\hat{\X}; \hat{\urandom})\geq 0$ with probability one. In other words, there exists an almost sure event $\mathscr{E}_{\delta,\ub,f, {\bbbl \mathbb{U}}}$ such that for all $\omega\in \mathscr{E}_{\delta,\ub,f, {\bbbl \mathbb{U}}}$, $f^{\circ}(\hat{\X}(\omega); \hat{\urandom}(\omega))\geq 0$.
\end{theorem}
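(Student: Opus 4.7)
The plan is to combine three ingredients already established in the paper: the almost-sure density of the poll directions on~$\snOne$ (Proposition~\ref{densityProp}), the weak-tail $\limsup$ estimate of Lemma~\ref{limsupWeakTail}, and the Clarke--Jahn lower bound of Lemma~\ref{lowerBoundClarke}. The target inequality $f^\circ(\hat{\X};\hat{\urandom})\geq 0$ will be obtained by chaining these three tools pathwise on a suitable almost sure event.

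First, since $\accoladekinK{\Xk}$ is a refining subsequence, $\Dk\to 0$ on an almost sure event $\mathscr{E}_\delta$, and on a further almost sure event $\mathscr{E}_{\mathrm{dens}}$ Proposition~\ref{densityProp} guarantees that $\accoladekinK{\ukRandom}\subset\snOne$ is dense in~$\snOne$. Working on $\mathscr{E}:=\mathscr{E}_\delta\cap\mathscr{E}_{\mathrm{dens}}\cap\accolade{\hat{\urandom}\in T^H_{\mathcal{X}}(\hat{\X})}$, for each $\omega\in\mathscr{E}$ I extract pathwise a subsequence $L(\omega)\subseteq K(\omega)$ such that $\ukRandom(\omega)\to \hat{\urandom}(\omega)$; this defines a random index set $L\subseteq K\subseteq K_\infty$ for which $\hat{\urandom}$ is the almost sure limit of $\accoladekinL{\ukRandom}$.

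Second, by construction the event $\mathscr{E}_{\delta,\ub}:=\mathscr{E}_\delta\cap\accolade{\lim_{k\in L}\ukRandom=\hat{\urandom}}$ contains $\mathscr{E}$ and is therefore almost sure, and on $\mathscr{E}_{\delta,\ub}$ the realization $\hat{\urandom}(\omega)$ belongs to $T^H_{\mathcal{X}}(\hat{\X}(\omega))$. All hypotheses of Lemma~\ref{limsupWeakTail} are then met, yielding that $\hat{\urandom}$ is (with probability one) a refining direction for~$\hat{\X}$ and
\[
\underset{k\in L}{\limsup}\ \frac{f(\Xk+\Dk\ukRandom)-f(\Xk)}{\Dk}\geq 0\qquad\mbox{almost surely.}
\]
Because $\hat{\urandom}\in T^H_{\mathcal{X}}(\hat{\X})$ on $\mathscr{E}_{\delta,\ub}$, Definition~\ref{hypTangentDef} moreover guarantees that $\Xk+\Dk\ukRandom\in\mathcal{X}$ for infinitely many $k\in L$ with probability one, which is precisely the feasibility condition required to activate Lemma~\ref{lowerBoundClarke}.

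Third, applying Lemma~\ref{lowerBoundClarke} pathwise on $\mathscr{E}_{\delta,\ub}$ with the substitutions $\x\gets\Xk$, $h\gets\Dk$, $\ub\gets\ukRandom$, $\hat{\x}\gets\hat{\X}$ and $\hat{\ub}\gets\hat{\urandom}$, and combining with the bound above, gives
\[
f^\circ(\hat{\X};\hat{\urandom})\ \geq\ \underset{k\in L}{\limsup}\ \frac{f(\Xk+\Dk\ukRandom)-f(\Xk)}{\Dk}\ \geq\ 0
\]
on an almost sure event $\mathscr{E}_{\delta,\ub,f}$ obtained as the intersection of the almost sure events invoked above, which is exactly the claim. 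The main obstacle I anticipate is purely measure-theoretic bookkeeping: one must argue that the pathwise extraction of $L(\omega)$ from the dense sequence can be done in a way compatible with the filtration (e.g.\ via a canonical selection rule indexing the rationals on $\snOne$) so that Lemma~\ref{limsupWeakTail} can legitimately be invoked along this random subsequence; once this is handled, everything else is a direct chaining of the three lemmas.
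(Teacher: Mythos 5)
Your proposal is correct and follows essentially the same route as the paper's proof: invoke Proposition~\ref{densityProp} to obtain a subsequence $L\subseteq K\subseteq K_{\infty}$ with $\lim_{k\in L}\ukRandom=\hat{\urandom}$ almost surely, conclude that $\mathscr{E}_{\delta,\ub}$ is almost sure so that Lemma~\ref{limsupWeakTail} yields the nonnegative $\limsup$, and then chain with Lemma~\ref{lowerBoundClarke} pathwise on the intersection event $\mathscr{E}_{\delta,\ub,f}$. The measurability concern you raise at the end is handled in the paper simply by appealing to Proposition~\ref{densityProp} for the existence of the random index set $L$, exactly as you do.
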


\begin{proof}
Let $\hat{\urandom}\in\snOne$ be a random vector with realizations $\hat{\urandom}(\omega)\in T^H_{\bcX}(\hat{\X}(\omega))$, $\omega\in \mathscr{E}_{\delta}\, {\bbbl \cap\,\mathscr{E}_\mathbb{U}}$. It follows from Proposition~\ref{densityProp} that there exists $L\subseteq K\subseteq K_{\infty}$ such that $\lim_{k\in L}\ukRandom=\hat{\urandom}$ almost surely, which implies that $\mathscr{E}_{\delta,\ub, {\bbbl \mathbb{U}}}$ (defined as in Lemma~\ref{limsupWeakTail}) is almost sure. Thus, the fact that $\mathscr{E}_{\delta,\ub, {\bbbl \mathbb{U}}}\subseteq \accolade{\hat{\urandom}\in T_{\bcX}^H(\hat{\X})}$ ensures that $\hat{\urandom}$ is a refining direction for~$\hat{\X}$ with probability one. It therefore follows from Lemma~\ref{limsupWeakTail} that~\eqref{limsupWeakTresult} holds.

The remainder of the proof is inspired by that of~\cite[Theorem~8.3]{AuHa2017}, and the result is proved conditioned on the almost sure event $\mathscr{E}_{\delta,\ub,f, {\bbbl \mathbb{U}}}:=\mathscr{E}_{\delta,\ub, {\bbbl \mathbb{U}}}\cap \accolade{\underset{k\in L}{\limsup} \frac{f\left(\Xk+\Dk\ukRandom\right)-f(\Xk)}{\Dk}\geq 0}$. Let $\omega\in \mathscr{E}_{\delta,\ub,f, {\bbbl \mathbb{U}}}$, and  denote in general by $\ubar{z}(\omega)=:z$ the corresponding realization of a given random quantity $\ubar{z}$, and let $\mathcal{L}:=L(\omega)$. Since $\xk\to\hat{\x}$, $\uk\to\hat{\ub}$ and $\dk\to 0$ (because we conditioned on $\mathscr{E}_{\delta,\ub,f, {\bbbl \mathbb{U}}}$), it follows from Lemma~\ref{lowerBoundClarke} that
\[f^{\circ}(\hat{\X}(\omega); \hat{\urandom}(\omega))=f^{\circ}(\hat{\x}; \hat{\ub})\geq\underset{k\in \mathcal{L}}{\limsup}\frac{f(\xk+\dk\uk)-f(\xk)}{\dk}\geq 0,\]
which completes the proof.
\end{proof}

\section{Convergence to second-order stationary points}\label{sec7}

The analysis of second-order behavior of MADS algorithms for general constrained optimization~\cite{AbAu06} is extended in this section to the stochastic case for StoDARS algorithms by exploiting the ability of these methods to generate a dense set of poll directions. More precisely, inspired by~\cite{AbAu06}, reasonable conditions are proposed under which, with probability one, there exists an accumulation point for the sequence of random iterates generated by StoDARS satisfying a second-order necessary optimality condition for general set-constrained optimization problems. 

In order to establish the main result of Theorem~\ref{secondOrderClarkeTheor}, nonsmooth notions of second derivatives are introduced. In particular, we consider a so-called generalized Hessian, a second-order-like extension of the Rademacher's theorem-based definition of the Clarke subdifferential~\cite[Section~1.2]{Clar83a}, developed in~\cite{CoCo90a} and~\cite{HUStNg84a}. 
In the present analysis the definition below due to Hiriart--Urruty, Strodiot, and Nguyen~\cite{HUStNg84a} is followed because of its consistency with the Clarke calculus for first-order derivatives, as highlighted in~\cite[Section~4.1]{AbAu06}. First, the notion of $C^{1,1}$ functions, which is central to the analysis, is introduced.

\begin{definition}\label{C11Def}\cite{AbAu06}
A function $g$ is $C^{1,1}$ around $\x$ if there exists an open set $\mathcal{S}$ containing $\x$ such that $g$ is continuously differentiable with Lipschitz derivatives for every point in $\mathcal{S}$.
\end{definition}

\begin{definition}\cite[Definition~4.1]{AbAu06}
Let the function $\zeta:\rn\to\R$ be $C^{1,1}$ around $\x\in\bcX\subseteq\rn$. The \emph{generalized Hessian} of $\zeta$ at $\x$, denoted by $\partial^2\zeta(\x)$, is the set of matrices defined as the convex hull of the set 
\[\accolade{\Amatrix\in\rnn:\exists\xk\to\x\ \mbox{with}\ \zeta\ \mbox{twice differentiable at}\ \xk\ \mbox{and}\ \nabla^2\zeta(\xk)\to\Amatrix}.\] 
\end{definition}

A second-order necessary optimality condition for constrained problems traditionally expressed in terms of the Lagrangian was extended in~\cite[Section~4.2]{AbAu06} for set-constrained problems, by establishing Clarke-based second-order necessary conditions for set-constrained optimality. It is expressed through Theorem~\ref{secOrdNecTheor} in terms of {\it feasible directions} formally defined below.

\begin{definition}\cite[Definition~4.3]{AbAu06}\label{feasDirDef}
The direction $\ub\in\rn$ is said to be \emph{feasible} for $\bcX\subset\rn$ at $\x\in \bcX$ if there exists $\varepsilon>0$ for which $\x+h\ub\in\bcX$ for all $0\leq h<\varepsilon$. The set of feasible directions for $\bcX$ at $\x\in\bcX$ is a cone denoted by~$T^F_{\bcX}(\x)$.
\end{definition}

\begin{theorem}\label{secOrdNecTheor}\cite[Theorem~4.4]{AbAu06}~(Second-order necessary condition for set-constrained optimality). If $f$ is $C^{1,1}$ around a local solution $\x^\star\in \bcX$ of Problem~\eqref{probl1}, then any feasible direction $\ub\in T^F_{\bcX}(\x^\star)$ for which $\ub^{\top}\nabla f(\x^\star)=0$ satisfies $\ub^{\top}\Amatrix\ub\geq 0$ for some $\Amatrix\in \partial^2 f(\x^\star)$.
\end{theorem}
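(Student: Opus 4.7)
The plan is to combine local optimality of $\x^\star$ with a second-order Taylor-type expansion for $C^{1,1}$ functions expressed through the generalized Hessian, and then extract a limiting Hessian via a compactness argument. The approach mirrors the classical proof of the second-order necessary optimality condition for smooth objectives, with $\nabla^2 f(\x^\star)$ replaced by a suitable element of $\partial^2 f(\x^\star)$.

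First I would use Definition~\ref{feasDirDef} to fix $\varepsilon>0$ such that $\x^\star+h\ub\in\mathcal{X}$ for every $h\in[0,\varepsilon)$. Shrinking $\varepsilon$ if necessary, local optimality of $\x^\star$ for~\eqref{probl1} and Definition~\ref{C11Def} further ensure that $f(\x^\star)\leq f(\x^\star+h\ub)$ for all such $h$ and that the segment $[\x^\star,\x^\star+h\ub]$ lies in an open convex neighborhood $\mathcal{S}$ of $\x^\star$ on which $\nabla f$ is Lipschitz. Next I would invoke the second-order mean value theorem of Hiriart--Urruty, Strodiot and Nguyen~\cite{HUStNg84a} for $C^{1,1}$ functions: for each $h\in(0,\varepsilon)$ there exist $\xi_h\in(\x^\star,\x^\star+h\ub)$ and $\Amatrix_h\in\partial^2 f(\xi_h)$ such that
\begin{equation*}
f(\x^\star+h\ub)=f(\x^\star)+h\,\ub^{\top}\nabla f(\x^\star)+\frac{h^2}{2}\ub^{\top}\Amatrix_h\ub.
\end{equation*}
Combining this identity with $f(\x^\star+h\ub)\geq f(\x^\star)$ and the hypothesis $\ub^{\top}\nabla f(\x^\star)=0$ will immediately yield $\ub^{\top}\Amatrix_h\ub\geq 0$ for every $h\in(0,\varepsilon)$.

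Finally I would let $h\searrow 0$. Since $\nabla f$ is $\Lg$-Lipschitz on $\mathcal{S}$, the matrices $\Amatrix_h$, being convex combinations of limits of ordinary Hessians at points converging into $\mathcal{S}$, all satisfy $\|\Amatrix_h\|\leq \Lg$; hence along any sequence $h_n\searrow 0$ Bolzano--Weierstrass produces a subsequence (not relabeled) along which $\Amatrix_{h_n}\to\Amatrix$ for some $\Amatrix\in\rnn$. Because $\xi_{h_n}\to\x^\star$, the closed-graph (upper semicontinuity) property of the set-valued map $\partial^2 f$ at $\x^\star$ gives $\Amatrix\in\partial^2 f(\x^\star)$. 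Passing to the limit in $\ub^{\top}\Amatrix_{h_n}\ub\geq 0$ then yields $\ub^{\top}\Amatrix\ub\geq 0$, which is the desired conclusion.

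The hard part will not be the local-optimality or compactness step but the rigorous use of the two nonsmooth second-order tools: the $C^{1,1}$ Taylor expansion producing a specific $\Amatrix_h\in\partial^2 f(\xi_h)$ on the open segment, and the closed-graph property of $\partial^2 f$ at $\x^\star$. Both are standard in the second-order nonsmooth analysis literature (see~\cite{HUStNg84a,CoCo90a}), but invoking them cleanly requires verifying that the selections $\Amatrix_h$ produced by the mean value theorem can indeed be chosen so that their accumulation points lie in the convex hull appearing in the definition of $\partial^2 f(\x^\star)$.
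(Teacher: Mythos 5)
Your proof is correct, and it is essentially the argument given in the cited source: the paper itself states this result as \cite[Theorem~4.4]{AbAu06} without reproving it, so there is no in-paper proof to diverge from. The three tools you invoke --- the $C^{1,1}$ mean value expansion producing $\Amatrix_h\in\partial^2 f(\xi_h)$ on the open segment, local boundedness of $\partial^2 f$ to extract a convergent subsequence, and upper semicontinuity to place the limit in $\partial^2 f(\x^\star)$ --- are exactly items \emph{(iv)}, \emph{(ii)}, and \emph{(iii)} of Proposition~\ref{genHessSetVal}, so the argument closes cleanly with what the paper already records.
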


In addition to the hypertangent cone and the cone of feasible directions, two other cones play a major role in this analysis and are introduced next.
\begin{definition}\cite[Definition~3.3]{AbAu06}\label{ClarkTangDef}
A vector $\ub\in\rn$ is called a \emph{Clarke tangent vector} to $\bcX\subset\rn$ at the point $\x\in\emph{cl}(\bcX)$ if, for every sequence $\accolade{\yb_k}$ of elements of $\bcX$ converging to~$\x$ and for every sequence $\accolade{h_k}$ of positive real numbers converging to zero, there exists a sequence $\accolade{\vi_k}$ of vectors converging to $\vi$ such that $\yb_k+h_k\vi_k\in \bcX$. The set of all Clarke tangent vectors to $\bcX$ at $\x$ is called the \emph{Clarke tangent cone} to $\bcX$ at $\x$, denoted by $T^{Cl}_{\bcX}(\x)$.
\end{definition}

\begin{definition}\cite[Definition~3.4]{AbAu06}\label{ContingDef}
A vector $\ub\in\rn$ is called a \emph{tangent vector} to $\bcX\subset\rn$ at the point $\x\in\emph{cl}(\bcX)$ if there exists a sequence $\accolade{\yb_k}$ of elements of $\bcX$ converging to~$\x$ and a sequence $\accolade{\lambda_k}$ of positive real numbers for which $\ub=\lim_k\lambda_k(\yb_k-\x)$. The set of all tangent vectors to $\bcX$ at $\x$ is called the \emph{contingent cone} or the \emph{sequential Bouligand tangent cone} to $\bcX$ at $\x$ and denoted by $T^{Co}_{\bcX}(\x)$.
\end{definition}

Before stating the main result of this section, some useful properties of the set $\partial^2\zeta(\x)$ and the set-valued mapping $\x\rightrightarrows\partial^2\zeta(\x)$ are recalled, along with a notion of {\it regularity} related to~$\bcX$.

\begin{proposition}\label{genHessSetVal}(\cite{HUStNg84a} and \cite[Section~4.1]{AbAu06})
Let the function $\zeta:\rn\to\R$ be $C^{1,1}$ around $\x\in\bcX\subseteq\rn$. Then the following hold:
\begin{itemize}
\item[(i)] $\partial^2\zeta(\x)$ is a \emph{nonempty, convex}, and \emph{compact} set of symmetric matrices.
\item[(ii)] $\partial^2\zeta$ is a \emph{locally bounded} set-valued mapping; that is, there exist $\varepsilon>0$ and $\eta\in\R_{+}$ such that 
\[\sup\accolade{\norme{\Amatrix}:\Amatrix\in\partial^2\zeta(\yb), \yb\in\mathcal{B}(\x,\varepsilon)}\leq\eta.\]
\item[(iii)] $\partial^2\zeta$ is \emph{upper-semicontinuous} (or \emph{closed}) set-valued mapping; that is,
\[\mbox{If}\ \xk\to\x\ \mbox{and}\ \Amatrix_k\to\Amatrix\ \mbox{with}\ \Amatrix_k\in\partial^2\zeta(\xk)\ \mbox{for all}\ k,\ \mbox{then}\ \Amatrix\in \partial^2\zeta(\x).\]
\end{itemize}
If $\zeta$ is $C^{1,1}$ in an open set $\mathcal{U}\subset\rn$ containing a line segment $[\bm{\gamma}_1,\bm{\gamma}_2]$, then
\begin{itemize}
\item[(iv)] there exists $\x\in (\bm{\gamma}_1,\bm{\gamma}_2)$ and a matrix $\Amatrix_{\x}\in \partial^2\zeta(\x)$ such that \[\zeta(\bm{\gamma}_2)=\zeta(\bm{\gamma}_1)+(\bm{\gamma}_2-\bm{\gamma}_1)^{\top}\nabla\zeta(\bm{\gamma}_1)+\frac{1}{2}(\bm{\gamma}_2-\bm{\gamma}_1)^{\top}\Amatrix_{\x}(\bm{\gamma}_2-\bm{\gamma}_1).\] 
\end{itemize}
\end{proposition}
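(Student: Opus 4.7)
The key observation underlying every part of the proposition is that $\zeta$ being $C^{1,1}$ around $\x$ means $\nabla\zeta$ is $L$-Lipschitz on some open neighborhood $\mathcal{S}$ of $\x$. By Rademacher's theorem applied to $\nabla\zeta$, the Hessian $\nabla^2\zeta(\yb)$ exists for almost every $\yb\in\mathcal{S}$, and whenever it exists it is symmetric (by the classical equality of mixed partials at points of $C^2$-differentiability) and satisfies $\norme{\nabla^2\zeta(\yb)}\leq L$ (since the Jacobian of an $L$-Lipschitz map is bounded by $L$ in operator norm wherever defined). These three facts — density of twice-differentiable points, symmetry, and the uniform bound $L$ — drive everything else.

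For (i), I would build a sequence $\xk\to\x$ of twice-differentiable points and extract a convergent subsequence of $\nabla^2\zeta(\xk)$ using the bound $L$; the limit is in the generating set, so $\partial^2\zeta(\x)$ is nonempty. Convexity is by definition. Symmetry is preserved under limits and convex combinations. Boundedness is immediate from the bound $L$. Closedness follows because the generating set is closed in the space of symmetric matrices (by compactness), and the convex hull of a compact subset of a finite-dimensional space is compact by Carathéodory's theorem (at most $n(n+1)/2+1$ generators suffice in the symmetric matrix space). Part (ii) is then trivial: shrink $\varepsilon$ so $\mathcal{B}(\x,\varepsilon)\subset\mathcal{S}$; any $\Amatrix\in\partial^2\zeta(\yb)$ for $\yb\in\mathcal{B}(\x,\varepsilon)$ is a convex combination of limits of Hessians at twice-differentiable points inside $\mathcal{S}$, each of norm $\leq L$, so $\norme{\Amatrix}\leq L=:\eta$.

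For (iii), suppose $\xk\to\x$ and $\Amatrix_k\to\Amatrix$ with $\Amatrix_k\in\partial^2\zeta(\xk)$. By Carathéodory, each $\Amatrix_k$ is a convex combination of at most $N:=n(n+1)/2+1$ matrices $\Amatrix_k^{(j)}$, each itself a limit of Hessians at twice-differentiable points approaching $\xk$. A diagonal extraction yields twice-differentiable points $\yb_k^{(j)}$ with $\yb_k^{(j)}\to\x$ and $\nabla^2\zeta(\yb_k^{(j)})$ close to $\Amatrix_k^{(j)}$; passing to a convergent subsequence of the coefficients (on the simplex) and of each matrix sequence, the limit $\Amatrix$ is a convex combination of points in the generating set of $\partial^2\zeta(\x)$, hence lies in $\partial^2\zeta(\x)$.

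Part (iv) is the main obstacle, and I would handle it by reduction to a scalar problem. Define $\varphi(t):=\zeta(\bm{\gamma}_1+t(\bm{\gamma}_2-\bm{\gamma}_1))$ on an open interval containing $[0,1]$; then $\varphi$ is $C^{1,1}$ with $\varphi'(t)=(\bm{\gamma}_2-\bm{\gamma}_1)^\top\nabla\zeta(\bm{\gamma}_1+t(\bm{\gamma}_2-\bm{\gamma}_1))$, and $\varphi'$ is Lipschitz on $[0,1]$. A generalized Taylor expansion for scalar $C^{1,1}$ functions — proved by writing $\varphi(1)-\varphi(0)-\varphi'(0)=\int_0^1(1-t)\varphi''(t)\,dt$ using the a.e.\ second derivative (valid by Rademacher applied to $\varphi'$), and then applying a mean value argument to the integrand $\varphi''\in[\inf,\sup]$ over $(0,1)$ combined with convexity of the scalar generalized Hessian — yields $\bar t\in(0,1)$ and $\alpha\in\partial^2\varphi(\bar t)$ with $\varphi(1)=\varphi(0)+\varphi'(0)+\tfrac12\alpha$. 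The chain-rule inclusion $\partial^2\varphi(\bar t)\subseteq(\bm{\gamma}_2-\bm{\gamma}_1)^\top\partial^2\zeta(\x)(\bm{\gamma}_2-\bm{\gamma}_1)$ at $\x:=\bm{\gamma}_1+\bar t(\bm{\gamma}_2-\bm{\gamma}_1)$ — which follows by passing to the limit in Hessians of smooth approximants along the line — then gives the claimed $\Amatrix_\x\in\partial^2\zeta(\x)$. The delicate step is justifying the scalar integral representation and the chain rule for the generalized Hessian along a line segment; both can be established by a standard mollification argument, convolving $\zeta$ with a smooth approximate identity and passing to the limit using the upper semicontinuity from (iii).
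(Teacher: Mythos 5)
The paper does not prove this proposition at all: it is imported verbatim from Hiriart--Urruty, Strodiot, and Nguyen and from Section~4.1 of Abramson--Audet, so there is no in-paper argument to compare against. Your reconstruction is essentially the standard one and is sound in outline: Rademacher applied to the Lipschitz map $\nabla\zeta$ gives a full-measure set of twice-differentiability points with Hessians uniformly bounded by the Lipschitz constant $L$, and parts (i)--(iii) then follow from compactness, Carath\'eodory, and a diagonal extraction exactly as you describe. Two soft spots are worth tightening. First, in (i) the symmetry of $\nabla^2\zeta$ at the Rademacher points is \emph{not} ``equality of mixed partials at points of $C^2$-differentiability'': those points are merely points where $\nabla\zeta$ is (Fr\'echet) differentiable, and the symmetry of the second derivative there is a separate classical theorem (valid because $\zeta$ is differentiable in a whole neighborhood); the conclusion is true, but the cited justification is the wrong theorem. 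Second, in (iv) your ``mean value argument to the integrand'' is the genuinely delicate step: knowing that $\alpha:=2\int_0^1(1-t)\varphi''(t)\,dt$ lies between the essential infimum and supremum of $\varphi''$ does not by itself produce a single $\bar t$ with $\alpha\in\partial^2\varphi(\bar t)$. The clean route (and the one in the cited reference) is to set $\psi(t):=\varphi(t)-\varphi(0)-t\varphi'(0)-\tfrac{t^2}{2}\alpha$, note $\psi(0)=\psi(1)=0$ and $\psi'(0)=0$, apply Rolle to get $\psi'(t_1)=0$ for some $t_1\in(0,1)$, and then apply Lebourg's mean value theorem for the Clarke subdifferential to $\psi'$ on $[0,t_1]$; this yields $\alpha\in\partial\varphi'(c)$ at an \emph{interior} point $c$, after which Clarke's chain rule for a Lipschitz map composed with an affine map gives $\partial\varphi'(c)\subseteq\accolade{(\bm{\gamma}_2-\bm{\gamma}_1)^{\top}\Amatrix(\bm{\gamma}_2-\bm{\gamma}_1):\Amatrix\in\partial^2\zeta(\x_c)}$. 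Your mollification fallback can also be made to work, but it risks the smooth mean-value points $\x_\epsilon$ accumulating at an endpoint of the segment, which would lose the stated requirement $\x\in(\bm{\gamma}_1,\bm{\gamma}_2)$.
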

\begin{definition}\cite[Definition~3.5]{AbAu06}\label{refularDef}
The set $\bcX$ is said to be \emph{regular} at $\x$ if $T^{Cl}_{\bcX}(\x)=T^{Co}_{\bcX}(\x)$.
\end{definition}

\begin{theorem}\label{secondOrderClarkeTheor}
Let all assumptions made in Lemma~\ref{limsupWeakTail} hold. Let $f$ be $C^{1,1}$ around any realization $\hat{\X}(\omega)$, $\omega\in \mathscr{E}_{\delta}$ of the almost sure limit $\hat{\X}$ of a convergent refining subsequence. Assume that for all $\omega\in \mathscr{E}_{\delta}$, $T^H_{\bcX}(\hat{\X}(\omega))\neq \emptyset$ and that $\bcX$ is regular at $\hat{\X}(\omega)$. 
Then $\hat{\X}$ satisfies the second-order necessary condition for set-constrained optimality, with probability one.
\end{theorem}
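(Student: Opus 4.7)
The plan is to combine the first-order Clarke stationarity from Theorem~\ref{ClarkeTheor} with a second-order Taylor expansion via the generalized Hessian of Proposition~\ref{genHessSetVal}, carried out along a refining subsequence on which the poll directions converge (by Proposition~\ref{densityProp}) to a prescribed feasible direction. I would work conditionally on an almost-sure event $\mathscr{E}$ on which the conclusions of Theorem~\ref{ClarkeTheor} and Proposition~\ref{densityProp} hold and on which $\accoladekinK{\Xk}$ converges to $\hat{X}$ with $T^H_{\mathcal{X}}(\hat{X})\neq\emptyset$ and $\mathcal{X}$ regular at $\hat{X}$. Since $f$ is $C^{1,1}$ around $\hat{X}$, $f^{\circ}(\hat{X};v)=v^{\top}\nabla f(\hat{X})$, so Theorem~\ref{ClarkeTheor} combined with the regularity identity $T^{Cl}_{\mathcal{X}}(\hat{X})=T^{Co}_{\mathcal{X}}(\hat{X})$ and the standard density of $T^H_{\mathcal{X}}(\hat{X})$ inside $T^{Cl}_{\mathcal{X}}(\hat{X})$ when $T^H_{\mathcal{X}}(\hat{X})\neq\emptyset$ delivers the first-order condition $v^{\top}\nabla f(\hat{X})\geq 0$ for every $v\in T^F_{\mathcal{X}}(\hat{X})$. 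It therefore suffices to establish the second-order condition for every hypertangent direction $\hat{u}$ with $\hat{u}^{\top}\nabla f(\hat{X})=0$; the extension to $T^F_{\mathcal{X}}(\hat{X})$ will follow by approximation and the upper-semicontinuity of $\partial^2 f$ in Proposition~\ref{genHessSetVal}(iii).

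Fix such an $\hat{u}$. By Proposition~\ref{densityProp}, there is an infinite random subset $L\subseteq K\subseteq K_{\infty}$ along which $\ukRandom\to\hat{u}/\normii{\hat{u}}$ almost surely, and the hypertangent property forces $\Xk+\Dk\ukRandom\in\mathcal{X}$ for all sufficiently large $k\in L$. Since $\prob{J_k\,|\,\fqdalgebra}\geq \beta_f$ almost surely by Assumption~\ref{efkfAssump}(i), a conditional second Borel--Cantelli argument---in the spirit of the tail bound driving the proof of Lemma~\ref{limsupWeakTail}---extracts a further almost-sure subset $L^0\subseteq L$ on which the estimates are $\ef$-accurate. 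For $k\in L^0$, unsuccessfulness gives $\Fuk-\Fok> -\gamma\ef\Dk^2$, which combined with $\ef$-accuracy of both estimates yields the quadratic lower bound
\[f(\Xk+\Dk\ukRandom)-f(\Xk)\ \geq\ -(\gamma+2)\ef\Dk^2,\]
the stochastic analogue of the deterministic MADS inequality $f(x_k+\delta_k u_k)\geq f(x_k)$ exploited at unsuccessful iterations by Abramson and Audet.

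Because $f$ is $C^{1,1}$ on an open neighborhood of $\hat{X}$, the segment $[\Xk,\Xk+\Dk\ukRandom]$ lies in that neighborhood for all large $k\in L^0$. Proposition~\ref{genHessSetVal}(iv) then furnishes $\bm{\zeta}_k\in(\Xk,\Xk+\Dk\ukRandom)$ and $\Amatrix_k\in\partial^2 f(\bm{\zeta}_k)$ with
\[f(\Xk+\Dk\ukRandom)=f(\Xk)+\Dk\,\ukRandom^{\top}\nabla f(\Xk)+\tfrac{1}{2}\Dk^2\,\ukRandom^{\top}\Amatrix_k\ukRandom,\]
and combining with the previous display gives
\[\ukRandom^{\top}\Amatrix_k\ukRandom\ \geq\ -2(\gamma+2)\ef\ -\ \tfrac{2}{\Dk}\,\ukRandom^{\top}\nabla f(\Xk).\]
Local boundedness of $\partial^2 f$ (Proposition~\ref{genHessSetVal}(ii)) permits extracting a sub-subsequential limit $\Amatrix_k\to\Amatrix$, and upper semicontinuity combined with $\bm{\zeta}_k\to\hat{X}$ places $\Amatrix\in\partial^2 f(\hat{X})$.

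The main obstacle is controlling $\ukRandom^{\top}\nabla f(\Xk)/\Dk$ in the limit: continuity gives $\ukRandom^{\top}\nabla f(\Xk)\to(\hat{u}/\normii{\hat{u}})^{\top}\nabla f(\hat{X})=0$, but the rate relative to $\Dk$ is not automatic. I would address this by a double thinning of $L^0$: first, use the density of poll directions on $\snOne$ (Proposition~\ref{densityProp}) to retain only indices $k$ with $\normii{\ukRandom-\hat{u}/\normii{\hat{u}}}=O(\Dk)$; second, exploit the fact that at unsuccessful iterations $\Xk$ remains frozen at the previous iterate while $\Dk$ contracts geometrically, which allows selection of $k$ with $\normii{\Xk-\hat{X}}=O(\Dk)$. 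Combined with the $L_g$-Lipschitz continuity of $\nabla f$ near $\hat{X}$ and $\hat{u}^{\top}\nabla f(\hat{X})=0$, these thinnings bound $\ukRandom^{\top}\nabla f(\Xk)/\Dk$ by a quantity whose $\limsup$ is nonpositive, using the first-order inequality $v^{\top}\nabla f(\hat{X})\geq 0$ established above. Passing to the limit then yields $\hat{u}^{\top}\Amatrix\hat{u}\geq -2(\gamma+2)\ef\normii{\hat{u}}^2$, and the exact condition $\hat{u}^{\top}\Amatrix\hat{u}\geq 0$ follows by noting that $\ef$ enters Assumption~\ref{efkfAssump} only as a tunable parameter of the variance-controlled estimation scheme and may be taken arbitrarily small. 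The extension from hypertangent to arbitrary $\hat{u}\in T^F_{\mathcal{X}}(\hat{X})$ uses approximation by hypertangent directions together with the upper-semicontinuity of $\partial^2 f$ to retain a common limiting matrix.
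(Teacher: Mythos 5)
Your computation up to the inequality $\uk^{\top}\Amatrix_k\uk\geq -2(\gamma+2)\ef-\frac{2}{\dk}\uk^{\top}\nabla f(\xk)$ is sound and uses the same raw ingredients as the paper (unsuccessfulness plus $\ef$-accuracy gives $f(\xk+\dk\uk)-f(\xk)\geq-(\gamma+2)\ef\dk^2$, and Proposition~\ref{genHessSetVal}-(iv) supplies the expansion). The two devices you invoke to finish are both gaps. First, the ``double thinning'': Proposition~\ref{densityProp} gives density of the poll directions on $\snOne$, which yields, for every $\varepsilon>0$, infinitely many $k$ with $\normii{\uk-\hat{\ub}}<\varepsilon$, but it does \emph{not} yield an infinite subsequence along which $\normii{\uk-\hat{\ub}}=O(\dk)$ --- that is a coupling between the realized directions and the decaying stepsize that nothing in the algorithm enforces; likewise $\normii{\xk-\hat{\x}}=O(\dk)$ is unsubstantiated (only $\sum_k\dk^2<\infty$ is available, so the tail of successful steps need not be $O(\dk)$). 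Moreover, even granting both thinnings, you would only obtain $\abs{\uk^{\top}\nabla f(\xk)}/\dk=O(1)$, not a nonpositive limsup: the first-order condition $\vb^{\top}\nabla f(\hat{\x})\geq 0$ says nothing about the sign of the $O(\dk)$ remainder $(\uk-\hat{\ub})^{\top}\nabla f(\xk)+\hat{\ub}^{\top}(\nabla f(\xk)-\nabla f(\hat{\x}))$. Second, and fatally, you cannot ``take $\ef$ arbitrarily small'': $\ef$ is a fixed parameter of the algorithm and of Assumption~\ref{efkfAssump}; the process $\accoladekinN{\Xk}$, hence $\hat{\X}$ and $\partial^2 f(\hat{\X})$, all depend on it, so the bound $\hat{\ub}^{\top}\Amatrix\hat{\ub}\geq-2(\gamma+2)\ef$ for the run's fixed $\ef$ does not improve to $\geq 0$.

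The paper closes exactly this hole with a symmetrization you do not use: it applies Proposition~\ref{genHessSetVal}-(iv) at both $\xk+\dk\uk$ and $\xk-\dk\uk$ and \emph{adds} the two expansions, so the first-order terms $\pm\dk\uk^{\top}\nabla f(\xk)$ cancel identically and the troublesome quotient $\uk^{\top}\nabla f(\xk)/\dk$ never appears. Arguing by contradiction ($\hat{\ub}^{\top}\hat{\Amatrix}\hat{\ub}<0$ for \emph{all} $\hat{\Amatrix}\in\partial^2 f(\hat{\x})$, the correct negation of the ``for some $\Amatrix$'' conclusion), it then combines $\limsup_{k}\,(f(\xk+\dk\uk)-f(\xk))/\dk\geq 0$ from Lemma~\ref{limsupWeakTail} with $\lim_k\,(f(\xk-\dk\uk)-f(\xk))/\dk=-\hat{\ub}^{\top}\nabla f(\hat{\x})=0$; the sufficient-decrease threshold only ever enters as $\gamma\ef\dk^2$ divided by $\dk$, where it vanishes. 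Note also that the paper treats arbitrary $\hat{\ub}\in T^F_{\mathcal{X}}(\hat{\X})$ directly: regularity together with $T^H_{\mathcal{X}}(\hat{\X})\neq\emptyset$ gives $T^F_{\mathcal{X}}(\hat{\X})\subseteq T^{Cl}_{\mathcal{X}}(\hat{\X})$, which is what guarantees $\xk+\dk\uk\in\mathcal{X}$ infinitely often so that Lemma~\ref{limsupWeakTail} applies --- this is where those two hypotheses are actually consumed, not in the first-order preamble you devote them to. To salvage your direct (non-contradiction) route you would need to reproduce this exact cancellation of the first-order term rather than estimate it.
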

\begin{proof}
Consider $\hat{\urandom}\in\snOne$ with realizations $\hat{\urandom}(\omega)$ such that $\hat{\urandom}(\omega)\in T^F_{\bcX}(\hat{\X}(\omega))$ and $\hat{\urandom}(\omega)^{\top}\nabla f(\hat{\X}(\omega))=0$, for all $\omega\in \mathscr{E}_{\delta}\, {\bbbl \cap\,\mathscr{E}_\mathbb{U}}$.
It follows from Proposition~\ref{densityProp} that there exists {\bbbl a subsequence $\accolade{\ukRandom}_{k\in L}$, for some $\ukRandom\in \UrandomSet_k^n$ and} $L\subseteq K\subseteq K_{\infty}$, such that $\lim_{k\in L}\ukRandom=\hat{\urandom}$ almost surely, which implies that $\mathscr{E}_{\delta,\ub, {\bbbl \mathbb{U}}}:=\mathscr{E}_\delta\, {\bbbl \cap\,\mathscr{E}_\mathbb{U}}\cap \accolade{\lim_{k\in L}\ukRandom=\hat{\urandom}}$  is almost sure.  We first observe that the conclusions~\eqref{limsupWeakTresult} and~\eqref{limsupWeakTresultPrime} of Lemma~\ref{limsupWeakTail} still hold with $T^H_{\bcX}$ replaced with $T^F_{\bcX}$ under the above regularity assumption and when $T^H_{\bcX}(\hat{\X}(\omega))\neq \emptyset$ for all $\omega\in \mathscr{E}_{\delta}\, {\bbbl \cap\,\mathscr{E}_\mathbb{U}}$. Indeed, the latter assumptions guarantee that $T^{Cl}_{\bcX}(\hat{\X}(\omega))= \mbox{cl}\left(T^H_{\bcX}(\hat{\X}(\omega))\right)=T^{Co}_{\bcX}(\hat{\X}(\omega))=\mbox{cl}\left(T^F_{\bcX}(\hat{\X}(\omega))\right)$ for any $\omega\in \mathscr{E}_{\delta}\, {\bbbl \cap\,\mathscr{E}_\mathbb{U}}$, so that $\hat{\urandom}(\omega)\in\mbox{cl}\left(T^F_{\bcX}(\hat{\X}(\omega))\right)= T^{Cl}_{\bcX}(\hat{\X}(\omega))$; that is, $\hat{\urandom}(\omega)$ is a Clarke tangent vector. Thus, the fact that $\mathscr{E}_{\delta,\ub, {\bbbl \mathbb{U}}}\subseteq \accolade{\hat{\urandom}\in T_{\bcX}^{Cl}(\hat{\X})}$ ensures (by definition of the Clarke tangent cone) that $\Xk(\omega)+\Dk(\omega)\ukRandom(\omega)\in \bcX$ for infinitely many $k\in L(\omega)$ sufficiently large and all $\omega\in \mathscr{E}_{\delta,\ub, {\bbbl \mathbb{U}}}$; that is, $\hat{\urandom}$ is a refining direction for~$\hat{\X}$ with probability one. All other reasoning in the proof of Lemma~\ref{limsupWeakTail} therefore applies, leading to~\eqref{limsupWeakTresult} and~\eqref{limsupWeakTresultPrime}.

The remainder of the proof is conditioned on $\mathscr{E}_{\delta,\ub, {\bbbl \mathbb{U}}}$ and inspired by the proof of~\cite[Theorem~4.8]{AbAu06}. For an arbitrary $\omega\in \mathscr{E}_{\delta,\ub, {\bbbl \mathbb{U}}}$, denote in general by $z:=\ubar{z}(\omega)$ the corresponding realization of a given random quantity $\ubar{z}$, and let $\mathcal{L}:=L(\omega)$.
Since $\mathscr{E}_{\delta,\ub, {\bbbl \mathbb{U}}}\subseteq \accolade{\underset{k\in L^0}{\limsup} \frac{f\left(\Xk+\Dk\ukRandom\right)-f(\Xk)}{\Dk}\geq 0}$ with $L^0\subseteq L$ as explained above, then the conditioning on $\mathscr{E}_{\delta,\ub, {\bbbl \mathbb{U}}}$ implies in particular that $\underset{k\in \mathcal{L}^0}{\limsup}\cfrac{f(\xk+\dk\uk)-f(\xk)}{\dk}\geq 0$.
Recall that $\hat{\ub}^{\top}\nabla f(\hat{\x})=0$, $\lim_{k\in \mathcal{L}}\dk=0$ and $\lim_{k\in \mathcal{L}}\xk=\hat{\x}$ (since we conditioned on $\mathscr{E}_{\delta,\ub, {\bbbl \mathbb{U}}}$), and suppose by way of contradiction that $\hat{\ub}^{\top}\hat{\mathbf{A}}\hat{\ub}<0$ for all $\hat{\mathbf{A}}\in\partial^2 f(\hat{\x})$.
It follows from Proposition~\ref{genHessSetVal}-{\it (iv)} that
\begin{eqnarray}
f(\xk+\dk\uk)-f(\xk) &=& \dk\uk^{\top}\nabla f(\xk) + \frac{1}{2}\dk^2\uk^{\top}\mathbf{A}^{+}_k\uk,\label{Eq43}\\
f(\xk-\dk\uk)-f(\xk) &=& -\dk\uk^{\top}\nabla f(\xk) + \frac{1}{2}\dk^2\uk^{\top}\mathbf{A}^{-}_k\uk,\label{Eq44}
\end{eqnarray}
where $\mathbf{A}^{+}_k\in \partial^2 f(\x^{+}_{ k})$ for some $\x^{+}_{k}\in (\xk, \xk+\dk\uk)$ and $\mathbf{A}^{-}_k\in \partial^2 f(\x^{-}_{ k})$ for some $\x^{-}_{ k}\in (\xk, \xk-\dk\uk)$. Recall Proposition~\ref{genHessSetVal}-{\it (i)--(iii)}. Since
{$\partial^2 f$ is locally bounded, then the sequence $\accolade{\mathbf{A}_k^{+}}_{k\in \mathcal{L}}$ is bounded and thus has} a subsequence {$\accolade{\mathbf{A}^{+}_k}_{k\in \mathcal{L}^{+}}$}, { with $\mathcal{L}^{+}\subseteq \mathcal{L}$, such that $\lim_{k\in \mathcal{L}^{+}}\mathbf{A}^{+}_k=:\mathbf{A}^{+}$}. Since {$\lim_{k\in \mathcal{L}^{+}}\dk=0$ and $\lim_{k\in \mathcal{L}^{+}}\xk=\hat{\x}$,  whence $\lim_{k\in \mathcal{L}^{+}}\x^{+}_{k}=\hat{\x}$, it follows from the upper-semicontinuity of $\partial^2 f$ that $\mathbf{A}^{+}\in \partial^2 f(\hat{\x})$.}
Likewise, the boundedness of the sequence $\accolade{\mathbf{A}_k^{-}}_{k\in \mathcal{L}^{+}}$ implies the existence of a subsequence {$\accolade{\mathbf{A}^{-}_k}_{k\in \mathcal{L}^{0}}$}, {with $\mathcal{L}^{0}\subseteq \mathcal{L}^{+}$, such that $\lim_{k\in \mathcal{L}^{0}}\mathbf{A}^{-}_k=:\mathbf{A}^{-}\in \partial^2 f(\hat{\x})$, with the latter inclusion due to the upper-semicontinuity of $\partial^2 f$ along with the fact that $\lim_{k\in \mathcal{L}^{0}}\x^{-}_{k}=\hat{\x}$}.
Moreover, since $\partial^2 f(\hat{\x})$ is a convex set, $\mathbf{A}_k:=\frac{1}{2}\left(\mathbf{A}^{+}_k+\mathbf{A}^{-}_k\right)$ {satisfies $\lim_{k\in \mathcal{L}^{0}} \mathbf{A}_k=$} $ \frac{1}{2}\left(\mathbf{A}^{+}+\mathbf{A}^{-}\right)=:{ \hat{\mathbf{A}}}\in \partial^2 f(\hat{\x})$. On the other hand, adding~\eqref{Eq43} and~\eqref{Eq44} yields
\begin{equation}\label{Eq45}
{\chi_k:=}\ \frac{1}{\dk}\left(\frac{f(\xk+\dk\uk)-f(\xk)}{\dk} + \frac{f(\xk-\dk\uk)-f(\xk)}{\dk}\right)=\uk^{\top}\mathbf{A}_k\uk,
\end{equation}
with $\underset{k\in \mathcal{L}^0}{\limsup}\cfrac{f(\xk+\dk\uk)-f(\xk)}{\dk}\geq 0$ as explained above.
Recalling that $\hat{\ub}^{\top}\nabla f(\hat{\x})=0$, we also observe that
\begin{equation}\label{Eqdirectional}
\lim_{k\in \mathcal{L}^0} \cfrac{f(\xk-\dk\uk)-f(\xk)}{\dk} =f'(\hat{\x}; -\hat{\ub})=(-\hat{\ub})^{\top}\nabla f(\hat{\x})=0.
\end{equation}
Thus, passing to the $\limsup$ in~\eqref{Eq45} when $k\in \mathcal{L}^0$ yields $0\leq \limsup_{k\in \mathcal{L}^0}\chi_k =\lim_{k\in \mathcal{L}^{0}}\uk^{\top}\mathbf{A}_k\uk = \hat{\ub}^{\top}\hat{\mathbf{A}}\hat{\ub} <0,$
which is impossible, and the proof is completed.

\end{proof}


\section{Numerical results}\label{sec8}
Inspired by~\cite[Section~6]{DzaWildSub2022} and~\cite[Section~4]{audet2019stomads} the performance of StoDARS is numerically investigated in this section, where many of its variants corresponding to various random subspaces 
are compared to an instance of SDDS algorithms~\cite{dzahini2020expected}, StoMADS~\cite{audet2019stomads} and STARS~\cite{DzaWildSub2022}. Stochastically noisy variants of~$40$ unconstrained deterministic problems of moderately large dimensions~$n$ are considered, where $n\in\intbracket{98, 125}$ (see~\cite{testprobs}). The objective functions defining these problems are sums of squares, that is, $f(\x):=\sum_{i=1}^m f_i(\x)^2$, where all $f_i$ are smooth functions\footnote{\bbl While the numerical experiments considered only contained true functions defined by the sum of squares of the generating functions, we have also experimented with the ``one-norm'' version (i.e., sum of absolute values of the generating functions) of this same test set and did not see significant changes for the variants reported on here.}. Two types of noise are considered: an {\it additive} noise and a {\it multiplicative} noise. In the former case (the only one considered in~\cite{audet2019stomads}), the functions~$f_i$ are additively perturbed respectively by independent random variables $\ubar{\theta}_i$, $i\in\intbracket{1,m}$, with mean zero, following the same distribution as $\ubar{\theta}$ leading to $f_{\ubar{\theta}}(\x):=\sum_{i=1}^m (f_i(\x)+\ubar{\theta}_i)^2$, so that $\Esp_{\ubar{\theta}}[f_{\ubar{\theta}}(\x)]=f(\x)+\sum_{i=1}^m \E{(\ubar{\theta}_i)^2}$, while in the latter case (the only one considered in~\cite{DzaWildSub2022}), $f_{\ubar{\theta}}(\x):=f(\x)(1+\ubar{\theta})$ so that $\Esp_{\ubar{\theta}}[f_{\ubar{\theta}}(\x)]=f(\x)$. We note however that in the additive case, optimization results are not affected by the constant bias term $\sum_{i=1}^m \E{(\ubar{\theta}_i)^2}$ since $\min_{\x} f(\x)=\min_{\x} \Esp_{\ubar{\theta}}[f_{\ubar{\theta}}(\x)]$. 
{\bl We emphasize the importance of examining both absolute and relative noise given that the initial function values $f(\x_0)$ vary significantly (from $10^{-6}$ to 
$10^{13}$).}
During the experiments, the random variable~$\ubar{\theta}$ with mean zero and standard deviation~$\sigma={\bl 10^{-3}}$ is either uniformly or normally distributed. Twenty replications corresponding to random seeds (common throughout the experiments) were performed for each of the~$40$ problems in order to take into account the variability due to the random subspaces or the stochasticity of $f_{\ubar{\theta}}$. Combined with the two aforementioned distributions, we have a total of~$1600$ {\it problem instances} for each type of noise.

For all numerical investigations of StoDARS, SDDS and StoMADS, only the poll step is used, with poll directions sorted relatively to the last successful direction in the noisy objective as was considered in~\cite{audet2019stomads} and~\cite{dzahini2020constrained}, that is, we do not use the optional search step. Moreover, the strategy known as the {\it opportunistic strategy}, consisting of an interruption of the poll step as soon as a better trial point is identified, is used during the experiments. For StoDARS variants, the random matrices $\Urandom_{k,\ntp}$ {\bl defining the random subspaces,} and the sets $\pollDrandom_k$ {\bl of} poll directions {\bl are} respectively obtained as follows. As described in Section~\ref{sec2Point2}, a ``QR-decomposition'' $\XfracRandom_k = \QfracRandom_k\RfracRandom_k$ via the Gram--Schmidt process is performed on random matrices $\XfracRandom_k\in\rnn$ from the real Ginibre ensemble, that is, with i.i.d. standard Gaussian entries. By considering the diagonal matrix with elements $({\bl \DfracRandomScal_k})_{ii}:=\sign((\RfracRandomScal_k)_{ii})$ where $\RfracRandom_k = ((\RfracRandomScal_k)_{ij})_{1\leq i,j\leq n}$, the matrices
$\Urandom_{k,\ntp}$ are defined by the first~$p$ columns of the Haar-distributed orthogonal matrices $\UfracRandom_k:=\QfracRandom_k{\bl \DfracRandom_k}$. On the other hand, denote by $\BrandomSet_k$ the orthogonal bases defined by the orthonormal columns of Haar-distributed matrices $\Brandom_k:=[{\ubar{\bm{b}}}_{k,1}\,\cdots\, \ubar{\bm{b}}_{k,p}]\in\mathbb{O}(p)$ generated {\bl independently of $\Urandom_{k,\ntp}$,} using the same strategy as above. Then the $\pollDrandom_k:=\BrandomSet_k\cup \accolade{-\ubar{\di}_s^p}\subset\mathbb{S}^{p-1}$ are minimal positive bases of~$\rp$, where $\ubar{\di}_s^p:=\left(\sum_{j=1}^p{\ubar{\bm{b}}}_{k,j}\right)/\normii{\sum_{j=1}^p{\ubar{\bm{b}}}_{k,j}}$. Thus, the poll sets $\mathbb{U}^n_k$ used by StoDARS are obtained as realizations of $\UrandomSet^n_k:=\accolade{\Urandom_{k,\ntp}\diRandom:\diRandom\in \pollDrandom_k}$, $k\in \N$. 

As first stochastic variant of MADS~\cite{AuDe2006}, StoMADS is a StoDARS-like algorithm where ``OrthoMADS~$2n$'' poll directions are constructed in full-space instead of subspaces, using e.g., a technique based on Householder matrices (see, e.g.,~\cite[Algorithm~2]{audet2019stomads} and~\cite[Algorithm~8.2]{AuHa2017}). This means that at each iteration, the algorithm uses a set of~$2n$ poll directions defining a maximal positive basis of~$\rn$. Moreover, StoMADS uses the same acceptance condition as StoDARS to accept new trial points which instead of having the freedom to be anywhere in the variable space, are constrained to belong to a {\it discretization} of that space called the {\it mesh}. On the other hand, SDDS is introduced in~\cite{dzahini2020expected} as a broad class of directional direct search algorithms that encompasses StoMADS and every stochastic direct search algorithm making use of poll directions defining a positive spanning set, and an acceptance condition of new points (that can be anywhere in the variable space) which in fact inspired that of StoDARS as clarified in the above sections. Thus, replacing in StoDARS the matrix $\Ukp$ by the identity matrix $\bm{I}_n\in\rnn$ while using a minimal positive basis $\mathbb{D}^n_k\in\mathbb{S}^{n-1}$ (that is, with $n+1$ directions) lead to an instance of SDDS which is therefore also referred to as SDDS in throughout the experiments. Recently introduced in~\cite{DzaWildSub2022} as a subspace variant of the stochastic trust-region STORM algorithm using full-space random models, and an extension to noisy objectives of a simplified version of the trust-region RSDFO algorithm~\cite{CRsubspace2021} using random  subspace models in a deterministic objective framework, STARS makes use of random models constructed in random subspaces. These subspaces, unlike StoDARS, can be constructed by means of various random matrices such as {\it hashing} matrices~\cite{DzaWildHashing2022,KaNel2014SparseLidenstrauss}, Haar distribution-based matrices as in the present manuscript, etc. Thus, motivated by the choices of $p\in\accolade{2,5}$ and Haar distribution-based subspaces  that yield the best observed performance of STARS, these variants of the method are compared to StoDARS, not only for the latter values of~$p$, but also for $p=n$ as was also considered in~\cite{DzaWildSub2022} in order to investigate the possible difference between the {\bl corresponding StoDARS} variant and ``StoDARS with $\Ukp:=\bm{I}_n$'' corresponding to SDDS as detailed above. {\bl We note that the STARS and StoDARS variants are respectively labeled ``STARS-$p$'' and ``StoDARS-$p$'', where $p\in\accolade{2,5,n}$.}

{\bl Let $n_k$ be the number of new noisy function evaluations performed at each iteration.} When estimating unknown function values using a Monte Carlo approach with $n_k=4$ (that yield an observed best performance of StoMADS) and $n_k=25$ (considered in STARS experiments~\cite[Section~6]{DzaWildSub2022}), we reuse available samples from previous iterations following the same approach described in the last paragraph of~\cite[Section~2.3]{audet2019stomads}, which was also used in~\cite[Section~6]{DzaWildSub2022} and~\cite[Section~5.2]{dzahini2020constrained}. Indeed, as recalled in~\cite[Section~6]{DzaWildSub2022}, 
let $\ubar{\theta}^0_\ell$, $\ell = 1,\dots,\pi_k$, and $\ubar{\theta}^{\ub}_\ell$, $\ell = 1,\dots,\pi_k$, be independent random samples of the independent random variables $\ubar{\theta}^0$ and $\ubar{\theta}^{\ub}$ following the same distribution as $\ubar{\theta}$. At iteration~$k$, let $n_0:=\pi_0$ and $n_k\leq \pi_k$ be the number of function evaluations used for an estimate's computation. {\bl When~$k$ is a} successful iteration, {\bl then the estimate at $\x_{k+1}$ is given by} $f^0_{k+1}=\frac{n_k\fuk+\sum_{\ell=n_k +1}^{\pi_{k+1}}f_{\theta^{\ub}_{\ell}}(\x_{k+1})}{\pi_{k+1}}\approx f(\x_{k+1})$, where $\fuk=\frac{1}{n_k}\sum_{\ell=1}^{n_k}f_{\theta^{\ub}_{\ell}}(\xk + \delta_k\ub)$ and $\pi_{k+1}=n_k+n_{k+1}$. {\bl This means that $f^0_{k+1}$ is obtained by averaging $n_k+n_{k+1}$ noisy function values including the~$n_k$ available samples at~$\xk+\dk\ub$ and the~$n_{k+1}$ newly generated samples at~$\x_{k+1}$.} When the iteration~{\bl $k$} is unsuccessful, then ${f^0_{k+1}=\frac{\pi_k\fok+\sum_{\ell=\pi_k +1}^{\pi_{k+1}}f_{\theta^0_{\ell}}(\x_{k+1})}{\pi_{k+1}}\approx f(\x_{k+1})}$, where $\pi_{k+1}=\pi_k+n_{k+1}$. {\bl This means that $\pi_k+n_{k+1}$ noisy function values, including the available $\pi_k$ samples at~$\xk$ and the~$n_{k+1}$ newly generated samples at~$\x_{k+1}$, are averaged to obtain~$f^0_{k+1}$.} While all STARS variants use the same algorithmic hyperparameters as in~\cite[Section~6]{DzaWildSub2022}, the direct-search algorithms share the same common algorithmic parameters. Thus, $\gamma\ef=1$, $\tau=0.5$ and $\delta_0=1$.

All the~$7$ algorithms are assessed by using data 
profiles~\cite{MoWi2009}. Next, for each type of noise and each of the 1,600 stochastically noisy problem instances, $\bm{x}_0$ is the starting point, $\bm{x}_N$ is the point with the best~$f$ {\bl (i.e., associated with a noise-free evaluation at $\bm{x}_N$)} function value found by an algorithm after~$N$ evaluations of the noisy function~$f_{\ubar{\theta}}$ while~$f^{\star}$ is the least such value found by all the algorithms. We consider a given problem to be successfully solved within a convergence tolerance~$\tau\in [0,1]$ after $N$ evaluations if
$
	f(\bm{x}_N)\leq f^{\star} + \tau(f(\bm{x}_0) - f^{\star}).
$
The vertical and horizontal axes of the data profiles show, respectively, the proportion of problems solved and the number of noisy function evaluations divided by~$(n+1)$. Throughout the experiments a budget of 1,500$(n+1)$ noisy function evaluations is allocated to all the algorithms.

The data profiles used to compare the algorithms are depicted in Figures~\ref{dataproft3s3ev4ty67}~--~\ref{dataproft3s3ev25ty45} for convergence tolerances~$\tau=10^{-2}$ and $\tau=10^{-3}$, and two sampling schemes defined by $n_k=4$ and $n_k=25$. All the data profiles show that in general, StoDARS-$5$ outperforms the other algorithms when given sufficient budget, especially when $n_k=4$ which results in very poor quality models (compared to $n_k=25$), affecting the performance of STARS variants. The latter, on the other hand, seem more robust for small budgets of function evaluations. As expected, regardless of the sampling schemes and the convergence tolerances, all the direct search algorithms using a full-space polling strategy (SDDS, StoDARS-$n$ and StoMADS) are outperformed by StoDARS-$2$ and StoDARS-$5$ which use fewer poll directions generated in random subspaces. When $p=n$, the matrix $\Ukp$ is orthogonal whence the vectors in the minimal positive bases used by StoDARS-$n$ are images, either by a reflection or a rotation, of the vectors in the positive bases used by SDDS, given that the transformation $\di\mapsto \Ukp\di$ can in that case be considered as a generalization of reflections and rotations. Thus, such positive bases of~$\rn$ have the same positive spanning properties as was also demonstrated in Proposition~\ref{Prop2Point3}. This explains the similarities observed in the performance of SDDS and StoDARS-$n$.


\twocolumn 
\begin{figure}
\centering
\includegraphics[scale=0.22]{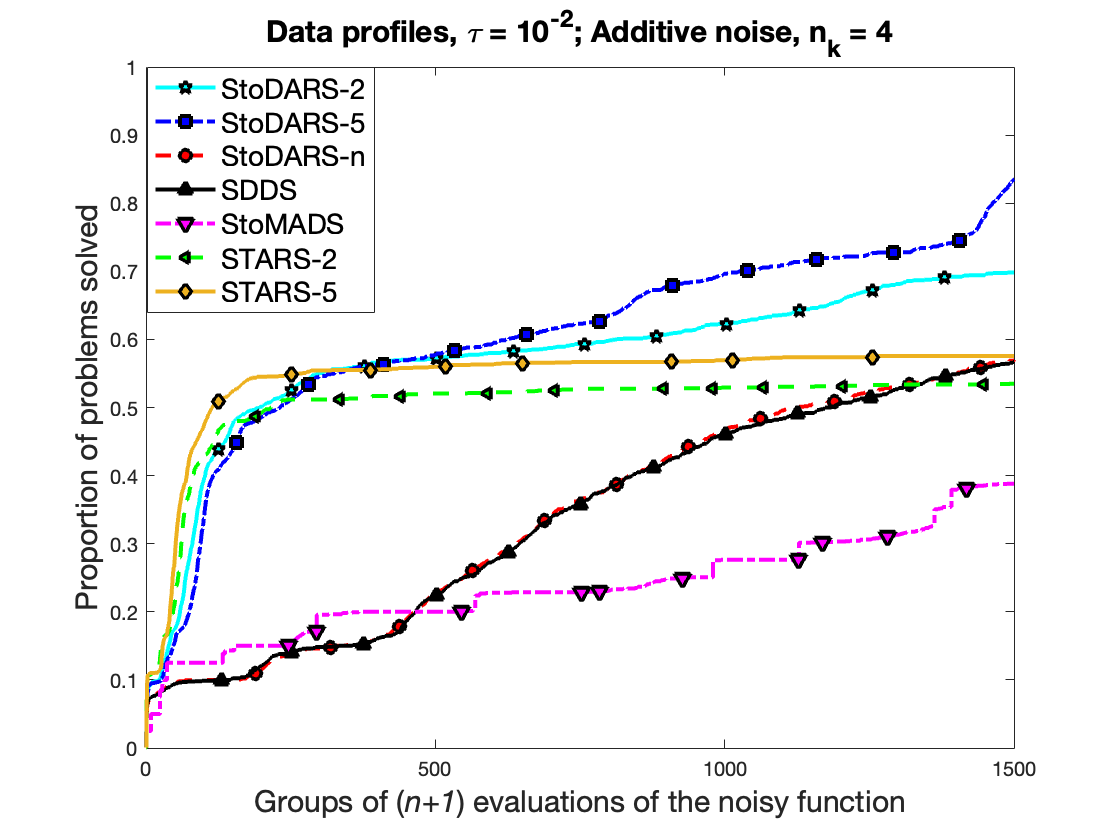}
\includegraphics[scale=0.22]{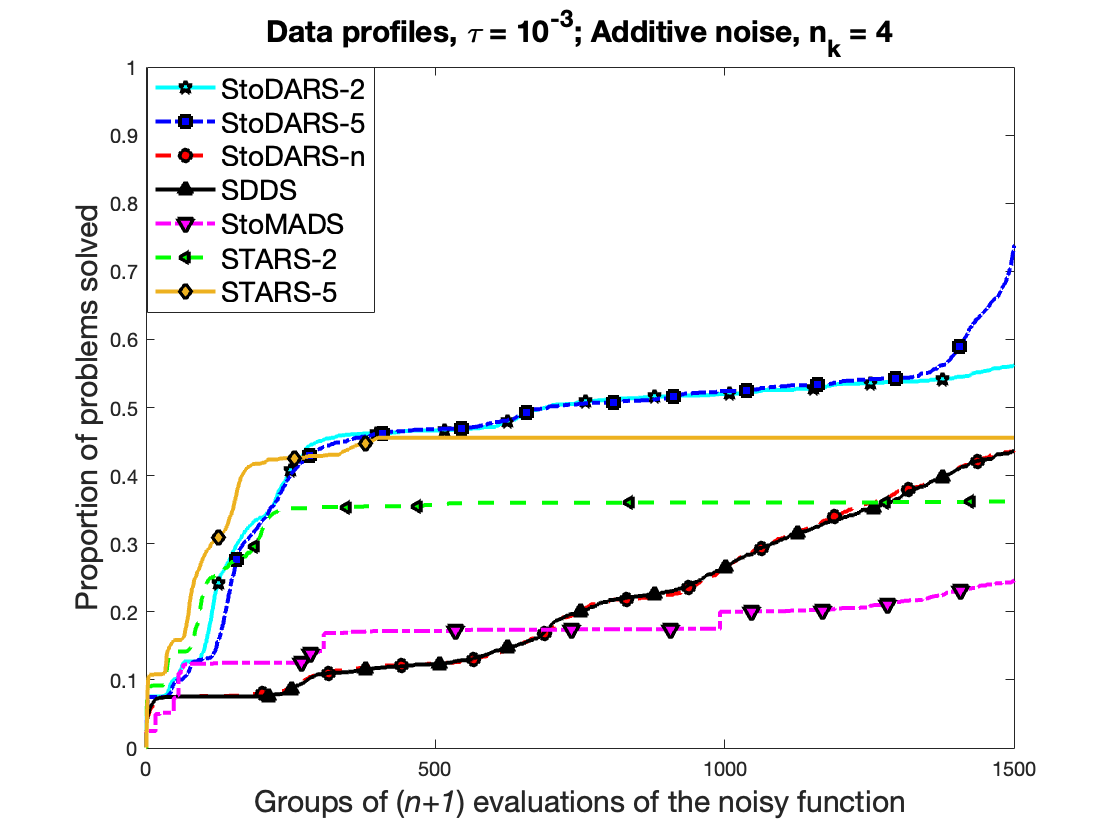}
\caption{\small{Data profiles for convergence tolerances $\tau=10^{-2}$ and $\tau=10^{-3}$, on $1,600$ problem instances for additive noise with standard deviation $\sigma=10^{-3}$, while using $n_k=4$ noisy function evaluations with reuse of available samples from previous iterations for the computation of estimates.}}
\label{dataproft3s3ev4ty67}
\end{figure}
\begin{figure}
\centering
\includegraphics[scale=0.22]{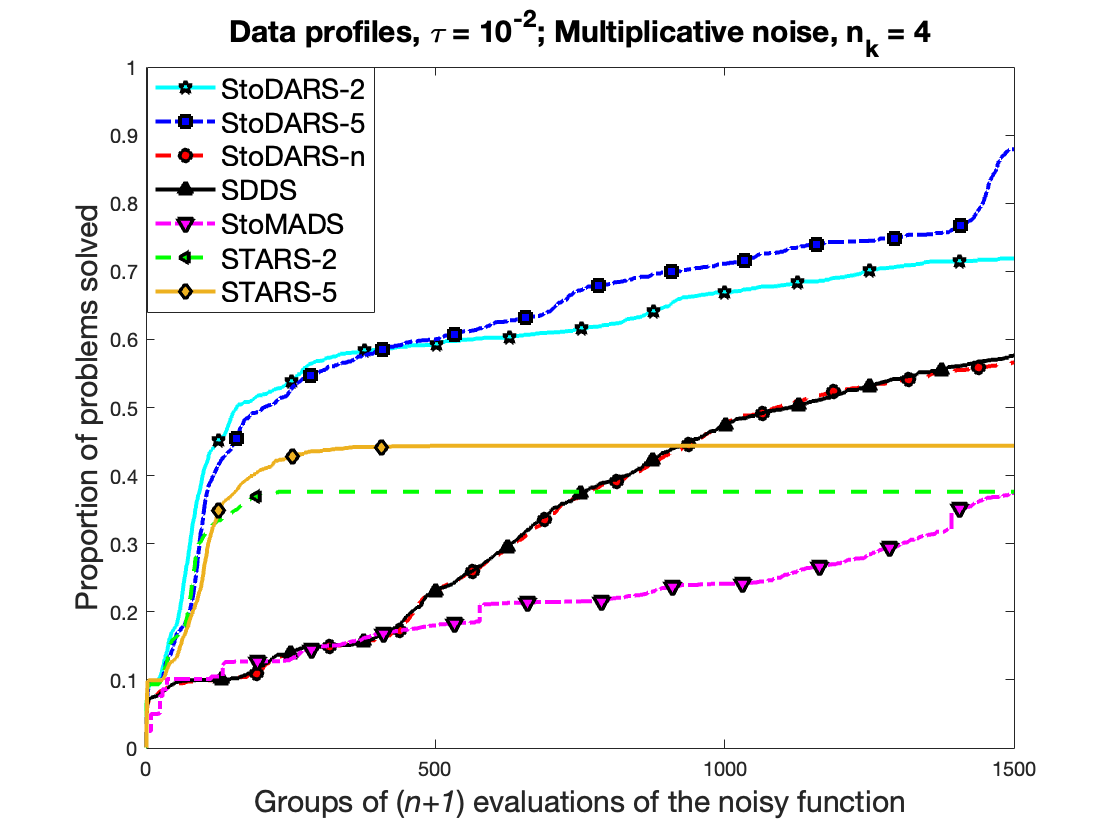}
\includegraphics[scale=0.22]{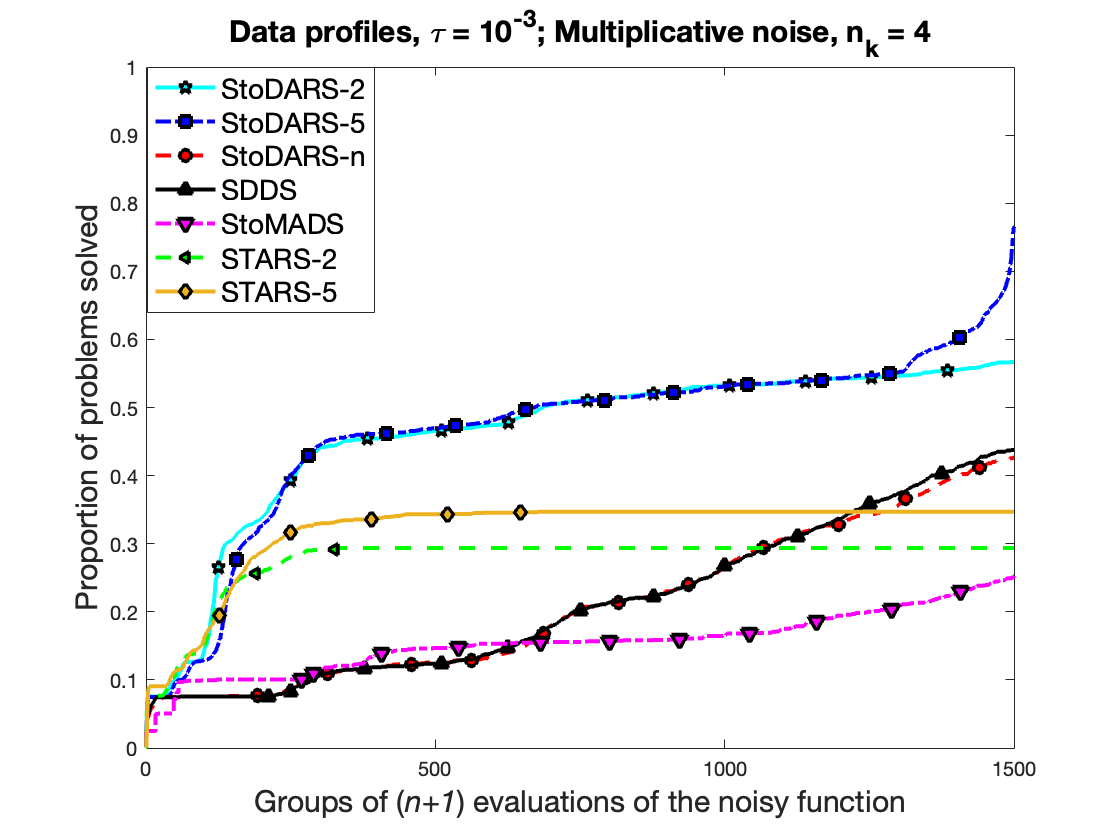}
\caption{\small{Data profiles for convergence tolerances $\tau=10^{-2}$ and $\tau=10^{-3}$, on $1,600$ problem instances for multiplicative noise with standard deviation $\sigma=10^{-3}$, while using $n_k=4$ noisy function evaluations with reuse of available samples from previous iterations for the computation of estimates.}}
\label{dataproft3s3ev4ty45}
\end{figure}
\onecolumn

\clearpage

\twocolumn 
\begin{figure}
\centering
\includegraphics[scale=0.22]{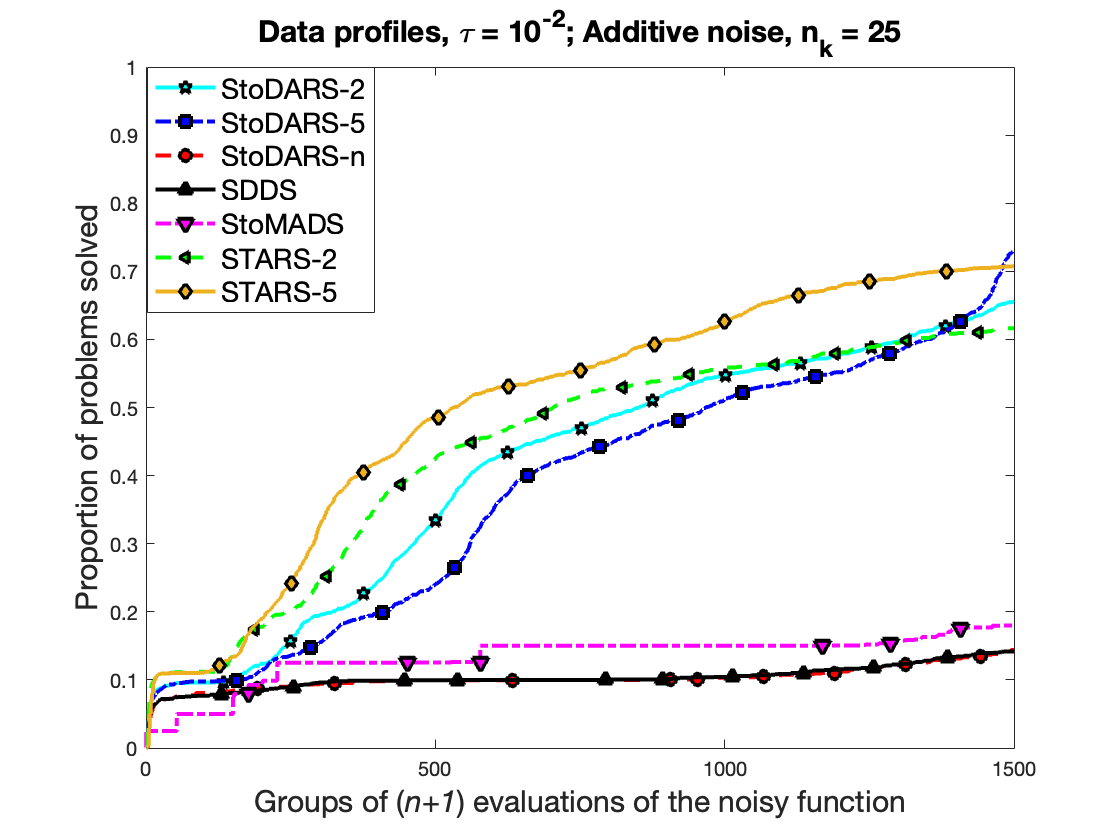}
\includegraphics[scale=0.22]{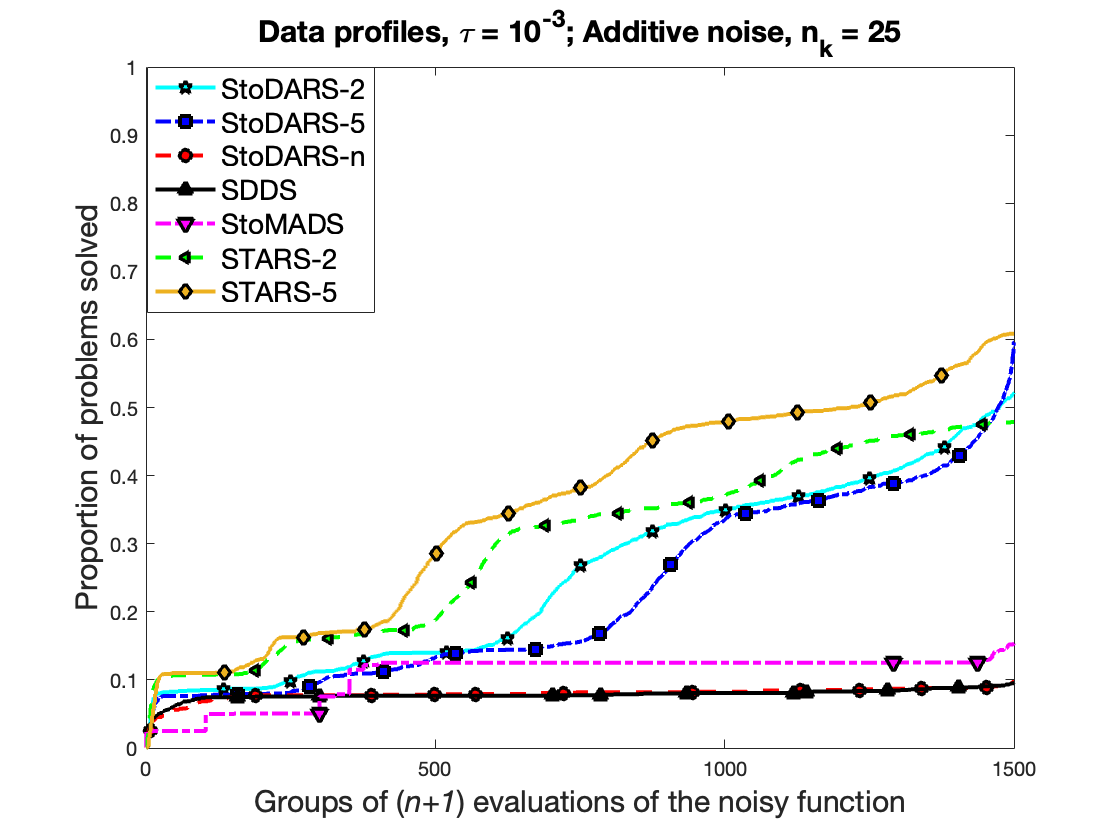}
\caption{\small{Data profiles for convergence tolerances $\tau=10^{-2}$ and $\tau=10^{-3}$, on $1,600$ problem instances for additive noise with standard deviation $\sigma=10^{-3}$, while using $n_k=25$ noisy function evaluations with reuse of available samples from previous iterations for the computation of estimates.}}
\label{dataproft3s3ev25ty67}
\end{figure}
\begin{figure}
\centering
\includegraphics[scale=0.22]{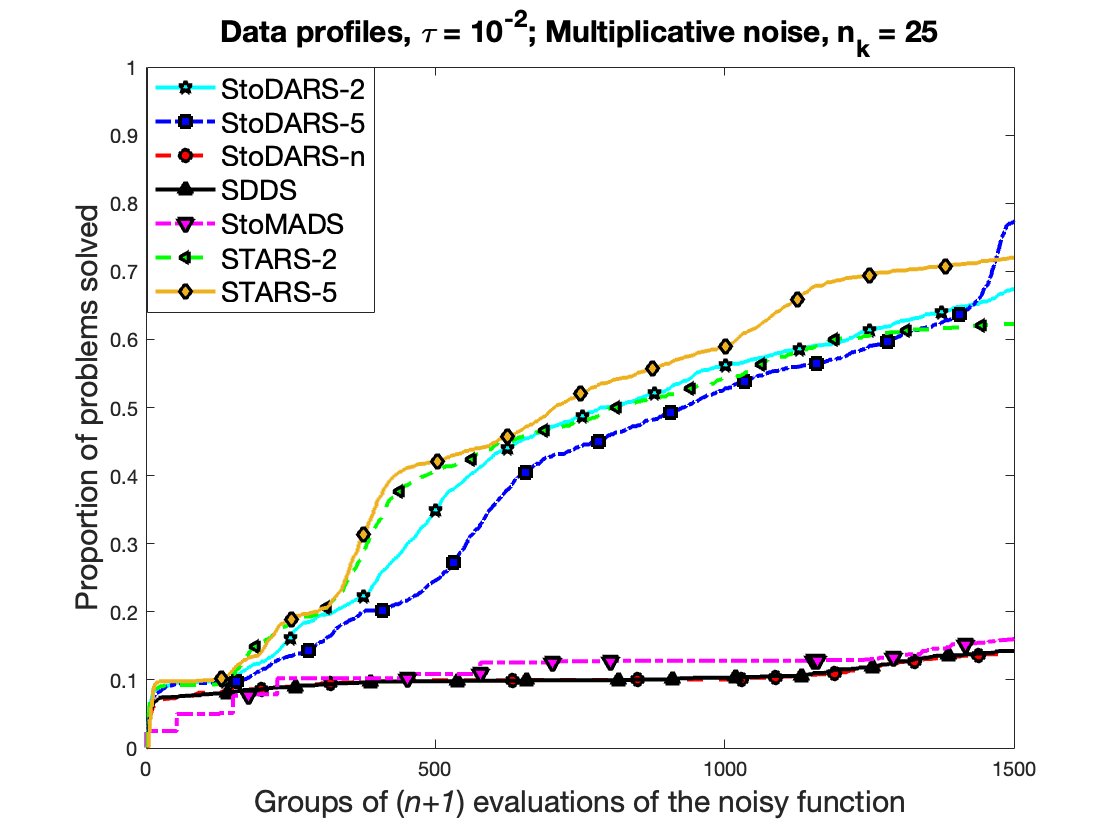}
\includegraphics[scale=0.22]{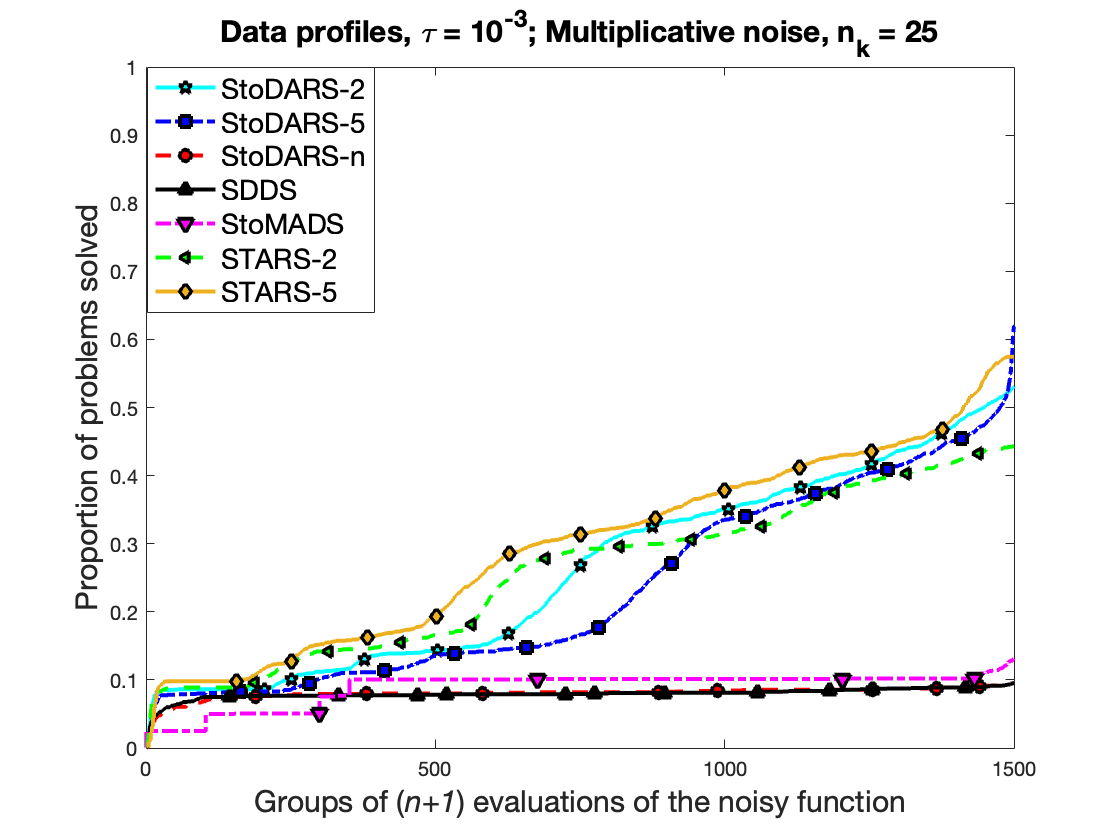}
\caption{\small{Data profiles for convergence tolerances $\tau=10^{-2}$ and $\tau=10^{-3}$, on $1,600$ problem instances for multiplicative noise with standard deviation $\sigma=10^{-3}$, while using $n_k=25$ noisy function evaluations with reuse of available samples from previous iterations for the computation of estimates.}}
\label{dataproft3s3ev25ty45}
\end{figure}

\onecolumn

\subsection*{Acknowledgments}
The first author thanks Professor Coralia Cartis for the discussions that motivated this work. This material was based upon work supported by the {\bl 
U.S.\ Department of Energy, Office of Science, Office of Advanced Scientific Computing Research applied mathematics and SciDAC (CAMPA) programs under Contract Nos.~DE-AC02-05CH11231 and DE-AC02-06CH11357.}

\bibliography{dzahini-bibliography}

\begin{thebibliography}{52}
\providecommand{\natexlab}[1]{#1}
\providecommand{\url}[1]{\texttt{#1}}
\expandafter\ifx\csname urlstyle\endcsname\relax
  \providecommand{\doi}[1]{doi: #1}\else
  \providecommand{\doi}{doi: \begingroup \urlstyle{rm}\Url}\fi

\bibitem[Abramson and Audet(2006)]{AbAu06}
M.~Abramson and C.~Audet.
\newblock Convergence of mesh adaptive direct search to second-order stationary
  points.
\newblock \emph{SIAM Journal on Optimization}, 17\penalty0 (2):\penalty0
  606--619, 2006.
\newblock \doi{10.1137/050638382}.

\bibitem[Abramson et~al.(2009)Abramson, Audet, {Dennis, Jr.}, and
  {Le~Digabel}]{AbAuDeLe09}
M.~Abramson, C.~Audet, J.~{Dennis, Jr.}, and S.~{Le~Digabel}.
\newblock {OrthoMADS}: A deterministic {MADS} instance with orthogonal
  directions.
\newblock \emph{SIAM Journal on Optimization}, 20\penalty0 (2):\penalty0
  948--966, 2009.
\newblock \doi{10.1137/080716980}.

\bibitem[Achlioptas(2003)]{Database2003Ach}
D.~Achlioptas.
\newblock Database-friendly random projections: {Johnson-Lindenstrauss} with
  binary coins.
\newblock \emph{Journal of Computer and System Sciences}, 66\penalty0
  (4):\penalty0 671--687, 2003.
\newblock \doi{10.1016/S0022-0000(03)00025-4}.

\bibitem[Audet and {Dennis, Jr.}(2003)]{AuDe03a}
C.~Audet and J.~{Dennis, Jr.}
\newblock Analysis of generalized pattern searches.
\newblock \emph{SIAM Journal on Optimization}, 13\penalty0 (3):\penalty0
  889--903, 2003.
\newblock \doi{10.1137/S1052623400378742}.

\bibitem[Audet and {Dennis, Jr.}(2006)]{AuDe2006}
C.~Audet and J.~{Dennis, Jr.}
\newblock Mesh adaptive direct search algorithms for constrained optimization.
\newblock \emph{SIAM Journal on Optimization}, 17\penalty0 (1):\penalty0
  188--217, 2006.
\newblock \doi{10.1137/040603371}.

\bibitem[Audet and Hare(2017)]{AuHa2017}
C.~Audet and W.~Hare.
\newblock \emph{{Derivative-Free and Blackbox Optimization}}.
\newblock Springer Series in Operations Research and Financial Engineering.
  Springer, Cham, Switzerland, 2017.
\newblock \doi{10.1007/978-3-319-68913-5}.

\bibitem[Audet et~al.(2021)Audet, Dzahini, Kokkolaras, and
  {Le~Digabel}]{audet2019stomads}
C.~Audet, K.~J. Dzahini, M.~Kokkolaras, and S.~{Le~Digabel}.
\newblock Stochastic mesh adaptive direct search for blackbox optimization
  using probabilistic estimates.
\newblock \emph{Computational Optimization and Applications}, 79\penalty0
  (1):\penalty0 1--34, 2021.
\newblock \doi{10.1007/s10589-020-00249-0}.

\bibitem[Bhattacharya and Waymire(2007)]{bhattacharya2007basic}
R.~Bhattacharya and E.~Waymire.
\newblock \emph{{A Basic Course in Probability Theory}}, volume~69.
\newblock Springer, 2007.
\newblock \doi{10.1007/978-3-319-47974-3}.

\bibitem[Billingsley(1995)]{billingsley1995ThirdEdition}
P.~Billingsley.
\newblock \emph{{Probability and Measure}}.
\newblock Wiley, New York, USA, third edition, 1995.

\bibitem[Blanchet et~al.(2019)Blanchet, Cartis, Menickelly, and
  Scheinberg]{blanchet2016convergence}
J.~Blanchet, C.~Cartis, M.~Menickelly, and K.~Scheinberg.
\newblock Convergence rate analysis of a stochastic trust region method via
  submartingales.
\newblock \emph{INFORMS Journal on Optimization}, 1\penalty0 (2):\penalty0
  92--119, 2019.
\newblock \doi{10.1287/ijoo.2019.0016}.

\bibitem[Bottou et~al.(2018)Bottou, Curtis, and
  Nocedal]{bottou2018optimization}
L.~Bottou, F.~Curtis, and J.~Nocedal.
\newblock Optimization methods for large-scale machine learning.
\newblock \emph{SIAM Review}, 60\penalty0 (2):\penalty0 223--311, 2018.
\newblock \doi{10.1137/16M1080173}.

\bibitem[B{\H{u}}rmen et~al.(2015)B{\H{u}}rmen, Olen{\v{s}}ek, and
  Tuma]{BhQPMADS2015}
{\'A}.~B{\H{u}}rmen, J.~Olen{\v{s}}ek, and T.~Tuma.
\newblock Mesh adaptive direct search with second directional derivative-based
  {Hessian} update.
\newblock \emph{Computational Optimization and Applications}, 62:\penalty0
  693--715, 2015.
\newblock \doi{10.1007/s10589-015-9753-5}.

\bibitem[Cartis and Roberts(2022)]{CRsubspace2021}
C.~Cartis and L.~Roberts.
\newblock Scalable subspace methods for derivative-free nonlinear least-squares
  optimization.
\newblock \emph{Mathematical Programming}, pages 1--64, 2022.
\newblock \doi{10.1007/s10107-022-01836-1}.

\bibitem[Cartis et~al.(2022{\natexlab{a}})Cartis, Fowkes, and
  Shao]{carfowsha2022Rand}
C.~Cartis, J.~Fowkes, and Z.~Shao.
\newblock Randomised subspace methods for non-convex optimization, with
  applications to nonlinear least-squares.
\newblock \emph{arXiv}, \penalty0 (2211.09873), 2022{\natexlab{a}}.
\newblock URL \url{https://arxiv.org/abs/2211.09873}.

\bibitem[Cartis et~al.(2022{\natexlab{b}})Cartis, Fowkes, and
  Shao]{carfowsha2022aRandomised}
C.~Cartis, J.~Fowkes, and Z.~Shao.
\newblock A randomised subspace {Gauss--Newton} method for nonlinear
  least-squares.
\newblock \emph{arXiv}, \penalty0 (2211.05727), 2022{\natexlab{b}}.
\newblock URL \url{https://arxiv.org/abs/2211.05727}.

\bibitem[Chang(2012)]{Ch2012}
K.~Chang.
\newblock Stochastic {Nelder-Mead} simplex method - a new globally convergent
  direct search method for simulation optimization.
\newblock \emph{European Journal of Operational Research}, 220\penalty0
  (3):\penalty0 684--694, 2012.
\newblock \doi{10.1016/j.ejor.2012.02.028}.

\bibitem[Chen et~al.(2018)Chen, Menickelly, and Scheinberg]{chen2018stochastic}
R.~Chen, M.~Menickelly, and K.~Scheinberg.
\newblock Stochastic optimization using a trust-region method and random
  models.
\newblock \emph{Mathematical Programming}, 169\penalty0 (2):\penalty0 447--487,
  2018.
\newblock \doi{10.1007/s10107-017-1141-8}.

\bibitem[Clarke(1983)]{Clar83a}
F.~Clarke.
\newblock \emph{Optimization and Nonsmooth Analysis}.
\newblock John Wiley and Sons, New York, 1983.
\newblock URL \url{http://www.ec-securehost.com/SIAM/CL05.html}.
\newblock Reissued in 1990 by SIAM Publications, Philadelphia.

\bibitem[Cominetti and Correa(1990)]{CoCo90a}
R.~Cominetti and R.~Correa.
\newblock A generalized second-order derivative in nonsmooth optimization.
\newblock \emph{SIAM Journal on Control and Optimization}, 28\penalty0
  (4):\penalty0 789--809, 1990.
\newblock \doi{10.1137/0328045}.

\bibitem[Conn et~al.(2009)Conn, Scheinberg, and Vicente]{CoScVibook}
A.~R. Conn, K.~Scheinberg, and L.~N. Vicente.
\newblock \emph{{Introduction to Derivative-Free Optimization}}.
\newblock MOS-SIAM Series on Optimization. SIAM, Philadelphia, 2009.
\newblock ISBN 978-0-898716-68-9.
\newblock \doi{10.1137/1.9780898718768}.

\bibitem[Dzahini(2022)]{dzahini2020expected}
K.~J. Dzahini.
\newblock Expected complexity analysis of stochastic direct-search.
\newblock \emph{Computational Optimization and Applications}, 81\penalty0
  (1):\penalty0 179--200, 2022.
\newblock \doi{10.1007/s10589-021-00329-9}.

\bibitem[Dzahini and Wild(2024)]{DzaWildSub2022}
K.~J. Dzahini and S.~M. Wild.
\newblock Stochastic trust-region algorithm in random subspaces with
  convergence and expected complexity analyses.
\newblock \emph{SIAM Journal on Optimization}, 34\penalty0 (3):\penalty0
  2671--2699, 2024.
\newblock \doi{10.1137/22M1524072}.

\bibitem[Dzahini and Wild(2025)]{DzaWildHashing2022}
K.~J. Dzahini and S.~M. Wild.
\newblock A class of sparse {Johnson--Lindenstrauss} transforms and analysis of
  their extreme singular values.
\newblock \emph{SIAM Journal on Matrix Analysis and Applications}, 46\penalty0
  (1):\penalty0 416--438, 2025.
\newblock \doi{10.1137/23M1605661}.

\bibitem[Dzahini et~al.(2022)Dzahini, Kokkolaras, and
  {Le~Digabel}]{dzahini2020constrained}
K.~J. Dzahini, M.~Kokkolaras, and S.~{Le~Digabel}.
\newblock Constrained stochastic blackbox optimization using a progressive
  barrier and probabilistic estimates.
\newblock \emph{Mathematical Programming}, 2022.
\newblock \doi{10.1007/s10107-022-01787-7}.

\bibitem[Dzahini et~al.(2025)Dzahini, Rinaldi, Royer, and
  Zeffiro]{DzaRinRoyZef2024Survey}
K.~J. Dzahini, F.~Rinaldi, C.~W. Royer, and D.~Zeffiro.
\newblock Direct-search methods in the year 2025: {T}heoretical guarantees and
  algorithmic paradigms.
\newblock \emph{EURO Journal on Computational Optimization}, 13:\penalty0
  100110, 2025.
\newblock \doi{10.1016/j.ejco.2025.100110}.

\bibitem[Edelman and Rao(2005)]{EdelmanRao2005Haar}
A.~Edelman and N.~R. Rao.
\newblock Random matrix theory.
\newblock \emph{Acta Numerica}, 14:\penalty0 233--297, 2005.
\newblock \doi{10.1017/S0962492904000236}.

\bibitem[Franchi(2013)]{franchi2013}
J.~Franchi.
\newblock \emph{Processus Al\'eatoires \`a Temps Discret: Cours, Exercices et
  Probl\`emes Corrig\'es}.
\newblock Ellipse, 2013.

\bibitem[Gratton et~al.(2015)Gratton, Royer, Vicente, and Zhang]{GrRoViZh2015}
S.~Gratton, C.~W. Royer, L.~N. Vicente, and Z.~Zhang.
\newblock Direct search based on probabilistic descent.
\newblock \emph{SIAM Journal on Optimization}, 25\penalty0 (3):\penalty0
  1515--1541, 2015.
\newblock \doi{10.1137/140961602}.

\bibitem[Gratton et~al.(2019)Gratton, Royer, Vicente, and Zhang]{GraRoViZh2019}
S.~Gratton, C.~W. Royer, L.~N. Vicente, and Z.~Zhang.
\newblock Direct search based on probabilistic feasible descent for bound and
  linearly constrained problems.
\newblock \emph{Computational Optimization and Applications}, 72\penalty0
  (3):\penalty0 525--559, 2019.
\newblock \doi{10.1007/s10589-019-00062-4}.

\bibitem[Hare et~al.(2025)Hare, Roberts, and Royer]{HarRobRoy25Expected}
W.~Hare, L.~Roberts, and C.~W. Royer.
\newblock Expected decrease for derivative-free algorithms using random
  subspaces.
\newblock \emph{Mathematics of Computation}, 94\penalty0 (351):\penalty0
  277--304, 2025.
\newblock \doi{10.1090/mcom/4011}.

\bibitem[Hiriart-Urruty et~al.(1984)Hiriart-Urruty, Strodiot, and
  Nguyen]{HUStNg84a}
J.-B. Hiriart-Urruty, J.-J. Strodiot, and V.~H. Nguyen.
\newblock Generalized {H}essian matrix and second-order optimality conditions
  for problems with ${C}^{1,1}$ data.
\newblock \emph{Applied Mathematics and Optimization}, 11\penalty0
  (1):\penalty0 43--56, 1984.
\newblock \doi{10.1007/BF01442169}.

\bibitem[Indyk and Motwani(1998)]{IndykMotwani1998}
P.~Indyk and R.~Motwani.
\newblock Approximate nearest neighbors: towards removing the curse of
  dimensionality.
\newblock In \emph{Proceedings of the Thirtieth Annual {ACM} Symposium on
  Theory of Computing}, pages 604--613, 1998.
\newblock \doi{10.1145/276698.276876}.

\bibitem[Jahn(2007)]{Jahn2007}
J.~Jahn.
\newblock \emph{Introduction to the Theory of Nonlinear Optimization}.
\newblock Springer, third edition, 2007.
\newblock ISBN 978-3-540-49378-5.
\newblock URL
  \url{http://www.bibsonomy.org/bibtex/232e94fe35083c3aa63e38f3d517e21e4/dblp}.

\bibitem[Kane and Nelson(2014)]{KaNel2014SparseLidenstrauss}
D.~Kane and J.~Nelson.
\newblock {Sparser Johnson--Lindenstrauss transforms}.
\newblock \emph{Journal of the ACM}, 61\penalty0 (1):\penalty0 1--23, 2014.
\newblock \doi{10.1145/2559902}.

\bibitem[Kolda et~al.(2003)Kolda, Lewis, and Torczon]{KoLeTo03a}
T.~Kolda, R.~Lewis, and V.~Torczon.
\newblock Optimization by direct search: New perspectives on some classical and
  modern methods.
\newblock \emph{SIAM Review}, 45\penalty0 (3):\penalty0 385--482, 2003.
\newblock \doi{10.1137/S003614450242889}.

\bibitem[Kozak et~al.(2021)Kozak, Becker, Doostan, and
  Tenorio]{kozak2021stochastic}
D.~Kozak, S.~Becker, A.~Doostan, and L.~Tenorio.
\newblock A stochastic subspace approach to gradient-free optimization in high
  dimensions.
\newblock \emph{Computational Optimization and Applications}, 79\penalty0
  (2):\penalty0 339--368, 2021.
\newblock \doi{10.1007/s10589-021-00271-w}.

\bibitem[Kozak et~al.(2023)Kozak, Molinari, Rosasco, Tenorio, and
  Villa]{Kozak2023}
D.~Kozak, C.~Molinari, L.~Rosasco, L.~Tenorio, and S.~Villa.
\newblock Zeroth-order optimization with orthogonal random directions.
\newblock \emph{Mathematical Programming}, 199\penalty0 (1–2):\penalty0
  1179--1219, 2023.
\newblock \doi{10.1007/s10107-022-01866-9}.

\bibitem[Larson et~al.(2019)Larson, Menickelly, and Wild]{LaMeWi2019}
J.~Larson, M.~Menickelly, and S.~Wild.
\newblock Derivative-free optimization methods.
\newblock \emph{Acta Numerica}, 28:\penalty0 287--404, 2019.
\newblock \doi{10.1017/S0962492919000060}.

\bibitem[Meckes(2019)]{MeckesElizHaar2019}
E.~S. Meckes.
\newblock \emph{The Random Matrix Theory of the Classical Compact Groups}.
\newblock Cambridge Tracts in Mathematics. Cambridge University Press,
  Cambridge, 2019.
\newblock \doi{10.1017/9781108303453}.

\bibitem[Mezzadri(2007)]{MezzadriHaar2007}
F.~Mezzadri.
\newblock How to generate random matrices from the classical compact groups.
\newblock \emph{Notices of the American Mathematical Society}, 54\penalty0
  (5):\penalty0 592--604, 2007.
\newblock URL \url{https://www.ams.org/notices/200705/fea-mezzadri-web.pdf}.

\bibitem[Mor\'e and Wild(2009)]{MoWi2009}
J.~Mor\'e and S.~Wild.
\newblock Benchmarking derivative-free optimization algorithms.
\newblock \emph{SIAM Journal on Optimization}, 20\penalty0 (1):\penalty0
  172--191, 2009.
\newblock \doi{10.1137/080724083}.

\bibitem[Paquette and Scheinberg(2020)]{paquette2018stochastic}
C.~Paquette and K.~Scheinberg.
\newblock A stochastic line search method with expected complexity analysis.
\newblock \emph{SIAM Journal on Optimization}, 30\penalty0 (1):\penalty0
  349--376, 2020.
\newblock \doi{10.1137/18M1216250}.

\bibitem[Rando et~al.(2023)Rando, Molinari, Rosasco, and
  Villa]{NEURIPS2023_7429f4c1}
M.~Rando, C.~Molinari, L.~Rosasco, and S.~Villa.
\newblock An optimal structured zeroth-order algorithm for non-smooth
  optimization.
\newblock In A.~Oh, T.~Naumann, A.~Globerson, K.~Saenko, M.~Hardt, and
  S.~Levine, editors, \emph{Advances in Neural Information Processing Systems},
  volume~36, pages 36738--36767. Curran Associates, Inc., 2023.
\newblock URL
  \url{https://proceedings.neurips.cc/paper_files/paper/2023/file/7429f4c1b267cf619f28c4d4f1532f99-Paper-Conference.pdf}.

\bibitem[Rinaldi et~al.(2024)Rinaldi, Vicente, and Zeffiro]{rinaldi2022weak}
F.~Rinaldi, L.~N. Vicente, and D.~Zeffiro.
\newblock Stochastic trust-region and direct-search methods: {A} weak tail
  bound condition and reduced sample sizing.
\newblock \emph{SIAM Journal on Optimization}, 34\penalty0 (2):\penalty0
  2067–--2092, 2024.
\newblock \doi{10.1137/22m1543446}.

\bibitem[Roberts and Royer(2023)]{RobRoy2022RedSpace}
L.~Roberts and C.~W. Royer.
\newblock Direct search based on probabilistic descent in reduced spaces.
\newblock \emph{SIAM Journal on Optimization}, 33\penalty0 (4):\penalty0
  3057--3082, 2023.
\newblock \doi{10.1137/22M1488569}.

\bibitem[Shao(2022)]{shao2022Thesis}
Z.~Shao.
\newblock \emph{{On Random Embeddings and their Applications to Optimization}}.
\newblock PhD thesis, University of Oxford, 2022.
\newblock URL \url{https://arxiv.org/abs/2206.03371}.

\bibitem[Shashaani et~al.(2018)Shashaani, Hashemi, and
  Pasupathy]{shashaani2018astro}
S.~Shashaani, F.~Hashemi, and R.~Pasupathy.
\newblock {ASTRO-DF: A class of adaptive sampling trust-region algorithms for
  derivative-free stochastic optimization}.
\newblock \emph{SIAM Journal on Optimization}, 28\penalty0 (4):\penalty0
  3145--3176, 2018.
\newblock \doi{10.1137/15M1042425}.

\bibitem[Stewart(1980)]{Stewart1980Haar}
G.~W. Stewart.
\newblock The efficient generation of random orthogonal matrices with an
  application to condition estimators.
\newblock \emph{SIAM Journal on Numerical Analysis}, 17\penalty0 (3):\penalty0
  403--409, 1980.
\newblock \doi{10.1137/0717034}.

\bibitem[Test problem details()]{testprobs}
Test problem details, 2022.
\newblock URL
  \url{https://github.com/wildsm/randprojectionsdata/blob/main/README.md}.

\bibitem[{Van Dyke} and Asaki(2013)]{VDAs2013}
B.~{Van Dyke} and T.~Asaki.
\newblock Using {QR} decomposition to obtain a new instance of mesh adaptive
  direct search with uniformly distributed polling directions.
\newblock \emph{Journal of Optimization Theory and Applications}, 159\penalty0
  (3):\penalty0 805--821, 2013.
\newblock \doi{10.1007/s10957-013-0356-y}.

\bibitem[Vershynin(2018)]{vershynin2018HDP}
R.~Vershynin.
\newblock \emph{High-Dimensional Probability: An Introduction with Applications
  in Data Science}.
\newblock Cambridge University Press, 2018.
\newblock \doi{10.1017/9781108231596}.

\bibitem[Vicente(2013)]{Vicente2013}
L.~N. Vicente.
\newblock Worst case complexity of direct search.
\newblock \emph{EURO Journal on Computational Optimization}, 1\penalty0
  (1):\penalty0 143--153, 2013.
\newblock \doi{10.1007/s13675-012-0003-7}.

\end{thebibliography}
\bibliographystyle{abbrvnat} 

\clearpage

\appendix
\section{Proof of Theorem~\ref{JLforHaarTheor}}\label{theor22}
\begin{proof}
Without any loss of generality, it is sufficient to prove the result for $\x\in \mathbb{S}^{n-1}$, which is assumed throughout. Let $\bm{e_1}$ be the first standard basis vector of $\rn$. 
For any $\x\in \mathbb{S}^{n-1}$ arbitrary, $\Urandom_{p\times n}\x \equald \Urandom_{p\times n} \bm{e_1}=(\UrandomScal_{11},\UrandomScal_{21},\dots, \UrandomScal_{p1})^{\top}$,
with $(\UrandomScal_{11}, \UrandomScal_{21},\dots, \UrandomScal_{n1})^{\top}\sim \mathcal{U}(\mathbb{S}^{n-1})$, hence $\Urandom_{p\times n}\x \equald \left(\xiRandom_1,\xiRandom_2,\dots, \xiRandom_p\right)^{\top}$, where $\xiRandomVec:=\left(\xiRandom_1,\xiRandom_2,\dots, \xiRandom_n\right)^{\top}\sim \mathcal{U}(\mathbb{S}^{n-1})$. 
Let \scalebox{0.88}{$F(x_1,x_2,\dots,x_n):=\sqrt{\frac{n}{p}}\normii{(x_1,x_2,\dots,x_p)}$}, so that \scalebox{0.88}{$F\left(\xiRandomVec\right)=\sqrt{\frac{n}{p}}\normii{\left(\xiRandom_1,\xiRandom_2,\dots, \xiRandom_p\right)}\equald \sqrt{\frac{n}{p}}\normii{\Urandom_{p\times n}\x}=\normii{\QrandomTranspose\x}$}. 
Then, for all $\x,\bm{y}\in \mathbb{S}^{n-1}$, $\abs{F(\x)-F(\bm{y})}\leq \sqrt{\frac{n}{p}}\normii{\x-\bm{y}}$,
which shows that $F$ is $\sqrt{\frac{n}{p}}$-Lipschitz on $\mathbb{S}^{n-1}$. Thus, it follows from Theorem~\ref{LeviTheorem} that there exists $\eta>0$ such that for all $\varepsilon_Q\in (0,1)$, 
{\footnotesize{
\begin{equation}\label{LeviEq1}
\prob{\abs{F\left(\xiRandomVec\right)-\E{F\left(\xiRandomVec\right)}}\geq \frac{\varepsilon_Q}{2}}\leq 2\exp{\left(-\frac{\eta n (\varepsilon_Q/2)^2}{(\sqrt{n/p})^2}\right)}=2e^{-\frac{\eta p\varepsilon_Q^2}{4}}.
\end{equation}
}}
Moreover, it follows from the integral identity formula that 
{\footnotesize{
\begin{equation}\label{levEq2}
\var{F\left(\xiRandomVec\right)}=\E{\left(F\left(\xiRandomVec\right)-\E{F\left(\xiRandomVec\right)}\right)^2}=\int_0^{+\infty}\prob{\abs{F\left(\xiRandomVec\right)-\E{F\left(\xiRandomVec\right)}}^2\geq t} dt\leq\int_0^{+\infty} 2e^{-\eta p t} dt=\frac{2}{\eta p},
\end{equation}
}}
where the inequality follows from Theorem~\ref{LeviTheorem}. 
From~\eqref{levEq2} and Lemma~\ref{HaarLemma}, we obtain $\sqrt{1-\frac{2}{\eta p}}\leq \E{F\left(\xiRandomVec\right)}\leq 1$.
Assume that $p$ is chosen so that $1-\frac{\varepsilon_Q}{2}\leq\sqrt{1-\frac{2}{\eta p}}$ and $2e^{-\frac{\eta p\varepsilon_Q^2}{4}}\leq \beta_Q$. Then $\abs{F\left(\xiRandomVec\right)-1}<\varepsilon_Q$ since $1-\frac{\varepsilon_Q}{2}\leq \E{F\left(\xiRandomVec\right)}\leq 1$.
Consequently, for all $\x\in \mathbb{S}^{n-1}$, it follows from~\eqref{LeviEq1} that
\[\prob{\abs{\normii{\QrandomTranspose\x}-1}\geq \varepsilon_Q}=\prob{\abs{F\left(\xiRandomVec\right)-1}\geq \varepsilon_Q}\leq\prob{\abs{F\left(\xiRandomVec\right)-\E{F\left(\xiRandomVec\right)}}\geq\frac{\varepsilon_Q}{2}}\leq2e^{-\frac{\eta p\varepsilon_Q^2}{4}}\leq \beta_Q,\]
where the equality is due to the equality in distribution $F\left(\xiRandomVec\right)\equald \normii{\QrandomTranspose\x}$ as explained above. Equivalently, it holds that $\prob{1-\varepsilon_Q\leq \normii{\QrandomTranspose\x}\leq 1+\varepsilon_Q}\geq 1-\beta_Q$ for all $\x\in \mathbb{S}^{n-1}$. It remains to prove that choosing $p\geq \max\accolade{\frac{8}{3\eta}\varepsilon_Q^{-2}, \frac{4}{\eta}\varepsilon_Q^{-2}\log\left(\frac{2}{\beta_Q}\right)}=\frac{4}{\eta}\varepsilon_Q^{-2}\log\left(\frac{2}{\beta_Q}\right)$ ensures $1-\frac{\varepsilon_Q}{2}\leq\sqrt{1-\frac{2}{\eta p}}$ and $2e^{-\frac{\eta p\varepsilon_Q^2}{4}}\leq \beta_Q$ as assumed above, where the latter equality trivially holds for any $\varepsilon_Q,\beta_Q\in (0,1)$. We first observe that $2e^{-\frac{\eta p\varepsilon_Q^2}{4}}\leq \beta_Q\iff p\geq \frac{4}{\eta}\varepsilon_Q^{-2}\log\left(\frac{2}{\beta_Q}\right)$. On the other hand, $\left(1-\frac{\varepsilon_Q}{2}\right)^2\leq 1-\frac{2}{\eta p}$ if $p\geq \frac{2}{\eta\left(\varepsilon_Q-\frac{\varepsilon_Q^2}{4}\right)}$.
Since $\varepsilon_Q\in (0,1)$, then $\varepsilon_Q-\frac{\varepsilon_Q^2}{4}>\frac{3}{4}\varepsilon_Q^2$, which implies that $\frac{8}{3\eta}\varepsilon_Q^{-2}\geq \frac{2}{\eta\left(\varepsilon_Q-\frac{\varepsilon_Q^2}{4}\right)}$. Hence,  choosing $p\geq \frac{8}{3\eta}\varepsilon_Q^{-2}$ ensures $p\geq \frac{2}{\eta\left(\varepsilon_Q-\frac{\varepsilon_Q^2}{4}\right)}$.
\end{proof}

\section{Proof of Theorem~\ref{zerothOrdTheor}}\label{th41}
\begin{proof}
The proof is similar to that of~\cite[Theorem~2]{dzahini2020expected}, using ideas derived in~\cite{blanchet2016convergence,chen2018stochastic,DzaWildSub2022,paquette2018stochastic}. It is presented here for the sake of clarity and self-containedness. The overall goal is to prove~\eqref{PhikDynamics} since taking the expectation on both its sides leads to~\eqref{finiteSeries} as detailed in~\cite[Theorem~3]{dzahini2020expected}, given that $\Phik>0$ for all~$k\in\N$. The proof considers two cases, {\it good estimates} and {\it bad estimates}, each of which is broken into whether the iteration is successful or not. Throughout, 
\[\mathfrak{S}_k:=\accolade{\mbox{The}\ k\mbox{th iteration is successful}}.\]
{\bf Case~1:} {\it (Good estimates, $\ijk=1$).} The overall goal is to prove that almost surely, 
\begin{equation}\label{treizeEq}
\E{\ijk\left(\Phikun-\Phik\right)|\fqdalgebra}\leq -\beta_f(1-\nu)(1-\tau^2)\Dk^2.
\end{equation}
{{Subcase~$1i$}:} {\it (Successful iteration, $\isk=1$).} A decrease occurs in~$f$ according to~\eqref{decreaseEqu}, and $\Dk$ is updated according to~$\Dkun=\min\accolade{\tau^{-1}\Dk,\dmax}$, whence $\ijk\isk\frac{\nu}{\ef}(f(\Xkun)-f(\Xk))\leq -\ijk\isk\nu(\gamma-2)\Dk^2$ and $\ijk\isk(1-\nu)(\Dkun^2-\Dk^2)\leq \ijk\isk(1-\nu)(\tau^{-2}-1)\Dk^2$, leading to 
\begin{equation}\label{dixseptEq}
\ijk\isk\left(\Phikun-\Phik\right)\leq\ijk\isk\left(-\nu(\gamma-2)+(1-\nu)(\tau^{-2}-1)\right)\Dk^2\leq -\ijk\isk(1-\nu)(1-\tau^2)\Dk^2,
\end{equation}
where the last inequality follows from the above choice of~$\nu$ along with the fact that $1-\tau^2<\tau^{-2}-1$. \\
{{Subcase~$1ii$}:} {\it (Unsuccessful iteration, $\iskc=1$).} The parameter~$\Dk$ is decreased according to $\Dkun=\tau\Dk$, and $f(\Xkun)=f(\Xk)$, whence
\begin{equation}\label{dixhuitEq}
\ijk\iskc\left(\Phikun-\Phik\right)=-\ijk\iskc(1-\nu)(1-\tau^2)\Dk^2.
\end{equation}
Equations~\eqref{dixseptEq} and~\eqref{dixhuitEq} lead to
\begin{equation}\label{vingtEq}
\ijk\left(\Phikun-\Phik\right)\leq -\ijk(1-\nu)(1-\tau^2)\Dk^2.
\end{equation}
Taking the conditional expectation with respect to $\fqdalgebra$ on both sides of~\eqref{vingtEq} and using Assumption~\ref{efkfAssump}-{\it (i)} lead to~\eqref{treizeEq}.
{\bf Case~2:} {\it (Bad estimates, $\ijkc=1$).} Because of bad estimates, an iterate leading to an increase in~$f$ and~$\Dk$, and hence in~$\Phik$, can be accepted by Algorithm~\ref{algoStoDARS}. Such an increase is controlled by using Assumption~\ref{efkfAssump}-{\it (ii)}. The overall goal is to show that almost surely,
\begin{equation}\label{vingtEtUnEq}
\E{\ijkc\left(\Phikun-\Phik\right)|\fqdalgebra}\leq 2\nu(1-\beta_f)\Dk^2.
\end{equation}
{{Subcase~$2i$}:} {\it (Successful iteration, $\isk=1$).} Since $\Fuk-\Fok\leq-\gamma\ef\Dk^2$, then 
\[\ijkc\isk\frac{\nu}{\ef}(f(\Xkun)-f(\Xk))\leq\ijkc\isk\nu\left(-\gamma\Dk^2+\frac{1}{\ef}\left(\abs{f(\Xkun)-\Fuk} + \abs{\Fok-f(\Xk)}\right)\right).\]
Recalling that $\Dkun=\min\accolade{\tau^{-1}\Dk,\dmax}$, and by noticing that $-\nu\gamma+(1-\nu)(\tau^{-2}-1)\leq 0$ due to the above choice of $\nu$, it holds that
\begin{equation}\label{vingtcinqEq}
\ijkc\isk\left(\Phikun-\Phik\right)\leq \ijkc\isk\frac{\nu}{\ef}\left(\abs{f(\Xkun)-\Fuk} + \abs{\Fok-f(\Xk)}\right).
\end{equation}
{{Subcase~$2ii$}:} {\it (Unsuccessful iteration, $\iskc=1$).} The upper bound on the change in~$\Phik$ follows from~\eqref{dixhuitEq} by replacing~$\ijk$ with~$\ijkc$. Thus, $\ijkc\iskc\left(\Phikun-\Phik\right)\leq \ijkc\iskc\frac{\nu}{\ef}\left(\abs{f(\Xkun)-\Fuk} + \abs{\Fok-f(\Xk)}\right)$, which, combined with~\eqref{vingtcinqEq}, yields
\begin{equation}\label{vingtseptEq}
\ijkc\left(\Phikun-\Phik\right)\leq \ijkc\frac{\nu}{\ef}\left(\abs{f(\Xkun)-\Fuk} + \abs{\Fok-f(\Xk)}\right).
\end{equation}
Taking the conditional expectation with respect to $\fqdalgebra$ on both sides of~\eqref{vingtseptEq} and applying Lemma~\ref{SDDSlemma1} lead to~\eqref{vingtEtUnEq}. Then, combining~\eqref{treizeEq} and~\eqref{vingtEtUnEq} yields
\[\E{\left(\Phikun-\Phik\right)|\fqdalgebra}\leq (-\beta_f(1-\nu)(1-\tau^2)+2\nu(1-\beta_f))\Dk^2\leq -\varrho\Dk^2, \]
where the last inequality follows from the above choice of $\beta_f$, and the proof is completed.
\end{proof}

\section{Proof of Lemma~\ref{smallLemmaHaar}}\label{lem52}

\begin{proof}
The proof is inspired by that of~\cite[Lemma~2]{dzahini2020expected} using elements derived in~\cite{KoLeTo03a}. Suppose that $\dk\leq \depsilon$, that the event $A_k$ occurs, and that the estimates $\fok$ and $\fuk$ are $\ef$-accurate. Then, assume by way of contradiction that $\fuk-\fok> -\gamma\ef\dk^2$ for all $\ub\in \mathbb{U}_k^n$ of the form $\ub:=\Ukp\di^p$, with $\di^p\in \pollD_k$. Since $A_k$ occurs, then the latter inequality holds in particular for $\ub=\Ukp\hat{\di}^p_k$, with $\hat{\di}^p_k\in\argmax\accolade{-\nabla f(\xk)^{\top}\left(\Qk\di^p\right):\di^p\in\pollD_k}$ satisfying 
\begin{equation}\label{AkRealization}
\alpha_Q\kappa(\pollD_k)\normii{\nabla f(\xk)}\leq -\sqrt{\frac{n}{p}}\nabla f(\xk)^{\top}\left(\Ukp{\hat{\di}^p_k}\right).
\end{equation}
It follows from the $\ef$-accuracy assumption, along with the equality 
\[f(\xk+\dk\ub)-f(\xk)=f(\xk+\dk\ub)-\fuk+(\fuk-\fok)+\fok-f(\xk),\]
that 
\begin{equation}\label{eq39Acc}
f\left(\xk+\dk\Ukp\hat{\di}^p_k\right)-f(\xk)+(\gamma+2)\ef\dk^2\geq 0.
\end{equation}
It follows from the mean value theorem and~\eqref{eq39Acc} that there exists $\mu_k\in[0,1]$ such that 
\begin{equation}\label{eq40MeanVal}
0\leq \dk\nabla f\left(\xk+\mu_k\dk\Ukp\hat{\di}^p_k\right)^{\top}\left(\Ukp{\hat{\di}^p_k}\right)+(\gamma+2)\ef\dk^2.
\end{equation}
Multiplying both sides of~\eqref{eq40MeanVal} by $\frac{1}{\dk}\sqrt{\frac{n}{p}}$ and subtracting $\sqrt{\frac{n}{p}}\nabla f(\xk)^{\top}\left(\Ukp{\hat{\di}^p_k}\right)$ yield
\begin{equation}\label{eq41yield}
-\sqrt{\frac{n}{p}}\nabla f(\xk)^{\top}\left(\Ukp{\hat{\di}^p_k}\right)\leq\sqrt{\frac{n}{p}}\left(\nabla f\left(\xk+\mu_k\dk\Ukp\hat{\di}^p_k\right)- \nabla f(\xk)\right)^{\top}\left(\Ukp{\hat{\di}^p_k}\right)+(\gamma+2)\ef\dk,
\end{equation}
which, combined with~\eqref{AkRealization}, leads to
\begin{equation}\label{eq42Norm}
\begin{split}
\alpha_Q\kappa(\pollD_k)\normii{\nabla f(\xk)}&\leq L_g\sqrt{\frac{n}{p}}\normii{\mu_k\dk\Ukp{\hat{\di}^p_k}}\normii{\Ukp{\hat{\di}^p_k}}+(\gamma+2)\ef\dk \leq \left(L_g\sqrt{\frac{n}{p}}+(\gamma+2)\ef\right)\depsilon,
\end{split}
\end{equation}
where the last inequality follows from the $L_g$-Lipschitz continuity of $\nabla f$, the fact that $\normii{\Ukp{\hat{\di}^p_k}}=1$ and $\dk\leq\depsilon$. It follows from~\eqref{eq42Norm} that 
\[\normii{\nabla f(\xk)}\leq \alpha_Q^{-1}\kappa_{\min}^{-1}\left(L_g\sqrt{\frac{n}{p}}+(\gamma+2)\ef\right)\times \frac{\epsilon}{\xi}< \epsilon,\]
where the last inequality follows from~\eqref{depsEq}, and the proof is completed.
\end{proof}


\section{Proof of Lemma~\ref{limsupWeakTail}}\label{lem61}

\begin{proof}
The proof is inspired by those of~\cite[Proposition~2.10 and Lemma~3.2]{rinaldi2022weak} and, unlike~\cite{rinaldi2022weak}, provides  an explanation of why it suffices to prove that $\forall\vartheta\in\N\setminus\accolade{0}$, the events $\mathscr{F}_{\vartheta}:=\accolade{\underset{k\in L}{\limsup} \frac{f\left(\Xk+\Dk\ukRandom\right)-f(\Xk)}{\Dk}\geq  -\frac{1}{\vartheta}}$ satisfy $\prob{\mathscr{F}_{\vartheta}}=1$. Indeed, in the latter case, since $\accolade{\mathscr{F}_{\vartheta}}_{\vartheta\geq 1}$ is a decreasing sequence, that is, $\mathscr{F}_{\vartheta+1}\subset \mathscr{F}_{\vartheta}$ for all $\vartheta\geq 1$, then the event $\mathscr{F}:=\underset{\vartheta\geq 1}{\bigcap} \mathscr{F}_{\vartheta}=\accolade{\underset{k\in L}{\limsup} \frac{f\left(\Xk+\Dk\ukRandom\right)-f(\Xk)}{\Dk}\geq 0}$ satisfies $1=\underset{\vartheta\to\infty}{\lim}\prob{\mathscr{F}_{\vartheta}}=\prob{\bigcap_{\vartheta\geq 1} \mathscr{F}_{\vartheta}}=\prob{\mathscr{F}}$, where the second equality follows from the continuity of $\pr$ (see, e.g.,~\cite[Proposition~I.1.4-$(vii)$]{franchi2013}).

Before diving into the proof of~\eqref{limsupWeakTresult}, we observe that the existence of an almost sure limit~$\hat{\urandom}$ of $\accoladekinL{\ukRandom}$ such that the almost sure event $\mathscr{E}_{\delta,\ub, {\bbbl \mathbb{U}}}\subseteq \accolade{\hat{\urandom}\in T_{\bcX}^H(\hat{\X})}$ ensures (by definition of the hypertangent cone) that $\Xk(\omega)+\Dk(\omega)\ukRandom(\omega)\in \bcX$ for infinitely many $k\in L(\omega)$ sufficiently large and all $\omega\in \mathscr{E}_{\delta,\ub, {\bbbl \mathbb{U}}}$. In other words, with probability one, $\hat{\urandom}$ is a refining direction for~$\hat{\X}$.

The remainder of the proof distinguishes three separate parts. Part~1 shows that for all $\vartheta\geq 1$, the following weak tail probabilistic inequality holds:
\begin{equation}\label{weakTailBound1}
\prob{\abs{\Fuk-\Fok-\left(f(\Xk+\Dk\ukRandom)-f(\Xk)\right)}\geq \left.\frac{\Dk}{\vartheta}\right\rvert\falgebra}\leq 4\vartheta^2\ef^2(1-\beta_f)\Dk^2\quad\mbox{almost surely.}
\end{equation}
The latter result is used in Part~2 to show that $\prob{\mathscr{F}_{\vartheta}}=1$, leading to~\eqref{limsupWeakTresult} as explained above. Part~3 explains how~\eqref{limsupWeakTresultPrime} follows from the proof of~\eqref{limsupWeakTresult}. \\
{\bf Part~1} {\it (Proving~\eqref{weakTailBound1})}. Consider the event $\mathscr{G}_{k,\vartheta}:=\accolade{\abs{\Fuk-\Fok-\left(f(\Xk+\Dk\ukRandom)-f(\Xk)\right)}\geq \frac{\Dk}{\vartheta}}$. It follows from the conditional Markov inequality that almost surely
\begin{eqnarray*}
\prob{\mathscr{G}_{k,\vartheta}\left\lvert\fqdalgebra\right.}&\leq& \Dk^{-2}\vartheta^2\E{\left.\abs{\Fuk-\Fok-\left(f(\Xk+\Dk\ukRandom)-f(\Xk)\right)}^2\right\rvert\fqdalgebra}\\
&\leq& 2\Dk^{-2}\vartheta^2\left(\E{\left.\abs{\Fuk-f(\Xk+\Dk\ukRandom)}^2\right\rvert\fqdalgebra}+\E{\left.\abs{\Fok-f(\Xk)}^2\right\rvert\fqdalgebra}\right)\\
&\leq&  4\vartheta^2\ef^2(1-\beta_f)\Dk^2.
\end{eqnarray*}
Thus, since $\falgebra\subseteq\fqdalgebra$ and $\Dk$ is $\falgebra$-measurable by construction, it holds that almost surely
\begin{eqnarray*}
\prob{\mathscr{G}_{k,\vartheta}\left\lvert\falgebra\right.}&=&\E{\mathds{1}_{\mathscr{G}_{k,\vartheta}}\left\lvert\falgebra\right.}=\E{\left.\E{\mathds{1}_{\mathscr{G}_{k,\vartheta}}\left\lvert\fqdalgebra\right.}\right\rvert\falgebra}=\E{\left.\prob{\mathscr{G}_{k,\vartheta}\left\lvert\fqdalgebra\right.}\right\rvert\falgebra}\\
&\leq& 4\vartheta^2\ef^2(1-\beta_f)\E{\Dk^2|\falgebra}=4\vartheta^2\ef^2(1-\beta_f)\Dk^2.
\end{eqnarray*}
{\bf Part~2} {\it (Getting~\eqref{limsupWeakTresult} using~\eqref{weakTailBound1})}. Taking the expectation on both sides of~\eqref{weakTailBound1} and using Theorem~\ref{zerothOrdTheor} lead to \[\sum_{k=0}^{\infty}\prob{\mathscr{G}_{k,\vartheta}}\leq 4\vartheta^2\ef^2(1-\beta_f)\sum_{k=0}^{\infty}\E{\Dk^2}=  4\vartheta^2\ef^2(1-\beta_f)\E{\sum_{k=0}^{\infty}\Dk^2}<\infty.\]
Then $\prob{\accolade{\abs{\Fuk-\Fok-\left(f(\Xk+\Dk\ukRandom)-f(\Xk)\right)}\Dk^{-1}\geq \frac{1}{\vartheta}}\ \ \mbox{i.o.}}{\bl =0\ }$ thanks to the Borel--Cantelli's first lemma, 
where ``i.o.'' stands for {\it infinitely often}. This implies that almost surely, for all $k$ sufficiently large, say $k\in L'\subset L$, $\abs{\Fuk-\Fok-\left(f(\Xk+\Dk\ukRandom)-f(\Xk)\right)}\Dk^{-1}\leq \frac{1}{\vartheta}$. In other words, \[\mathscr{G}_{\vartheta}:=\accolade{\underset{k\in L'}{\liminf}\abs{\Fuk-\Fok-\left(f(\Xk+\Dk\ukRandom)-f(\Xk)\right)}\Dk^{-1}\leq \frac{1}{\vartheta}}\ \mbox{satisfies}\ \prob{\mathscr{G}_{\vartheta}}=1,\]
which implies that the event $\mathscr{H}_{\vartheta}:=\mathscr{G}_{\vartheta}\cap\accolade{\lim_{k\in L'}\Dk=0}$ is almost sure. Then, passing to the $\limsup$ in the inequality
\[\frac{f(\xk+\dk\uk)-f(\xk)}{\dk}\geq \frac{\fuk-\fok-\abs{\fuk-\fok-(f(\xk+\dk\uk)-f(\xk))}}{\dk}\]
and using the inequality $\limsup(a_k+b_k)\geq \liminf a_k+\limsup b_k$ lead to
\begin{equation*}
\begin{split}
\underset{k\in L'}{\limsup} \frac{f(\Xk+\Dk\ukRandom)-f(\Xk)}{\Dk}&\geq \underset{k\in L'}{\liminf}\frac{\Fuk-\Fok}{\Dk}-\underset{k\in L'}{\liminf} \frac{\abs{\Fuk-\Fok-(f(\Xk+\Dk\ukRandom)-f(\Xk))}}{\Dk}\\
&\geq \underset{k\in L'}{\liminf}\left(-\gamma\ef\Dk\right)-\frac{1}{\vartheta}=-\frac{1}{\vartheta},
\end{split}
\end{equation*}
where the last inequality always holds on $\mathscr{H}_{\vartheta}$, thus implying that the event $\mathscr{F}_{\vartheta}$ is almost sure.\\
{\bf Part~3} {\it (Proving~\eqref{limsupWeakTresultPrime}).} If $L^0$ is an infinite subset of $L$, then $\hat{\urandom}$ is also the almost sure limit of $\accolade{\ukRandom}_{k\in L^0}$, which does not change the fact that $\prob{\hat{\urandom}\in T^H_{\bcX}(\hat{\X})}=1$ or that $\hat{\urandom}$ is a refining direction for $\hat{\X}$ with probability one. One can easily see that all the reasoning in Parts~1 and~2 remains true with $L$ replaced by $L^0$. This demonstrates~\eqref{limsupWeakTresultPrime}, which completes the proof.
\end{proof}



\end{document}